\theoremstyle{plain} 
	\newtheorem{Thm}{Theorem}[section]
	\newtheorem{Cor}[Thm]{Corollary}
	\newtheorem{Lem}[Thm]{Lemma}
	\newtheorem{Prop}[Thm]{Proposition}
\theoremstyle{definition}
        \newtheorem{Def}[Thm]{Definition}
	\newtheorem{Rem}[Thm]{Remark}
\numberwithin{equation}{section}
\newcommand{\R}{\mathbb{R}}
\newcommand{\m}{\mathfrak{m}}
\newcommand{\M}{\mathcal{M}}
\newcommand{\E}{\mathcal{E}}
\newcommand{\inn}[2]{\left\langle#1, #2\right\rangle}
\newcommand{\Ent}{\mathrm{Ent}}
\newcommand{\Ric}{\mathrm{Ric}}
\newcommand{\Hess}{\mathrm{Hess}}
\newcommand{\D}{\mathcal{D}}
\begin{document}

\title{Dilation type inequalities for strongly-convex sets in weighted Riemannian manifolds}
\author{Hiroshi Tsuji\footnote{Department of Mathematics, Osaka University, Osaka 560-0043, Japan (u302167i@ecs.osaka-u.ac.jp),  
	2020 {\it Mathematics Subject Classification: 53C20, 28A75, 26D10, 60E15,}  
	{\it Key words and phrases:} dilation inequality, isoperimetric inequality, Ricci curvature, entropy. }}
\date{}
\maketitle

\begin{abstract}
In this paper, we consider a dilation type inequality on a weighted Riemannian manifold, which is classically known as Borell's lemma in high-dimensional convex geometry. 
We investigate the dilation type inequality as an isoperimetric type inequality by introducing the dilation profile 
and estimate it by the one for the corresponding model space under lower weighted Ricci curvature bounds. 
We also explore functional inequalities derived from the comparison of the dilation profiles under the nonnegative weighted Ricci curvature.
In particular, we show several functional inequalities related to various entropies. 
\end{abstract}

\tableofcontents

\section{Introduction}

Let $\mu$ be an $s$-concave probability measure on $\R^n$ for $s \in [-\infty, \infty]$, which implies that 

\begin{align}\label{s-concavity}
\mu((1-t)A + tB) \geq \left( (1-t) \mu(A)^s + t\mu(B)^s \right)^{1/s}
\end{align}
holds for any compact subsets $A, B \subset \R^n$ with $\mu(A), \mu(B) >0$ and any $t \in [0, 1]$, where $(1-t)A + tB = \{ (1-t)a + tb ~|~ a \in A, b \in B\}$ is the Minkowski sum. 
The right hand side of \eqref{s-concavity} means $\min\{\mu(A), \mu(B)\}$ when $s=-\infty$, $\mu(A)^{1-t}\mu(B)^t$ when $s=0$ (in this case, $\mu$ is called log-concave), and $\max\{\mu(A), \mu(B)\}$ when $s=\infty$. 
Borell noticed in \cite{Bor-1} that, since we have  
\[
\frac{2}{t+1}(\R^n \setminus (tK)) + \frac{t-1}{t+1}K \subset \R^n \setminus K
\]
for any centrally symmetric convex subset $K \subset \R^n$ and $t \geq 1$, it follows from \eqref{s-concavity} that  
\begin{align}\label{dilation}
\left( \frac{2}{t+1} \mu(\R^n \setminus (tK))^s + \frac{t-1}{t+1}\mu(K)^s \right)^{1/s} \leq \mu(\R^n \setminus K)
\end{align}
when $\mu(K) > 0$ and $\mu(tK) < 1$. 
This inequality is called the dilation inequality or mentioned as Borell's lemma, and applied to high-dimensional convex geometry (for instance, see \cite{Bor-1}, \cite{L}, \cite{G}, \cite{BGVV}). 
However, the inequality \eqref{dilation} is not optimal for a convex subset $K$ with $\mu(K) \leq 1/2$.
Indeed, for instance, when $\mu$ is log-concave, the inequality \eqref{dilation} is equivalent to the form
\begin{align}\label{Borell's lemma}
1-\mu(tK) \leq \left( \frac{1-\mu(K)}{\mu(K)} \right)^{(t+1)/2} \mu(K),
\end{align}
and the right hand side above goes to $0$ as $t\to \infty$ if and only if $\mu(K) > 1/2$.
Lov\'{a}sz and Simonovits gave an optimal dilation inequality for log-concave probability measures and centrally symmetric subsets \cite[Theorem 2.8]{LS}, and later Gu\'{e}don \cite{G} proved by the localization method that  
\[
\left( \frac{2}{t+1} \mu(\R^n \setminus (tK))^s + \frac{t-1}{t+1} \right)^{1/s} \leq \mu(\R^n \setminus K)
\] 
for any $s$-concave probability measure $\mu$ with $0\leq s \leq 1/n$, centrally symmetric convex subset $K \subset \R^n$ and $t \geq 1$ (with $\mu(tK) <1$ when $s>0$). 

Moreover, the above dilation inequality was generalized for any Borel subset in $\R^n$ by Nazarov, Sodin and Vol'berg \cite{NSV}, Bobkov \cite{B1}, \cite{B2}, Bobkov and Nazarov \cite{BN}, and Fradelizi \cite{F} as follows. 
Given a Borel subset $A \subset \R^n$ and $t\geq 1$, we define $A^t \subset \R^n$ as 
\begin{align}\label{aa}
A^t := A \cup \left\{ x \in \R^n ~\Bigg{|}~ \text{there exists some interval } I \subset \R^n \text{ such that } x \in I \text{ and } |I \cap A| > \frac{2}{t+1} |I| \right\},
\end{align}
where $|\cdot|$ means the $1$-dimensional Lebesgue measure. 
We may assume that  $x$ is an endpoint of $I$ in the definition of \eqref{aa}.
Note that $A^t$ is a Borel set and $A^1=A$. In addition, 
when $A$ is an open convex subset in $\R^n$, Fradelizi \cite[Fact 1]{F} showed that for any $t \geq 1$, 
\[
A^t =A + \frac{t-1}{2}(A-A).
\]
Therefore, when $A$ is symmetric centered at $a \in \R^n$, then $A^t = t(A-a) + a$. 
In particular, when $a=0$, $A^t$ coincides with $tA$, and hence we may consider the set defined by \eqref{aa} as a generalization 
of the dilation for centrally symmetric convex subsets. 
For other detailed properties of the dilation defined as \eqref{aa}, see \cite{F}. 
The following inequality is the dilation inequality on the dilation set defined by \eqref{aa}: 
given an $s$-concave probability measure $\mu$ on $\R^n$ with $s \leq 1/n$,  it holds that 
\begin{align}\label{a}
\left( \frac{2}{t+1}\mu(\R^n \setminus A^t)^s + \frac{t-1}{t+1} \right)^{1/s} \leq \mu(\R^n \setminus A)
\end{align}
for any Borel subset $A \subset \R^n$ and $t\geq 1$ (with $\mu(A^t) <1$ when $s>0$).
Note that the inequality \eqref{a} is sharp. Indeed, when $\mu_s$ is the probability measure on $\R$ whose density with respect to the 1-dimensional Lebesgue measure is 
\begin{align}\label{aaaaa}
\rho_s(x) := (1-sx)_+^{(1-s)/s} \mathbf{1}_{[0, \infty)}(x),
\end{align}
where $(\cdot)_+ := \max\{ \cdot, 0\}$, then $\mu_s$ is $s$-concave and equality holds in \eqref{a} for any interval $[0, b] \subset \R$ with $b >0$. 

We comment on the methods of the preceding studies. 
Bobkov \cite{B1}, \cite{B2} showed a weak type of \eqref{a} using triangular maps in mass transport theory, and Bobkov and Nazarov \cite{BN} and Fradelizi \cite{F} used the localization method to prove \eqref{a}. 
Recently, this localization method was extended to weighted Riemannian manifolds by Klartag \cite{K} through optimal transport theory (more precisely, this extension corresponds to the localization by Lov\'{a}sz and Simonovits \cite{LS} which is used in \cite{BN}, 
however, Fradelizi used the ``geometric'' localization method (see \cite{FG1}, \cite{FG2} for more information) in which we need to use the Krein-Milman theorem). 
Since the characterization of densities of $s$-concave probability measures on $\R^n$ by Borell \cite{Bor-1} implies that the $s$-concavity of measures is characterized by non-negativity of the weighted Ricci curvature, 
the inequality \eqref{a} is also established on weighted Riemannian manifolds with nonnegative weighted Ricci curvature as Klartag mentioned in \cite[p.65]{K} as follows. 

Let $(\M, g)$ be an $n$-dimensional Riemannian manifold.
For any Borel subset $A \subset \M$ and $\varepsilon \in [0, 1)$, we define the {\it $\varepsilon$-dilation set} $A_{\varepsilon}$ of $A$ on $\M$ by 
\begin{align}\label{dilation2}
A_{\varepsilon} := A \cup \left\{ x \in \M | \text{there exists a minimizing geodesic } \gamma: [0, 1] \to \M \text{ with } \gamma(0) = x \text{ and } |\mathrm{\gamma} \cap A| > 1-\varepsilon \right\}, 
\end{align}
where $|\mathrm{\gamma} \cap A|$ means the 1-dimensional Lebesgue measure of the set $\{ t \in [0, 1]~|~ \gamma(t) \in A\}$. 
When $(\M, g) =(\R^n, \|\cdot\|_2^2)$ where $\|\cdot\|_2^2$ is the standard Euclidean norm, letting $t:=(1+\varepsilon)/(1-\varepsilon)$, we see that $A^t=A_{\varepsilon}$ for any Borel subset $A \subset \R^n$. 

\begin{Thm}[Klartag \cite{K}]\label{Klartag}
Let $(\M, g, \m)$ be a geodesically-convex $n$-dimensional Riemannian manifold with a weighted measure $\m$ satisfying $\m(\M)=1$.
If $(\M, g, \m)$ satisfies $\Ric_N \geq 0$ for some $N \in (-\infty, 0)\cup[n ,\infty]$, then it holds that for any $\varepsilon \in [0 ,1)$, whenever $\m(A_{\varepsilon})<1$, 
\begin{align}\label{aaa}
1- \m(A) \geq \left\{ (1-\varepsilon) \m(\M \setminus A_{\varepsilon})^{1/N} + \varepsilon \right\}^N.
\end{align}
When $N=\infty$, the right hand side  of \eqref{aaa} is interpreted as $\m(\M \setminus A_{\varepsilon})^{1-\varepsilon}$.
\end{Thm}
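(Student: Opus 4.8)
The plan is to derive \eqref{aaa} from its one-dimensional case through Klartag's needle (localization) decomposition, the crucial point being to localize against a particular function so that the quantity $\m(\M\setminus A_\varepsilon)$ is reproduced \emph{on every needle}. Set $c:=\m(\M\setminus A_\varepsilon)$; we may assume $0<c<1$, since $c=0$ is excluded by the hypothesis $\m(A_\varepsilon)<1$ and $c=1$ forces $\m(A)=0$, in which case \eqref{aaa} reads $1\ge1$. Also $A_\varepsilon$ is measurable, as in the planar case treated by Fradelizi. Now introduce the bounded function
\[
f:=\mathbf{1}_{\M\setminus A_\varepsilon}-c,\qquad \int_{\M}f\,d\m=0 .
\]
Applying to $f$ the localization theorem of Klartag \cite{K} --- after the routine approximation that removes the first-moment integrability assumption required there --- produces a probability space $(\Theta,\nu)$ and a family $\{(\gamma_\theta,\m_\theta)\}_{\theta\in\Theta}$ with: each $\gamma_\theta$ a minimizing geodesic; each $\m_\theta$ a probability measure on $\gamma_\theta$ that is $CD(0,N)$ along $\gamma_\theta$; the disintegration $\m=\int_\Theta\m_\theta\,d\nu(\theta)$; and $\int f\,d\m_\theta=0$ for $\nu$-a.e.\ $\theta$. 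The last property says exactly
\[
\m_\theta(\gamma_\theta\setminus A_\varepsilon)=c\qquad\text{for }\nu\text{-a.e. }\theta .
\]

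The second step is the elementary observation that dilation passes to needles. Parametrising $\gamma_\theta$ by an interval $I_\theta\subset\R$ and writing $A_\theta:=\gamma_\theta^{-1}(A)\subseteq I_\theta$, let $(A_\theta)_\varepsilon\subseteq I_\theta$ be the $\varepsilon$-dilation of $A_\theta$ within $I_\theta$ (formed from sub-intervals having the given point as an endpoint). Because a sub-segment of a minimizing geodesic is again minimizing, any sub-interval of $I_\theta$ witnessing membership in $(A_\theta)_\varepsilon$ becomes, after affine reparametrisation onto $[0,1]$, a minimizing geodesic in $\M$ witnessing membership in $A_\varepsilon$; hence $(A_\theta)_\varepsilon\subseteq\gamma_\theta^{-1}(A_\varepsilon)$. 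With $A\subseteq A_\varepsilon$ and the preceding display this yields
\[
\m_\theta\bigl(I_\theta\setminus(A_\theta)_\varepsilon\bigr)\ \ge\ \m_\theta\bigl(I_\theta\setminus\gamma_\theta^{-1}(A_\varepsilon)\bigr)\ =\ c .
\]

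The analytic heart is the one-dimensional dilation inequality: for a $CD(0,N)$ probability density $\rho$ on an interval $I\subseteq\R$, writing $\rho(S):=\int_S\rho\,dt$, and any measurable $B\subseteq I$ with $\varepsilon$-dilation $B_\varepsilon\subseteq I$,
\[
\rho(I\setminus B)\ \ge\ \Phi\bigl(\rho(I\setminus B_\varepsilon)\bigr),\qquad \Phi(u):=\bigl\{(1-\varepsilon)u^{1/N}+\varepsilon\bigr\}^{N}
\]
(and $\Phi(u)=u^{1-\varepsilon}$ for $N=\infty$). In dimension one $CD(0,N)$ is the same as $s$-concavity with $s=1/N\le1$, so this is precisely the $n=1$ case of \eqref{a}; I would either quote it from \cite{F} (or \cite{B1}) or reprove it directly, using that $\rho^{1/(N-1)}$ is concave when $N\in[n,\infty)$ and convex when $N<0$, and that $-\log\rho$ is convex when $N=\infty$. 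In the latter route one reduces $B$ by approximation to a finite union of intervals, then by a rearrangement to the extremal configuration --- expected to be an interval abutting the lighter tail of $\rho$ --- and is finally left with a single-variable estimate saturated by the model density $\rho_s(x)=(1-sx)_+^{(1-s)/s}\mathbf{1}_{[0,\infty)}(x)$ on intervals $[0,b]$, as in \eqref{aaaaa}.

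It remains to assemble. Applying the one-dimensional inequality to each $\m_\theta$ with $B=A_\theta$, and then using the monotonicity of $\Phi$ together with $\m_\theta(I_\theta\setminus(A_\theta)_\varepsilon)\ge c$, we get
\[
\m_\theta(\gamma_\theta\setminus A)=\m_\theta(I_\theta\setminus A_\theta)\ \ge\ \Phi\bigl(\m_\theta(I_\theta\setminus(A_\theta)_\varepsilon)\bigr)\ \ge\ \Phi(c)
\]
for $\nu$-a.e.\ $\theta$. Integrating against $\nu$ and using the disintegration,
\[
1-\m(A)=\m(\M\setminus A)=\int_\Theta\m_\theta(\gamma_\theta\setminus A)\,d\nu(\theta)\ \ge\ \int_\Theta\Phi(c)\,d\nu(\theta)=\Phi(c),
\]
which is \eqref{aaa}. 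It is essential that $c$ be the \emph{same} on all needles: $\Phi$ is in fact concave, so applying Jensen's inequality would point the wrong way --- localizing against $\mathbf{1}_{\M\setminus A_\varepsilon}-c$ rather than against $\mathbf{1}_A-\m(A)$ is precisely what removes the need for any convexity of $\Phi$. I expect the main obstacle to be the one-dimensional inequality --- pinning down the extremal $B$ and carrying the reduction through uniformly over the three regimes $N\in[n,\infty)$, $N=\infty$, $N\in(-\infty,0)$ --- the needle decomposition and its $CD(0,N)$ property being available from \cite{K}.
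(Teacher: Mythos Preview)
The paper does not give its own proof of this statement: Theorem~\ref{Klartag} is quoted from Klartag~\cite{K} and used as input. The closest the paper comes is an indirect recovery (Remark~4.2 together with Remark~\ref{rem: rem1}(4)): one first proves the infinitesimal statement $\m^*(A)\ge\D_{0,N,D}(\m(A))$ for intervals/strongly-convex sets via localization against $\mathbf{1}_A-\m(A)$ (Theorem~\ref{MT0}), extends to Borel sets using concavity of $\D_{0,N,\infty}$, and then integrates back up to the finite-$\varepsilon$ inequality through the machinery of Section~\ref{4} and a decreasing rearrangement.

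Your argument is correct and considerably more direct than that route. The decisive difference is the function you feed into the needle decomposition: the paper localizes against $\mathbf{1}_A-\m(A)$, which fixes $\m_\theta(A)$ on every needle but leaves $\m_\theta(\M\setminus A_\varepsilon)$ uncontrolled, whereas you localize against $\mathbf{1}_{\M\setminus A_\varepsilon}-c$, which fixes $\m_\theta(\M\setminus A_\varepsilon)=c$ on every needle. Combined with the needle-compatibility $(A_\theta)_\varepsilon\subset\gamma_\theta^{-1}(A_\varepsilon)$, the monotonicity of $\Phi$, and the known one-dimensional inequality~\eqref{a} from \cite{F,BN}, this gives a uniform lower bound $\m_\theta(\gamma_\theta\setminus A)\ge\Phi(c)$ that integrates directly to~\eqref{aaa}. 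Your approach handles arbitrary Borel $A$ in one pass, with no need for strong convexity, concavity of the profile, rearrangement, or the ODE-type integration of Section~\ref{4}. Two small points: the first-moment hypothesis in Theorem~\ref{localization} is genuinely needed and is not automatic for a bounded $f$ on a non-compact $\M$, so the ``routine approximation'' you invoke deserves at least a sentence (e.g.\ truncate $\M$ to metric balls and pass to the limit); and your remark that localizing against $\mathbf{1}_A-\m(A)$ would fail because of Jensen is slightly overstated --- that choice also works, but via an essential-supremum argument rather than integration.
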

$\Ric_N$ is the weighted Ricci curvature which is defined in section \ref{WRM}.
Note that Theorem \ref{Klartag} recovers the dilation inequality for $s$-concave probability measures in the Euclidean setting for $s \in (-\infty, 1/n]$. 
Indeed, in virtue of the characterization of the $s$-concavity by Borell \cite{Bor}, we see that every $s$-concave probability measure on $\R^n$ for $s\leq 1/n$ satisfies $\Ric_{1/s}\geq 0$ on its support (when $s=0$, we put $1/s:=\infty$).
More generally, the lower curvature bound $\Ric_N \geq K$ on weighted Riemannian manifolds is known to be equivalent to the {\it curvature-dimension condition} in the sense of Lott-Sturm-Villani (see \cite{vRS}, \cite{St}, \cite{Oh1}, \cite{Oh2}). 

The main purpose of this paper is to establish the sharp dilation type inequalities under more general curvature conditions, namely $\Ric_N \geq K$ for some $K \in \R$. 
In our setting, we consider the dilation inequality \eqref{aaa} as an isoperimetric type inequality. Now, we introduce the dilation profile. 
For every $\varepsilon \in [0, 1)$ and $\theta \in [0, 1]$, we define the {\it $\varepsilon$-dilation profile of $(\M, g, \m)$} by 
\[
\mathcal{D}_{(\M, g, \m)}^{\varepsilon}(\theta) := \inf \{ \m(A_{\varepsilon}) ~|~ \text{a Borel subset } A \subset \M \text{ with } \m(A) = \theta \}.
\]
For instance, considering $(\R, |\cdot|, \mu_s)$ with $s \in (-\infty, 1/n]$ where $\mu_s$ is the $s$-concave probability measure defined by \eqref{aaaaa}, 
since it is the extremal of \eqref{a}, we see that
\begin{align}\label{aaaaaa}
\mathcal{D}_{(\R, |\cdot|, \mu_s)}^{\varepsilon}(\theta) = 1- \left( \frac{(1-\theta)^s - \varepsilon}{1-\varepsilon} \right)_+^{1/s},
\end{align}
where we set $0^{\alpha}:=1$ for $\alpha > 0$ by convention.
When $s =0$, the right hand side of \eqref{aaaaaa} is interpreted as $1- (1- \theta)^{1/ (1- \varepsilon)}$.
Note that \eqref{aaa} in Theorem \ref{Klartag} may be represented as $\D_{(\M, g, \m)}^{\varepsilon} \geq \D_{(\R, |\cdot|, \mu_{1/N})}^{\varepsilon}$ on $[0, 1]$ for any $\varepsilon \in [0, 1)$. 

In this paper, in addition to the dilation profile associated with $\varepsilon \in [0, 1)$, we also treat the following dilation profile: for any Borel subset $A \subset \M$, we define the {\it dilation area of $A$} by 
\[
\m^*(A):= \liminf_{\varepsilon \to 0} \frac{\m(A_{\varepsilon}) - \m(A)}{\varepsilon},
\]
and the {\it dilation profile of $(\M, g, \m)$} by 
\[
\D_{(\M, g, \m)}(\theta):= \inf \{ \m^*(A) ~|~ \text{a Borel subset } A\subset \M \text { with } \m(\M)=\theta\}
\]
for any $\theta \in [0,1]$. By this definition, under the same assumptions as in Theorem \ref{Klartag}, \eqref{aaaaaa} implies that 
\begin{align}\label{b}
\D_{(\M, g, \m)}(\theta) \geq \D_{(\R, |\cdot|, \mu_{1/N})}(\theta) = -N\left(1-\theta-(1-\theta)^{1-1/N}\right)
\end{align}
holds for any $\theta \in[0,1]$. Here, when $N=\infty$, the right hand side above is interpreted as $-(1-\theta)\log(1-\theta)$.
Note that the dilation profile differs from the isoperimetric profile. 
In fact, the dilation profile is scale invariant, namely $\D_{(\M, \lambda^2 g, \m)} = \D_{(\M, g, \m)}$ for any $\lambda>0$ since the $\varepsilon$-dilation is also scale invariant, while the isoperimetric profile $\mathcal{I}_{(\M, g, \m)}$ (for instance, see \cite{Mi1}, \cite{Mi2} for the precise definition and overviews) satisfies 
$\mathcal{I}_{(\M, \lambda^2 g, \m)} = \mathcal{I}_{(\M, g, \m)}/\lambda$ for any $\lambda>0$.

In order to describe our results, we introduce the following notations: 
given a nonzero integrable function $f :\R \to \R_+$ and an interval $[a, b]$ with $-\infty<a<b\leq \infty$ and $\int_a^b f(t) ~dt>0$, we define the {\it flat dilation profile} by
\[
\D^{\flat}(f, [a, b])(\theta) := \frac{f(\alpha(\theta))}{\int_a^b f(t) ~dt}(\alpha(\theta)-a)
\] 
for $\theta \in [0, 1]$, where $\alpha(\theta) \in [a, b]$ is given by 
\[
\theta = \frac{\int_a^{\alpha(\theta)} f(t) ~dt}{\int_a^b f(t) ~dt}.
\]
We also denote 
\begin{align*}
\mathfrak{s}_{\kappa}(t) := 
	\begin{cases}
		\frac{1}{\sqrt{\kappa}} \sin(\sqrt{\kappa}t)  &\text{if } \kappa>0, \\
		t &\text{if } \kappa=0, \\
		\frac{1}{\sqrt{-\kappa}} \sinh(\sqrt{-\kappa}t) &\text{if } \kappa<0, 
	\end{cases}
\quad
\mathfrak{c}_{\kappa}(t) :=
	\begin{cases}
		\cos(\sqrt{\kappa}t) &\text{if } \kappa>0, \\
		1 &\text{if } \kappa=0, \\
		\cosh(\sqrt{-\kappa}t) &\text{if } \kappa<0
	\end{cases}
\end{align*}
for all $\kappa, t \in \R$, and  
\begin{align*}
J_{H, K, N}(t) := 
	\begin{cases}
		\left( \mathfrak{c}_{\delta}(t) + \frac{H}{N-1}\mathfrak{s}_{\delta}(t)  \right)_+^{N-1} &\text{if } N \notin \{1, \infty\}, \\
		\exp(Ht -\frac{K}{2}t^2) &\text{if } N=\infty, \\
		1 &\text{if } N=1, K=0, \\
		\infty &\text{otherwise}
	\end{cases}
\end{align*}
for all $H, K, t \in \R$ and $N \in (-\infty, \infty]$, where $\delta := K/(N-1)$.
Now, we also define the {\it Curvature-Dimension-Diameter (CDD) dilation profile} by  
\[
\mathscr{D}_{K, N, D}(\theta) := \inf_{(a, b) \in \Delta_D, H \in \R} \max \left\{ \frac{a(1-\theta)}{\int_0^{b} J_{H, K, N}(t) ~dt}, \frac{a\theta}{\int_{-a}^0 J_{H, K, N}(t) ~dt} \right\}
\]
for any $K,N \in \R$, $D \in (0, \infty]$ and $\theta \in [0, 1]$, where 
\begin{align*}
\Delta_D :=
	\begin{cases}
		\{ (a, b) ~|~ a, b >0, a+b =D\} &\text{if $D<\infty$},\\
		 \{ (a, \infty) ~|~ a>0 \} &\text{if $D=\infty$}.
	\end{cases}
\end{align*}

\begin{Thm}\label{MT0}
Let $(\M, g, \m)$ be a geodesically-convex $n$-dimensional weighted Riemannian manifold satisfying $\m(\M)=1$, $\Ric_N\geq K$ and $\mathrm{diam} \M \leq D$ for some $K\in \R$, $N \in (-\infty, 1)\cup [n, \infty]$ and $D \in (0, \infty]$ ($N\neq 1$ when $n=1$). 
Then for any strongly-convex subset $A \subset \M$, it holds that 
\[
\m^*(A) \geq \mathscr{D}_{K, N, D}(\m(A)).
\]
\end{Thm}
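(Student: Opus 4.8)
The plan is to reduce the statement to a one‑dimensional problem by the \emph{needle decomposition} (localization) of Klartag \cite{K}, in the form adapted to the lower bound $\Ric_N\geq K$. First I would fix a strongly-convex subset $A\subset\M$ with $\m(A)=\theta$ and observe that, by the definition of the dilation area $\m^*(A)=\liminf_{\varepsilon\to0}\varepsilon^{-1}(\m(A_\varepsilon)-\m(A))$, it suffices to bound from below, for each small $\varepsilon>0$, the quantity $\m(A_\varepsilon)-\m(A)$. The key geometric point is that the dilation set $A_\varepsilon$ is built out of minimizing geodesics that spend more than a $(1-\varepsilon)$-fraction of their time in $A$; since $A$ is strongly convex, each such geodesic meets $A$ in a single subinterval, and one can therefore hope to localize the whole estimate along geodesics. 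Concretely, I would introduce an auxiliary $1$-Lipschitz test function (for instance a suitable truncation of the signed distance to $\partial A$, or the function produced by an $L^1$-optimal transport problem transporting $\m|_A$ appropriately against $\m|_{\M\setminus A}$) so that the needle decomposition associated to it disintegrates $\m$ into a family of conditional measures $\{\m_q\}_{q\in Q}$, each supported on a minimizing geodesic $\gamma_q$, with $\m_q$ having a density proportional to $J_{H_q,K,N}$ for some $H_q\in\R$ along an interval of length at most $D$, and such that $A$ corresponds on $\m_q$-a.e.\ needle to a \emph{half-line} needle $[0,b_q]$ (this is where strong convexity of $A$ is essential: it guarantees the conditional sets $A\cap\gamma_q$ are subintervals that one may arrange to be initial segments).

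Next I would prove the one‑dimensional version. Along a single needle $(\gamma_q,\m_q)$ with $\m_q$ of density $c_q J_{H_q,K,N}$ on $[-a_q,b_q]$ (with $a_q+b_q\le D$) and $A\cap\gamma_q=[0,b_q]$, the $\varepsilon$-dilation moves the left endpoint from $0$ to roughly $-\varepsilon b_q$ (up to $O(\varepsilon^2)$, using that the geodesic realizing the dilation at a boundary point has length comparable to $b_q$), so the infinitesimal gain is
\[
\m_q^{*}(A\cap\gamma_q)=b_q\,\frac{J_{H_q,K,N}(0)}{\int_{-a_q}^{b_q}J_{H_q,K,N}(t)\,dt}
= \frac{b_q}{\int_{-a_q}^{b_q}J_{H_q,K,N}(t)\,dt},
\]
after normalizing $J_{H,K,N}(0)=1$. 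Writing $a:=b_q$ and splitting the denominator as $\int_{-a_q}^{0}+\int_{0}^{b_q}$ and using $\m_q(A\cap\gamma_q)=\theta_q$, $\m_q(\gamma_q\setminus A)=1-\theta_q$, one gets that this ratio is at least $\max\{\,a(1-\theta_q)/\!\int_0^{b}J_{H_q,K,N}, \ a\theta_q/\!\int_{-a}^0 J_{H_q,K,N}\,\}$ for the corresponding $(a,b)\in\Delta_D$, which is bounded below by $\mathscr D_{K,N,D}(\theta_q)$ by the very definition of the CDD dilation profile as an infimum over $(a,b)\in\Delta_D$ and $H\in\R$.

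Finally I would integrate over the needle space: $\m^*(A)\ge\int_Q \m_q^{*}(A\cap\gamma_q)\,d\nu(q)\ge\int_Q \mathscr D_{K,N,D}(\theta_q)\,d\nu(q)$, and then invoke concavity of $\theta\mapsto\mathscr D_{K,N,D}(\theta)$ (which I expect to hold, or at least the weaker property that $\mathscr D_{K,N,D}$ lies above the chord, proved by a direct calculus lemma) together with Jensen's inequality and $\int_Q\theta_q\,d\nu(q)=\m(A)=\theta$ to conclude $\m^*(A)\ge\mathscr D_{K,N,D}(\theta)$. Two points deserve care and will be the main obstacles: (i) making rigorous the claim that the dilation area localizes, i.e.\ that $\m^*(A)\ge\int_Q\m_q^*(A\cap\gamma_q)\,d\nu(q)$ — this requires a Fatou‑type argument interchanging $\liminf_{\varepsilon\to0}$ with the needle integral, and a quantitative lower bound on the portion of $A_\varepsilon\setminus A$ captured needle‑by‑needle, uniformly enough in $\varepsilon$; and (ii) verifying that strong convexity of $A$ forces, on $\nu$-a.e.\ needle, the set $A\cap\gamma_q$ to be a single interval that can be taken as an endpoint segment so that the model profile $J_{H,K,N}$ on an interval of length $\le D$ genuinely applies. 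The concavity of $\mathscr D_{K,N,D}$ and the model computation \eqref{aaaaaa}–\eqref{b} are then routine in comparison.
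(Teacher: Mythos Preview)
Your overall architecture (localize via Klartag, prove a one-dimensional bound, reintegrate) matches the paper, but two steps in your plan are genuinely flawed.

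First, the conditional measures $\m_q$ on the needles are \emph{not} of density $c_q J_{H_q,K,N}$; Klartag's theorem only guarantees that each $(\gamma_q,\m_q)$ satisfies $CD(K,N)$, i.e.\ the density $e^{-\psi_q}$ has $\psi_q$ $(K,N-1)$-convex. So your one-dimensional computation, which treats the density as exactly a model density, does not apply. The paper's Theorem~\ref{thm:MT0} proves the needle bound for an \emph{arbitrary} $CD(K,N)$ density on an interval: one first slides the subinterval $A=[a,b]$ (keeping $\mu(A)$ fixed) until either \eqref{sinh_equation} holds or $A$ hits the boundary of the support, and only then uses the $(K,N-1)$-convexity inequality $e^{-\psi(x+t)}\le e^{-\psi(x)}J_{-\psi'(x),K,N}(t)$ at the endpoints to compare with the model. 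This sliding/comparison step is the substantive content of the one-dimensional argument and is absent from your sketch; your claim that $A\cap\gamma_q$ can be arranged as an endpoint segment of the needle is neither true in general nor what the paper does.

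Second, your final Jensen step relies on concavity of $\mathscr D_{K,N,D}$, which is not established (and in fact is only discussed for $K=0$ in Remark~\ref{rem: rem1}(4)). The paper sidesteps this entirely: it applies the localization to $f=\mathbf 1_A-\theta_0$, so that condition (iii) of Theorem~\ref{localization} forces $\mu_I(A)=\theta_0$ for $\nu$-a.e.\ needle $I$. Thus every needle carries the \emph{same} mass $\theta_0$ on $A$, the one-dimensional bound gives $\mu_I^*(A\cap I)\ge\mathscr D_{K,N,D}(\theta_0)$ directly, and no convexity of the profile is needed. Your Fatou-type concern (i) is handled exactly as you suggest, via $(A\cap I)_\varepsilon\subset A_\varepsilon$ and Fatou's lemma.
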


We say that $A\subset \M$ is {\it strongly-convex} if for any $p, q\in A$, there exists a unique minimizing geodesic connecting $p$ and $q$ and is included in $A$.
We also define the diameter of $\M$ by $\mathrm{diam}\M := \sup \{ d_g(x, y) ~|~x, y \in \M\}$, where $d_g$ is the distance function canonically induced by $g$.
In some special cases, the CDD dilation profiles have more concrete representations. 

\begin{Cor}\label{MT1}
Let $(\M, g, \m)$ be a geodesically-convex $n$-dimensional weighted Riemannian manifold satisfying $\m(\M)=1$, $\Ric_N\geq K$ and $\mathrm{diam} \M \leq D$ for some $K\in \R$, $N \in (-\infty, 0]\cup [n, \infty]$ and $D \in (0, \infty]$ ($N \neq 1$ when $n=1$).
Then there exists some function $\D_{K, N, D}$ on $[0, 1]$ depending only on $K, N$ and $D$ such that 
$\m^*(A) \geq \D_{K, N, D}(\m(A))$ holds for any strongly-convex subset $A \subset \M$, where the function $\D_{K, N, D}$ is given as follows: 

{\bf Case 1.} If $N=\infty$, $K> 0$ and $D=\infty$, 
\[
\D_{K, N, D} := \inf_{x \in \R} \D^{\flat}(e^{-Kt^2/2}, [x, \infty)).
\]

{\bf Case 2.} If $N=\infty$, $K\neq 0$ and $D<\infty$, 
\[
\D_{K, N, D} := \inf_{x \in \R} \D^{\flat}(e^{-Kt^2/2}, [x, x+D]).
\]

{\bf Case 3.} If $N=\infty$ and $K=0$, 
\[
\D_{K, N, D}(\theta) := \D^{\flat}(e^{-t}, [0, \infty))(\theta) = -(1-\theta)\log(1-\theta)
\]
for any $\theta \in [0,1]$.

{\bf Case 4.} If $N \in [n, \infty)$ and $K>0$, 
\[
\D_{K, N, D} := \inf_{x \in [0, \pi/\sqrt{\delta})} \D^{\flat} (\sin^{N-1}(\sqrt{\delta}t), [x, \min\{x+D, \pi/\sqrt{\delta}\}]).
\]

{\bf Case 5.} If $N \in [n ,\infty)$ and $K=0$, 
\[
\D_{K, N, D}(\theta) := \D^{\flat}( (-t)^{N-1}, [-1, 0])(\theta) = -N(1-\theta-(1-\theta)^{1-1/N})
\]
for any $\theta \in [0,1]$. 

{\bf Case 6.} If $N \in [n, \infty)$, $K<0$ and $D <\infty$,
\[
\D_{K, N, D} :=
\min \left\{
	\begin{array}{ll}
		\inf_{x \in (-\infty, 0)} \D^{\flat}( \sinh^{N-1}(-\sqrt{-\delta}t), [x, \min\{x+D, 0\}]), \\
		\inf_{x \in \R} \D^{\flat}( \cosh^{N-1}(\sqrt{-\delta}t), [x, x+D]), \\
		\D^{\flat}( e^{-\sqrt{-\delta}(N-1)t}, [0, D])
	\end{array}
\right\}.
\]

{\bf Case 7.} If $N \in (-\infty,0]$ and $K>0$,
\begin{align*}
\D_{K, N, D} := \min \left\{
	\begin{array}{ll}
		\inf_{x >0} \D^{\flat}( \sinh^{N-1}(\sqrt{-\delta}t), [x, x+D]), \\
		\inf_{x \in \R} \D^{\flat}( \cosh^{N-1}(\sqrt{-\delta}t), [x, x+D]), \\
		\D^{\flat}( e^{\sqrt{-\delta}(N-1)t}, [0, D])
	\end{array}
\right\},
\end{align*}
where we put $[x, x+D]:=[x, \infty)$ when $D=\infty$.

{\bf Case 8.} If $N \in (-\infty,0)$ and $K=0$,
\[
\D_{K, N, D}(\theta) := \D^{\flat}( t^{N-1}, [1, \infty))(\theta) = -N(1-\theta-(1-\theta)^{1-1/N})
\]
for any $\theta \in [0, 1]$.

{\bf Case 9.} If $N \in (-\infty,0]$, $K<0$ and $D < \pi/\sqrt{\delta}$,
\[
	\D_{K, N, D} := \inf_{x \in (0, \pi/\sqrt{\delta}-D)} \D^{\flat} (\sin^{N-1}(\sqrt{\delta}t), [x, x+D]).
\]
\end{Cor}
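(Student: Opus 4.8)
The plan is to reduce $\mathscr{D}_{K,N,D}$ to an infimum of flat dilation profiles over one-dimensional model densities, and then to evaluate that infimum case by case.

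\emph{Step 1: the reduction.} I would first establish the identity
\[
\mathscr{D}_{K,N,D}(\theta)=\inf_{(a,b)\in\Delta_D,\ H\in\R}\D^{\flat}\bigl(J_{H,K,N},\,[-a,b]\bigr)(\theta),
\]
the infimum being restricted to those $(a,b,H)$ for which $J_{H,K,N}$ is integrable and not identically zero on $[-a,b]$. The core of this is a \emph{rebalancing}: given $(a,b,H)$ and $\theta\in(0,1)$, let $\alpha\in(-a,b)$ be the $\theta$-quantile, $\int_{-a}^{\alpha}J_{H,K,N}=\theta\int_{-a}^{b}J_{H,K,N}$, and put $a':=\alpha+a$, $b':=b-\alpha$, $J_{H',K,N}(t):=J_{H,K,N}(\alpha)^{-1}J_{H,K,N}(t+\alpha)$ --- a genuine member of the family, since every translate of $J_{H,K,N}$ is a positive multiple of some $J_{H',K,N}$ (by the addition formulas for $\mathfrak{s}_\delta,\mathfrak{c}_\delta$, and for $\exp$ when $N=\infty$) and $J_{H,K,N}(0)=1$. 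Then $(a',b')\in\Delta_D$, the $\theta$-quantile of $J_{H',K,N}$ on $[-a',b']$ is $0$, and the two entries of the maximum defining $\mathscr{D}_{K,N,D}$ at $(a',b',H')$ coincide, with common value $\D^{\flat}(J_{H,K,N},[-a,b])(\theta)$; hence $\mathscr{D}_{K,N,D}(\theta)\le\D^{\flat}(J_{H,K,N},[-a,b])(\theta)$ for every $(a,b,H)$. For the reverse inequality I would fix $H$ and examine $a\mapsto\max\{\cdots\}$: since $J_{H,K,N}$ is positive and unimodal on its support, this map decreases to its balanced value and then increases when $J_{H,K,N}$ decreases at the marked point $0$ (i.e.\ $H<0$), so its infimum over $a$ is $\D^{\flat}$ of the corresponding model density; when $J_{H,K,N}$ increases at $0$ (i.e.\ $H\ge0$) the infimum over $a$ equals $\theta$, and one checks, using $\D^{\flat}(f,[c,d])(\theta)\le\theta$ for decreasing $f$, that $\theta$ never lies below the infimum of $\D^{\flat}$ over the family. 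This yields the displayed identity (the endpoints $\theta\in\{0,1\}$ being immediate).

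\emph{Step 2: the model densities.} By Step 1 and the invariance of $\D^{\flat}$ under positive rescaling of the density and orientation-preserving affine reparametrization of the interval, the infimum is unaltered if $\{J_{H,K,N}\}$ is replaced by representatives of the CD$(K,N)$ densities on intervals of length at most $D$. A routine discussion of $J_{H,K,N}$ by the sign of $\delta=K/(N-1)$ --- and of $K$ when $N=\infty$ --- produces the representatives: for $\delta>0$, the single family $\sin^{N-1}(\sqrt{\delta}\,t)$ restricted to subintervals of the period $(0,\pi/\sqrt{\delta})$; for $\delta=0$, the constant density together with $(-t)^{N-1}$ on $[-1,0]$ and $t^{N-1}$ on $[1,\infty)$; for $\delta<0$, the three families $\cosh^{N-1}(\sqrt{-\delta}\,t)$, $\sinh^{N-1}(\pm\sqrt{-\delta}\,t)$ (supported on a half-line), and $e^{\pm\sqrt{-\delta}(N-1)t}$; and for $N=\infty$, the density $e^{-Kt^2/2}$ (supported on a half-line when $K>0$, on bounded intervals only when $K<0$) and $e^{\pm t}$ when $K=0$. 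Imposing integrability and the diameter bound deletes the inadmissible densities and produces the truncations in the statement: $\sin^{N-1}$ lives on one period, whence $\min\{x+D,\pi/\sqrt{\delta}\}$; the constant and $\cosh$-type densities have infinite mass on an infinite interval, hence need $D<\infty$; the $\sinh$- and exponential-type densities are one-sided, so their admissible needles are the length-$\le D$ intervals inside the half-line, which is exactly what forces $D<\infty$ in Case~6 and $D<\pi/\sqrt{\delta}$ in Case~9. The residual orientation freedom becomes the $\inf_x$; for densities with a zero or a singularity at an endpoint the orientation is forced, so Cases~1--5, 8, 9 are single infima while Cases~6--7 are minima over the three sub-families.

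\emph{Step 3: scale-invariant cases, closed forms, and the main difficulty.} When $K=0$ one has $J_{H,0,N}(\lambda t)=J_{\lambda H,0,N}(t)$ (and likewise for $N=\infty$), so positive rescalings of the interval stay within the family; hence $\mathscr{D}_{0,N,D}$ does not depend on $D$, and the admissible densities collapse, modulo equivalence, to a single one --- $e^{-t}$ on $[0,\infty)$ for $N=\infty$, $(-t)^{N-1}$ on $[-1,0]$ for $N\in[n,\infty)$, $t^{N-1}$ on $[1,\infty)$ for $N<0$. Evaluating $\D^{\flat}$ on these by the substitutions that compute $\int e^{-t}$, $\int(-t)^{N-1}$, $\int t^{N-1}$ gives $-(1-\theta)\log(1-\theta)$ and $-N\bigl(1-\theta-(1-\theta)^{1-1/N}\bigr)$, and the remaining admissible densities (the constant one, the finite power and Gaussian truncations) are checked to yield larger values: writing $h(\theta)=\theta+N(1-\theta)-N(1-\theta)^{1-1/N}$ for the gap to the constant-density profile $\theta$, one has $h(0)=h'(0)=0$ and $h''(\theta)=\tfrac{N-1}{N}(1-\theta)^{-(N+1)/N}$ of constant sign on $(0,1)$, so $h\ge0$, while the truncation comparisons are monotone in the truncation length and are handled the same way. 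I expect the main difficulty to be Step 2: the admissible one-dimensional CD$(K,N)$ densities of diameter $\le D$ must be enumerated \emph{exactly}, with care for the trichotomy in $\mathrm{sign}(\delta)$ and the borderline exponential densities, for the one-period and half-line supports (which jointly generate the truncations and the dimension/diameter restrictions delimiting Cases~1--9), and for the orientation, to which $\D^{\flat}$ is genuinely sensitive; the monotonicity in $a$ of $\max\{\cdots\}$ used in Step 1 is elementary but must still be checked near the degenerate configurations ($H\to\pm\infty$, $\theta\in\{0,1\}$, and densities non-integrable on one side of the marked point).
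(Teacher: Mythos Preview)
Your proposal is correct and follows essentially the same route as the paper, with one organizational difference worth noting. You first establish the abstract identity $\mathscr{D}_{K,N,D}(\theta)=\inf_{(a,b,H)}\D^{\flat}(J_{H,K,N},[-a,b])(\theta)$ and then specialize; the paper instead works case by case, but the mechanism is the same: find, for each admissible configuration, the \emph{balanced} point where the two entries of the $\max$ coincide, and recognize that common value as a flat dilation profile of a model density. The substantive difference is in how the balance is located. The paper fixes the width $b-a$ and varies $H$: since $t\mapsto J_{H,K,N}(t)$ is pointwise increasing in $H$ for $t>0$ and decreasing for $t<0$, the two entries are monotone in opposite directions and (crucially) continuous in $H$, so a unique balanced $H_\theta$ exists --- and it is precisely the failure of this continuity that forces the exclusions (Case~6 with $D=\infty$, $N=0$ with $K=0$, and $N\in(0,1)$). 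You instead fix $H$ and vary $a$, using your rebalancing translation for one inequality and a unimodality-in-$a$ argument for the other; this works, but the monotonicity in $a$ is less immediate than the monotonicity in $H$ (your appeal to unimodality of $J_{H,K,N}$ does not directly give unimodality of $a\mapsto\max\{\cdots\}$ when the peak of $J$ lies in $(-a,0)$), so the paper's parametrization is a bit cleaner here. Your Step~2/3 simplifications match the paper's Lemmas~3.4 and~3.5: the former is exactly your ``$\D^{\flat}(f,[c,d])(\theta)\le\theta$ for decreasing $f$'' (used to discard the increasing-density branches), and the latter is the precise criterion --- monotonicity of $x\mapsto f'(x)(x-a)/f(x)$ --- under which $\D^{\flat}(f,[a,x])(\theta)$ is monotone in $x$, which is what justifies your ``truncation comparisons are monotone in the truncation length'' in Cases~4, 5, 8. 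Finally, note that for the corollary as stated only the inequality $\mathscr{D}_{K,N,D}\ge\D_{K,N,D}$ is needed (via Theorem~1.2); your rebalancing argument supplies the reverse inequality as well, which the paper asserts but does not spell out.
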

In the above corollary, the infimums are considered in the pointwise sense. 
We can also observe that $\mathscr{D}_{K, N, D}$ coincides with $\D_{K, N, D}$ for a triple $(K, N, D)$ in Corollary \ref{MT1}. 
Note also that the range of a triple $(K, N, D)$ discussed in Corollary \ref{MT1} is derived from the continuity in $H \in \R$ of 
entries in $\mathscr{D}_{K, N, D}$ (see the proof of Corollary \ref{MT1} and Remark \ref{rem: rem1}).
We note some remarks of Corollary \ref{MT1}.
In general, when $(\M, g, \m)$ satisfies the {\it Curvature-Dimension-Diameter (CDD) condition}, namely $\Ric_N \geq K$ and $\mathrm{diam}\M\leq D$, then for any $\lambda>0$, 
$(\M, \lambda^2 g, \m)$ satisfies $\Ric_N \geq K/\lambda^2$ and $\mathrm{diam}\M \leq \lambda D$. 
Thus, since the dilation area is invariant under scaling, we see that $\D_{K, N, \infty}$ in Cases 1 and 4 coincides with $\D_{1, N, \infty}$
and that $\D_{0, N, D}$ in Cases 3, 5 and 8 are independent of $D \in (0, \infty]$. 
In particular, we emphasize that $\D_{K, N, \infty}$ does not converge to $\D_{0, N, \infty}$ as $K\to 0$. 

This paper is organized as follows. 
In section 2, we introduce the weighted Ricci curvature and the localization on a weighted Riemannian manifold. 
In section 3, we discuss the dilation inequality on $\R$. In the first subsection, we give sufficient conditions such that the infimum of the dilation profile is attained at an interval. As its corollary, we obtain an explicit representation of the Gaussian dilation profile on $\R$.
In the next subsection, we complete the proofs of Theorem \ref{MT0} and Corollary \ref{MT1}. 
In section 4, we furthermore discuss the dilation inequality associated with $\varepsilon$. 
In the final section, we prove a new type of functional inequalities related to entropies, derived from the comparison of the dilation profiles \eqref{b}. 
 
\section*{{\bf Acknowledgments}}
The author would like to thank Professor Shin-ichi Ohta 
for helpful comments and supports. 
This work was supported by JST, ACT-X Grant Number JPMJAX200J, Japan.

\section{Preliminaries for weighted Riemannian manifolds}\label{WRM}

\subsection{Localization associated with lower weighted Ricci curvature bounds}
In this subsection, we introduce some notions on weighted Riemannian manifolds and the needle decomposition (also called the localization) constructed by Klartag in \cite {K}. Using this decomposition, we can reduce our problem to the $1$-dimensional one. 

Let $(\M, g, \m)$ be a geodesically-convex (namely, every two points can be connected by a minimizing geodesic) $n$-dimensional weighted  Riemannian manifold with $\m=e^{-\Psi} \mathrm{vol}_g$ and $\Psi \in C^{\infty}(\M)$, where $\mathrm{vol}_g$ is the canonical Riemannian volume on $\M$ induced by $g$. 
For $N \in (-\infty, \infty]$, the weighted Ricci curvature $\Ric_N$ is defined as

\begin{description}
\item{(1)} $\displaystyle \Ric_N(v) := \Ric_g(v) + \Hess \Psi(v, v) -\frac{\inn{\nabla \Psi(x)}{v}^2}{N-n}$ \quad \text{ if } $N\neq n, \infty$,  
\item{(2)} $\displaystyle \Ric_{\infty}(v) := \Ric_g(v) + \Hess \Psi(v, v)$,
\item{(3)} $\Ric_n(v) :=
\begin{cases}
\Ric_g(v) + \Hess \Psi(v, v)  &\quad \text{ if }  \inn{\nabla \Psi(x)}{v}=0, \\
-\infty &\quad \text{otherwise}
\end{cases}$
\end{description}
for any $p\in \M$ and $v \in T_p\M$, where $\Ric_g$ is the Ricci curvature on $\M$ canonically induced by $g$. 
We say that $(\M, g, \m)$ satisfies the {\it Curvature-Dimension (CD) condition $CD(K, N)$}, or $\Ric_N \geq K$, for some $K\in \R$ and $N \in (-\infty, \infty]$ if $\Ric_N (v)\geq K g(v, v)$ for any $v \in T\M$. 
Simple observations yield that 
\[
\Ric_{n}\leq \Ric_N\leq \Ric_{\infty}\leq \Ric_{N'}
\]
for any $N \in (n ,\infty)$ and $N' \in (-\infty, 1)$. 

The following theorem is the needle decomposition proved by Klartag \cite{K} on a weighted Riemannian manifold, which has a lot of geometric and analytic applications (for instance \cite{Ma}, \cite{OT}, \cite{MO}), and its extensions and applications in more general spaces are also investigated (see \cite{CM}, \cite{Oh2}). 
\begin{Thm}[\cite{K}]\label{localization}
Let $n \geq 2$, $K \in \R$, $N \in (-\infty, 1) \cup [n, \infty]$ and $(\M, g, \m)$ be a geodesically-convex $n$-dimensional Riemannian manifold satisfying $CD(K, N)$. 
Assume that $f: \M\to \R$ is an integrable function with $\int_\M f ~d\m =0$, and that there exists some $x_0 \in \M$ satisfying $\int_\M |f(x)| d_g(x, x_0) ~d\m(x) <\infty $. Then, there exist a partition $Q$ of $\M$, a measure $\nu$ on $Q$ and a family $\{\mu_I\}_{I \in Q}$ of probability measures on $\M$ satisfying the following:
\begin{itemize}
\item[(i)] For any Lebesgue measurable set $A \subset \M$, 
\[
\m(A) = \int_Q \mu_I(A) ~d\nu(I).
\]
\item[(ii)] For $\nu$-almost every $I \in Q$, $I$ is a minimizing geodesic in $\M$, and $\mu_I$ is supported on $I$. Moreover if $I$ is not a singleton, then the density of $\mu_I$ is smooth, and $(I, |\cdot|, \mu_I)$ satisfies $CD(K, N)$. 
\item[(iii)] For $\nu$-almost every $I \in Q$, $\int_I f ~d\mu_I =0$ holds.
\end{itemize} 
\end{Thm}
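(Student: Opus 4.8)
The plan is to deduce the decomposition from $L^1$ (Monge--Kantorovich) optimal transport for the distance cost $c=d_g$ and to analyse the resulting partition of $\M$ into transport rays, following the scheme of Evans--Gangbo and Caffarelli--Feldman--McCann in the Euclidean case, adapted to the Riemannian and weighted setting. First I would split $f=f_{+}-f_{-}$ into positive and negative parts; since $f\in L^1(\m)$ and $\int_{\M}f\,d\m=0$ these have a common finite mass $m>0$ (the case $f\equiv 0$ being trivial), and the moment hypothesis makes the transport cost between $\frac1m f_{+}\,\m$ and $\frac1m f_{-}\,\m$ finite. Kantorovich duality then provides a $1$-Lipschitz function $u\colon\M\to\R$ attaining the dual optimum, which plays the role of a Kantorovich potential.

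Next I would introduce the transport rays: maximal unit-speed minimizing geodesics $\gamma$ along which $u\circ\gamma$ decreases at unit rate, and let $\mathcal T\subset\M$ denote their union. The standard structural analysis --- $c$-cyclical monotonicity of the optimal support, the fact that a $1$-Lipschitz function can decrease at unit rate only along a geodesic, and the non-branching of such geodesics --- shows that distinct maximal rays meet at most at endpoints, that the endpoint set is $\m$-negligible, and that the ``ray map'' $(I,t)\mapsto\gamma_I(t)$ parametrizes $\mathcal T$ bi-measurably, so that $\m|_{\mathcal T}$ disintegrates as $\int_Q\mu_I\,d\nu(I)$ with each $\mu_I$ supported on the geodesic $I$. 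On $\M\setminus\mathcal T$ the potential $u$ is locally constant, hence $f=0$ $\m$-a.e.\ there, and this piece is partitioned into singletons. This yields the partition $Q$, the measure $\nu$, property (i), and the ``$I$ is a minimizing geodesic and $\mu_I$ is supported on $I$'' part of (ii). Property (iii) then follows because the optimal plan moves mass from $\{f>0\}$ to $\{f<0\}$ \emph{along} the rays, so the incoming and outgoing masses balance on $\nu$-a.e.\ ray; rigorously, one localizes along the rays the distributional identity $\mathrm{div}(\sigma\,\nabla u)=f_{+}-f_{-}$ satisfied by the transport density $\sigma$, which gives $\int_I f\,d\mu_I=0$.

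The last and most substantial point is the $CD(K,N)$ property of the conditionals in (ii). Along a non-degenerate ray write $\mu_I=\rho_I(t)\,dt$; up to normalization $\rho_I(t)$ is the Jacobian of the ray flow, i.e.\ the rate at which $\m$ is carried by the $(n-1)$-parameter family of neighbouring rays as one moves along $\gamma_I$. Its evolution along the geodesic $\gamma_I$ obeys a matrix Riccati equation; inserting $\Ric_N\geq K$ through the Bochner formula --- for finite $N$ using the algebraic device built into the definition of $\Ric_N$ to absorb the $\nabla\Psi$ term, which is precisely why the admissible range is $N\in(-\infty,1)\cup[n,\infty]$ --- and comparing with the one-dimensional model gives the differential inequality characterizing $CD(K,N)$ for $\rho_I$: $(\rho_I^{1/(N-1)})''+\frac{K}{N-1}\rho_I^{1/(N-1)}\leq 0$ when $N\notin\{1,\infty\}$, and $-(\log\rho_I)''\geq K$ when $N=\infty$. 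A maximal ray terminates exactly at the first focal point of the ray family, where $\rho_I$ vanishes, consistent with the model, and smoothness of $\rho_I$ on the open ray follows from the regularity of the ray flow.

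I expect the main obstacle to be the regularity theory underpinning the construction: proving that the ray map is Lipschitz and $\m$-a.e.\ differentiable (so that the disintegration and the Jacobian/Riccati computation above are legitimate), that the rays are non-branching with $\m$-null endpoint set, and that the partition $Q$ is genuinely measurable, all on a merely geodesically convex (not necessarily complete) weighted manifold. Once this infrastructure is in place, the balance identity (iii) and the curvature-dimension property (ii) reduce to local ODE computations along geodesics.
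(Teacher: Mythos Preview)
The paper does not contain a proof of this theorem: it is quoted from Klartag's memoir \cite{K} and used as a black box (note the attribution in the theorem header and the sentence immediately following the statement). There is therefore nothing in the present paper to compare your attempt against.

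For what it is worth, your outline is broadly faithful to Klartag's actual argument in \cite{K}: one solves the $L^1$ Monge--Kantorovich problem between $f_+\m$ and $f_-\m$, obtains a $1$-Lipschitz potential $u$, partitions (almost all of) $\M$ into maximal transport rays of $u$, disintegrates $\m$ along this partition, and derives the $CD(K,N)$ property of the needle densities from a Riccati comparison for the Jacobian of the ray map. Your identification of the regularity of the ray map and the measurability of the partition as the main technical hurdles is also accurate; Klartag devotes most of \cite{K} to exactly these points, and they are far from routine. But none of this analysis appears in the paper under review, which simply invokes the result.
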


In virtue of this localization, we can reduce our main assertion to the 1-dimensional one. Thus, we will discuss the dilation inequality on $\R$ in the next section. 

Now, we also recall that the $\varepsilon$-dilation $A_{\varepsilon}$ of a Borel subset $A \subset \M$ is defined by \eqref{dilation2}. 
Note that $A_\varepsilon$ is also a Borel subset. 
Indeed, setting 
\[
c(\xi) := \sup\{ t \geq 0~|~ \text{$\exp_p(sv)$ is a minimizing geodesic for $s\in [0, t]$ in $\M$}\}>0
\]
for every $\xi = (p, v) \in T\M$ and $X:=\{ \xi \in T\M ~|~ g(\xi, \xi) < c(\xi)\}$, we have, for any minimizing geodesic $\gamma_\xi(s) =\exp_p(sv)$ with $s \in [0, 1]$ and $\xi =(p, v) \in X$, 
\[
\phi_A(\xi):=|\gamma_\xi \cap A| = \int_0^1 \mathbf{1}_A(\gamma_{\xi}(s)) ~ds. 
\]
Since $\phi_A$ is Borel measurable on $X$ and $c$ is continuous, the set 
$X_{\varepsilon}:=\{ \xi \in X~|~ \phi_A(\xi)>1-\varepsilon\}$ is a Borel set, and hence $A_{\varepsilon} = A \cup \pi(X_{\varepsilon})$
is also a Borel set, where $\pi : T\M \to \M$ is the canonical projection. 

\subsection{$(K, N)$-convex functions}

Let $(\M, g)$ be a geodesically-convex $n$-dimensional Riemannian manifold.
For $K \in \R$ and $N \in \R \setminus \{0\}$, we say that a function $\psi \in C^2(\M)$ is {\it $(K, N)$-convex} if 
\[
\mathrm{Hess} \psi(v, v) - \frac{\inn{\nabla \psi}{v}^2}{N} \geq K|v|^2
\]
for any $v \in T\M$.
According to \cite{EKS} for $N>0$ and \cite{Oh1} for $N<0$, there exists an equivalent representation as follows. 

\begin{Lem}[{\cite[Lemma 2.2]{EKS}, \cite[Lemma 2.1]{Oh1}}]\label{le:convex}
Let $(\M, g)$ be a geodesically-convex $n$-dimensional Riemannian manifold, $K \in \R$ and $N \in \R \setminus \{0\}$. For any $\psi \in C^2(\M)$, the following are equivalent: 
	\begin{itemize}
		\item[(i)] $\psi$ is $(K,N)$-convex. 
		\item[(ii)] For any non-constant minimizing geodesic $\gamma : [0, 1] \to \M$, with $d :=d_g(\gamma(0), \gamma(1)) < \pi\sqrt{N/K}$ when $N/K >0$, we have
			\[
				\psi_N(\gamma(1))^N \leq \left(\mathfrak{c}_{K/N}(d)\psi_N(\gamma(0)) + \frac{\mathfrak{s}_{K/N}(d)}{d}(\psi_N \circ \gamma)'(0) \right)_+^N, 
			\]
		where $\psi_N \in C^2(\M)$ is given by 
			\[
				\psi_N(x) := e^{-\psi(x)/N}.
			\]
	\end{itemize}
\end{Lem}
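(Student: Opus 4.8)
The plan is to transfer both conditions to a one-dimensional second-order differential inequality along geodesics and then compare its solutions with the solution of the associated linear equation. First I would fix a non-constant minimizing geodesic $\gamma : [0,1] \to \M$ with (constant) speed $d := d_g(\gamma(0),\gamma(1))$ and set $h := \psi \circ \gamma$ and $u := \psi_N \circ \gamma = e^{-h/N}$. Since $\gamma$ is a geodesic, $h'(t) = \inn{\nabla\psi(\gamma(t))}{\dot\gamma(t)}$ and $h''(t) = \Hess\psi(\dot\gamma(t),\dot\gamma(t))$, so a direct differentiation gives $N u''/u = -\,(h'' - (h')^2/N)$. Writing $\kappa := K/N$ and using $u>0$, the inequality $h'' - (h')^2/N \ge K d^2$ — that is, $(K,N)$-convexity of $\psi$ evaluated along $\gamma$ — is therefore equivalent to $u'' + \kappa d^2 u \le 0$ when $N>0$ and to $u'' + \kappa d^2 u \ge 0$ when $N<0$, on all of $[0,1]$. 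I would also record that the solution $w$ of $w'' + \kappa d^2 w = 0$ with $(w(0),w'(0)) = (u(0),u'(0))$ is $w(t) = u(0)\,\mathfrak{c}_{\kappa d^2}(t) + u'(0)\,\mathfrak{s}_{\kappa d^2}(t)$, so that $w(1) = \mathfrak{c}_\kappa(d)\,u(0) + \tfrac{\mathfrak{s}_\kappa(d)}{d}\,u'(0)$ is exactly the base of the right-hand side appearing in (ii).

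For the implication (i) $\Rightarrow$ (ii) I would argue by Duhamel's formula. Assuming $\psi$ is $(K,N)$-convex, the function $g := u-w$ satisfies $g(0)=g'(0)=0$ and $g'' + \kappa d^2 g = u'' + \kappa d^2 u =: \phi$, whence $g(t) = \int_0^t \mathfrak{s}_{\kappa d^2}(t-s)\,\phi(s)\,ds$. The hypothesis $d < \pi\sqrt{N/K}$ in the case $N/K>0$ forces $\sqrt{\kappa}\,d < \pi$, so the kernel $\mathfrak{s}_{\kappa d^2}(t-s)$ is nonnegative for $0\le s\le t\le 1$ (when $\kappa\le 0$ this is automatic). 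Hence $g \le 0$, i.e.\ $u\le w$, if $N>0$, and $g\ge 0$, i.e.\ $u\ge w$, if $N<0$. If $N>0$, then $u>0$ on $[0,1]$ forces $w>0$ there, and raising $u(1)\le w(1)$ to the power $N$ yields (ii); if $N<0$, then either $w(1)\le 0$ and the right-hand side of (ii) equals $+\infty$, or $w(1)>0$ and raising $u(1)\ge w(1)>0$ to the power $N$ again yields (ii).

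For the converse (ii) $\Rightarrow$ (i) I would test (ii) on the short geodesics $\gamma(t)=\exp_p(tv)$ for $p\in\M$ and $v\in T_p\M$ with $|v|$ small (so they are minimizing, $d=|v|$, and $w(1)\to u(0)>0$). After taking $N$-th roots, (ii) reads $u(\gamma(1))\le w(1)$ for $N>0$ and $u(\gamma(1))\ge w(1)$ for $N<0$; expanding $u\circ\gamma$ in $|v|$ about $t=0$, expanding $\mathfrak{c}_\kappa(d)$ and $\mathfrak{s}_\kappa(d)/d$ to second order in $d$, and using $(u\circ\gamma)''(0)=\Hess u(v,v)$, this becomes $\Hess u(v,v)\le -\kappa|v|^2 u(0)+O(|v|^3)$ (resp.\ the reverse inequality), and replacing $v$ by $sv$ and letting $s\to 0$ gives $\Hess u(v,v)\le -\tfrac{K}{N}u(p)|v|^2$ (resp.\ $\ge$). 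Substituting $u=e^{-\psi/N}$ and using $\Hess u(v,v)=\tfrac{e^{-\psi/N}}{N}\left(\inn{\nabla\psi}{v}^2/N-\Hess\psi(v,v)\right)$ then turns both cases into $\Hess\psi(v,v)-\inn{\nabla\psi}{v}^2/N\ge K|v|^2$, which is (i). The main obstacle I anticipate is the systematic sign bookkeeping distinguishing $N>0$ from $N<0$ — in particular tracking how the positive-part truncation $(\cdot)_+$ interacts with the (de)creasing map $t\mapsto t^N$ and with the boundary conventions at $0$ — whereas the nonnegativity of the Green kernel on $0\le s\le t\le 1$, which is where the constraint $d<\pi\sqrt{N/K}$ enters, is the one genuinely geometric input and is otherwise routine.
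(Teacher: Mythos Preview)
The paper does not supply a proof of this lemma; it is quoted verbatim from \cite[Lemma 2.2]{EKS} and \cite[Lemma 2.1]{Oh1}, and the paper only records afterwards the one-dimensional consequence $e^{-\psi(x+t)} \le e^{-\psi(x)} J_{-\psi'(x),K,N}(t)$ that is actually used later. Your argument is correct and is essentially the standard proof one finds in those references: rewrite $(K,N)$-convexity along a geodesic as the second-order differential inequality $u'' + (K/N)d^2 u \lessgtr 0$ for $u = e^{-\psi\circ\gamma/N}$, compare $u$ with the solution $w$ of the associated linear ODE via Duhamel's formula (the nonnegativity of $\mathfrak{s}_{\kappa d^2}(t-s)$ on $0\le s\le t\le 1$ being precisely where the restriction $d<\pi\sqrt{N/K}$ enters), and recover (i) from (ii) by a second-order Taylor expansion on short geodesics. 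The sign bookkeeping for $N>0$ versus $N<0$ that you flag is indeed the only delicate point, and you handle it correctly.

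One small notational slip: in your (ii) $\Rightarrow$ (i) paragraph you write ``$u\circ\gamma$'' and ``$\Hess u(v,v)$'' where you mean $\psi_N\circ\gamma$ and $\Hess\psi_N(v,v)$, since you earlier defined $u := \psi_N\circ\gamma$ as a function on $[0,1]$; the intent is clear and the computation goes through once the symbols are disambiguated.
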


In particular, when a (probability) measure $\mu$ supported on an open interval $I\subset \R$ with a smooth density $e^{-\psi(x)}$ satisfies $\Ric_N \geq K$ for some $K \in \R$ and $N \in (-\infty, 1)\cup (1, \infty)$, then $\psi$ is $(K, N-1)$-convex. Thus, for any $x \in I$ and $t \in \R$ with $x+t \in I$, Lemma \ref{le:convex} implies 
\[
e^{-\psi(x+t)} \leq e^{-\psi(x)} J_{-\psi'(x), K, N}(t).
\]


\section{Estimates for dilation areas}

In this section, we discuss the dilation profile of a 1-dimensional weighted Riemannian manifold $(I, |\cdot|, \mu)$ with $\mu(I)=1$, where $I$ is an open interval in $\R$. 
For simplicity, we denote $\D_{(I, |\cdot|, \mu)}$ by $\D_{\mu}$. 

Before discussing the dilation inequality, we remark on the $\varepsilon$-dilation on $\R$. In general, given proper subsets $A \subset I \subset \R$ and $\varepsilon \in (0, 1)$, 
the $\varepsilon$-dilation of $A$ in $I$, denoted by $A_{\varepsilon}^1$, does not coincide with the one in $\R$, denoted by $A_{\varepsilon}^2$, since the former is necessarily included in $I$. 
However,  we can observe that $A_{\varepsilon}^1 = A_{\varepsilon}^2 \cap I$, and since we consider only a form $\mu(A_{\varepsilon})=\mu(A_{\varepsilon}\cap I)$ in our discussions, where $\mu$ is a (probability) measure supported on $I$, we consider the $\varepsilon$-dilation only in $\R$ even if the support of a discussed measure is not the whole space. 
The same problem occurs in more general spaces.

\subsection{Existence and properties of minimizer on the real line}

In this subsection, we consider sufficient conditions for a probability measure on $\R$ whose minimizer attaining the infimum of the dilation profile is an interval. In particular, our conditions will be satisfied by the Gaussian measures. 
\begin{Prop}\label{char_of_min}
Let $\mu$ be a probability measure on $\R$ whose density is $e^{-\psi}$ with $\psi \in C^1(\R)$. 
Assume that there exists some $\xi \in \R$ such that $\psi$ is non-increasing on $(-\infty ,\xi]$ and non-decreasing on $[\xi, \infty)$. 
In addition, we assume that for any $x, y \in \R$ with $x < y$, $\psi(x) \leq \psi(y)$ yields that
\begin{align}\label{sinh+}
\frac{\sinh(\psi(y)-\psi(x))}{y-x} \geq \frac{\psi'(y) +\psi'(x)}{2},
\end{align}
and $\psi(x) \geq \psi(y)$ yields that 
\begin{align}\label{sinh-}
\frac{\sinh(\psi(y)-\psi(x))}{y-x} \leq \frac{\psi'(y) +\psi'(x)}{2}. 
\end{align}
Given $\theta \in [0, 1]$, let $A_{\theta} \subset \R$ be an interval with $\mu(A_{\theta})=\theta$ whose endpoints $a_{\theta}, b_{\theta} \in \R$ satisfy $\psi(a_{\theta})=\psi(b_{\theta})$. 
Then for any $\theta \in [0, 1]$ and interval $A \subset \R$ with $\mu(A)=\theta$, we have $\mu^*(A) \geq \mu^*(A_{\theta})$.

\end{Prop}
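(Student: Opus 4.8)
The plan is to reduce the statement to an analysis of the function $\varepsilon \mapsto \mu(A_\varepsilon)$ for an arbitrary interval $A$ and then compare it with the balanced interval $A_\theta$. First I would compute $\mu^*(A)$ explicitly for an interval $A=(a,b)$. The key observation is that the $\varepsilon$-dilation of an interval $(a,b)$ in $\R$ consists of $(a,b)$ together with the points $x$ outside $(a,b)$ that admit a geodesic (segment) $[x,y]$ with $|[x,y]\cap A| > (1-\varepsilon)|[x,y]|$. For $x<a$, the optimal segment is $[x, b]$ (extending as far into $A$ as possible), and the condition becomes $(b-a) > (1-\varepsilon)(b-x)$, i.e. $x > b - (b-a)/(1-\varepsilon)$; so the left-hand part of $A_\varepsilon$ is an interval $(a_\varepsilon, a)$ with $a_\varepsilon = b - (b-a)/(1-\varepsilon)$. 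Symmetrically the right-hand part is $(b, b_\varepsilon)$ with $b_\varepsilon = a + (b-a)/(1-\varepsilon)$. Differentiating at $\varepsilon = 0$, one gets $-a_\varepsilon'|_{\varepsilon=0} = b-a = b_\varepsilon'|_{\varepsilon=0}$, hence
\[
\mu^*(A) = (b-a)\bigl(e^{-\psi(a)} + e^{-\psi(b)}\bigr),
\]
using that the density is $e^{-\psi}$ and is continuous. (Here one should be slightly careful if $a$ or $b$ is $\pm\infty$; in that degenerate case the corresponding term drops and the estimate only becomes easier, so I would dispose of that first.)

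Next I would set up the minimization. Fix $\theta\in(0,1)$ (the cases $\theta\in\{0,1\}$ being trivial) and parametrize intervals $A=(a,b)$ with $\mu(A)=\theta$ by, say, the left endpoint $a$; then $b=b(a)$ is determined and decreasing in $a$, with $\int_a^b e^{-\psi} = \theta$. Define
\[
F(a) := \bigl(b(a)-a\bigr)\bigl(e^{-\psi(a)} + e^{-\psi(b(a))}\bigr).
\]
The claim is that $F$ is minimized at the value $a = a_\theta$ where $\psi(a_\theta) = \psi(b_\theta)$ (which exists and is unique under the unimodality hypothesis on $\psi$, once one checks the balanced interval exists — this follows from a continuity/intermediate-value argument as $a$ ranges over the admissible set, using that $\psi$ is non-increasing then non-decreasing). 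I would compute $F'(a)$ using $b'(a) = e^{-\psi(a)}/e^{-\psi(b)}$ (from differentiating the mass constraint), and show that $F'(a)$ has the sign of $\psi(b(a)) - \psi(a)$ times a manifestly positive quantity — more precisely, that $F'(a) \le 0$ when $\psi(a) \ge \psi(b)$ and $F'(a) \ge 0$ when $\psi(a) \le \psi(b)$. This is exactly where hypotheses \eqref{sinh+} and \eqref{sinh-} enter: after clearing denominators, the condition $F'(a)\ge 0$ will be equivalent to an inequality of the shape $\sinh(\psi(b)-\psi(a))/(b-a) \ge (\psi'(a)+\psi'(b))/2$ (with $\psi' \le 0$ at $a$ and $\psi'\ge 0$ at $b$ because of unimodality, which also keeps signs under control), and similarly with the reversed inequality. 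Granting this sign analysis, $F$ is non-increasing on $\{a : \psi(a)\ge\psi(b(a))\}$ and non-decreasing on $\{a: \psi(a)\le\psi(b(a))\}$; since $a\mapsto \psi(b(a))-\psi(a)$ is monotone (it decreases as $a$ increases, because $\psi(a)$ increases past $\xi$ while $\psi(b)$ decreases), these two regions are respectively $(-\infty, a_\theta]$ and $[a_\theta,\infty)$ in the admissible parameter range, so the minimum of $F$ is attained at $a_\theta$, i.e. at $A_\theta$. That is precisely $\mu^*(A)\ge\mu^*(A_\theta)$.

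The main obstacle I anticipate is the differentiation-and-sign bookkeeping in the middle step: making rigorous that $b(a)$ is differentiable (it is, by the implicit function theorem, since $e^{-\psi}>0$), tracking the signs of $\psi'(a), \psi'(b)$ via unimodality, and algebraically massaging $F'(a) \gtrless 0$ into exactly the $\sinh$-form of \eqref{sinh+}/\eqref{sinh-}. One has to be attentive that the factor $e^{-\psi(a)} + e^{-\psi(b)}$ and the relation $e^{\psi(b)-\psi(a)} - e^{\psi(a)-\psi(b)} = 2\sinh(\psi(b)-\psi(a))$ combine correctly; the computation is routine but the place where an error would creep in. A secondary point to handle carefully is the boundary behavior: when $A$ is a half-line or the admissible range of $a$ is bounded, one must check $F$ is still well-defined and the monotonicity argument still pins the minimum at $a_\theta$ (or argue the infimum over half-lines is not smaller). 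I would also note at the outset that $A$ may be taken to be an interval with the stated balanced endpoints \emph{existing} in $\R$ — if the balanced interval degenerates to a half-line, the statement should be read accordingly, and I would remark on this rather than belabor it.
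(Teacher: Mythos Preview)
Your approach is essentially the same as the paper's: compute $\mu^*((a,b)) = (b-a)(e^{-\psi(a)}+e^{-\psi(b)})$, parametrize intervals of fixed mass by a sliding endpoint, differentiate, and use \eqref{sinh+}/\eqref{sinh-} to show the derivative changes sign exactly at the balanced interval. Two small slips to fix: $b(a)$ is \emph{increasing} in $a$ (your own formula $b'(a)=e^{\psi(b)-\psi(a)}>0$ says so), and you neither need nor generally have $\psi'(a)\le 0\le\psi'(b)$---the sign of $F'(a)$ comes directly from the $\sinh$ hypotheses, and the fact that the regions $\{\psi(a)\ge\psi(b)\}$ and $\{\psi(a)\le\psi(b)\}$ are exactly $\{a\le a_\theta\}$ and $\{a\ge a_\theta\}$ follows from unimodality together with the monotonicity of $b(a)$, not from monotonicity of $a\mapsto\psi(b(a))-\psi(a)$ as you wrote.
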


\begin{proof}
Since the assertion is clear when $\theta=0$ and $1$, we may assume that $\theta \in (0, 1)$.
For fixed $\theta \in (0, 1)$, 
we will prove that for any interval $A$ with $\mu(A)=\theta$, $\mu^*(A) \geq \mu^*(A_\theta)$ holds.
Without loss of generality, we may assume that $A$ is open. Let $A=(a, b)$ and take $\varepsilon \in (0,1)$.
By the definition of the dilation,  we obtain 
\begin{align}\label{interval0}
A_{\varepsilon} = \left(a-\frac{\varepsilon}{1-\varepsilon}(b-a), b+\frac{\varepsilon}{1-\varepsilon}(b-a) \right),
\end{align}
and hence
\begin{align}\label{interval}
\mu^*(A) = \lim_{\varepsilon \to 0}\frac{1}{\varepsilon}\left( \mu\left( \left[a-\frac{\varepsilon}{1-\varepsilon}(b-a), a \right]\right) +\mu\left( \left[b, b+\frac{\varepsilon}{1-\varepsilon}(b-a) \right]\right) \right) = (e^{-\psi(a)}+e^{-\psi(b)})(b-a). 
\end{align}
Now, we consider a function $g$ on $\R$ satisfying $\mu([a, b]) = \mu([a + s, b + g(s)])$ for $s \in \R$. 
Then we have $g'(s)e^{-\psi(b + g(s))}=e^{-\psi(a+s)}$. 
Thus, we obtain 
\begin{align}
 &\frac{d}{ds}\mu^*([a + s, b + g(s)]) \notag \\
&=\frac{d}{ds} \left[ \left(e^{-\psi(a+s)} +e^{-\psi(b+g(s))}\right)((b+g(s))-(a+s)) \right] \notag \\
 &=\left(e^{-\psi(a+s)}+e^{-\psi(b+g(s))}\right)(g'(s)-1)  \notag \\
 &\hspace{2cm}  -\left(\psi'(a+s)e^{-\psi(a+s)} + \psi'(b + g(s))g'(s)e^{-\psi(b + g(s))}\right) (b+g(s)-a-s) \notag \\
 &= 2e^{-\psi(a+s)}(b+g(s)-a-s)\left\{ \frac{\sinh(\psi(b+g(s))-\psi(a+s))}{(b+g(s))-(a+s)}  - \frac{\psi'(b + g(s)) +\psi'(a+s)}{2} \right\} \label{bb}.
\end{align}
When $\psi(b) \geq \psi(a)$, it follows from \eqref{sinh+} and the monotonicity of $\psi$ that \eqref{bb} is nonnegative for any $s \geq 0$, which implies that $\mu^*([a + s, b + g(s)])$ is non-decreasing in $s \geq 0$. 
Similarly, when $\psi(b) \leq \psi(a)$, it follows from \eqref{sinh-} and the monotonicity  of $ \psi$ that $\mu^*([a + s, b + g(s)])$ is non-increasing in $s \leq 0$. 
Therefore we obtain $\mu^*(A) \geq \mu^*(A_\theta)$. 
\end{proof}

Now, we give some examples satisfying the assumptions in Proposition \ref{char_of_min}. 
Note that \eqref{sinh-} follows from \eqref{sinh+} when $\psi$ is even. 

An important example is the Gaussian measures. Let $\psi(x) := Kx^2/2 + \log\sqrt{2\pi/K}$ for some $K>0$.
Then for any $x, y\in \R$ with $x<y$ and $x^2 \leq y^2$, 
\[
\frac{\sinh(\psi(y)-\psi(x))}{y-x} > \frac{K}{2} \cdot \frac{y^2-x^2}{y-x} = \frac{K}{2}(y+x) = \frac{\psi'(y)+\psi'(x)}{2}.
\]
Thus the Gaussian measures satisfy \eqref{sinh+}.

More generally, if $\psi$ is symmetric centered at $\xi \in \R$ in $C^2(\R)$ and non-decreasing on $[\xi, \infty)$ and $\psi''$ is non-increasing on $[\xi, \infty)$, then $\psi$ satisfies \eqref{sinh+}. 
Indeed, for $x, y\in \R$ with $x <y$ and $\psi(x) \leq \psi(y)$, we obtain 
\[
\frac{\sinh(\psi(y)-\psi(x))}{y-x} \geq \frac{\psi(y)-\psi(x)}{y-x}. 
\]
Now, fix $x$ and set $\phi(y) := \psi(y)-\psi(x)-(y-x)(\psi'(y) + \psi'(x))/2$. Then it yields that 
\[
\phi'(y) = \psi'(y) -  \frac{1}{2}(\psi'(y) + \psi'(x)) - \frac{1}{2}(y-x)\psi''(y) = \frac{y-x}{2} \left( \frac{\psi'(y) - \psi'(x)}{y-x} - \psi''(y) \right).
\]
Therefore it follows from the mean-value theorem and the monotonicity of $\psi''$ that $\phi'$ is nonnegative for $y > \max\{x, 2\xi-x\}$. 
Thus since $\psi(x)=\psi(2\xi-x)$ and $\psi'(x)=-\psi'(2\xi-x)$, we have $\phi(y) \geq 0$ for any $y>x$ with $\psi(x) \leq \psi(y)$, which implies \eqref{sinh+}.

Furthermore, we note that any probability measure $\mu$ on $[0, \infty)$ whose density $f$ supported on $[0, \infty)$ is non-increasing and satisfies \eqref{sinh+} enjoys that $\mu^*(A) \geq \D^{\flat}(f, [0, \infty))(\theta)$ for given $\theta \in [0, 1]$ and an interval $A \subset [0, \infty)$ with $\mu(A)=\theta$. 
This assertion is also confirmed by the same argument as in Proposition \ref{char_of_min}. 
For instance, the probability measure $\mu_s$ defined by \eqref{aaaaa} for $s \leq 0$ satisfies these properties (although the same result holds for $\mu_s$ with $s \in (0, 1]$, we need additional (but not difficult) discussions since the support of $\mu_s$ is compact).
On the other hand, all log-concave probability measures on $[0, \infty)$ with non-increasing densities do not satisfy \eqref{sinh+}. Indeed, we see that the probability measure whose density is proportional to $e^{-x^3}\mathbf{1}_{[0, \infty)}(x)$ does not satisfy \eqref{sinh+}.

\begin{Thm}\label{thm: minimizer}
Let $\mu$ be a probability measure on $\R$ as in Proposition \ref{char_of_min} and we use the same notations. 
Take an interval $A_\theta \subset \R$ for $\theta \in [0, 1]$ as in Proposition \ref{char_of_min}. 
In addition, we assume that $\mu^*(A_{\theta})$ is concave in $\theta \in [0, 1]$. 
Then we have $\D_{\mu}(\theta)=\mu^*(A_{\theta})$ for every $\theta \in [0, 1]$.
\end{Thm}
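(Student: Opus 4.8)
The plan is to reduce the computation of $\D_\mu(\theta) = \inf\{\mu^*(A) \mid A \subset \R \text{ Borel}, \mu(A) = \theta\}$ to the infimum over intervals, and then to invoke Proposition~\ref{char_of_min} to pin down that interval infimum to the value $\mu^*(A_\theta)$. The inequality $\D_\mu(\theta) \le \mu^*(A_\theta)$ is immediate, since $A_\theta$ is one admissible competitor. The content is the reverse inequality $\mu^*(A) \ge \mu^*(A_\theta)$ for an \emph{arbitrary} Borel set $A$ with $\mu(A) = \theta$.

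First I would establish a general structural lemma: for any Borel set $A \subset \R$ of positive, finite measure, $\mu^*(A) \ge \sum_i \mu^*(A^{(i)})$ where $\{A^{(i)}\}$ is a decomposition of (a suitable regularization of) $A$ into its connected components / maximal intervals. The mechanism is the same as in the proof of Proposition~\ref{char_of_min}: the $\varepsilon$-dilation on $\R$ acts by stretching each interval $(a,b) \subset A$ to $(a - \tfrac{\varepsilon}{1-\varepsilon}(b-a), b + \tfrac{\varepsilon}{1-\varepsilon}(b-a))$, and for $\varepsilon$ small these stretched intervals are disjoint, so the new mass added is at least the sum over components of $(e^{-\psi(a)} + e^{-\psi(b)})(b-a)\,\varepsilon + o(\varepsilon)$; one has to be a little careful when $A$ has infinitely many components or a component touching $\pm\infty$, but monotone convergence / truncation handles this. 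Thus it suffices to treat $A$ a finite disjoint union of intervals, and by the additivity of both $\mu$ and the lower bound one reduces by induction to: merging two intervals into one does not increase $\mu^*$, i.e. the extremal configuration among unions of intervals of total $\mu$-mass $\theta$ is a single interval. Here the concavity hypothesis on $\theta \mapsto \mu^*(A_\theta)$ enters decisively: if $A = A' \sqcup A''$ with $\mu(A') = \theta'$, $\mu(A'') = \theta''$, then using Proposition~\ref{char_of_min} on each piece (to replace each by the balanced interval $A_{\theta'}$, $A_{\theta''}$, which only lowers the respective dilation areas — though one must check the pieces can still be taken disjoint, which holds since we are free to translate), we get $\mu^*(A) \ge \mu^*(A_{\theta'}) + \mu^*(A_{\theta''}) \ge \mu^*(A_{\theta' + \theta''}) = \mu^*(A_\theta)$, where the middle step is precisely concavity together with $\mu^*(A_0) = 0$ (concave and vanishing at $0$ implies superadditive).

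The second main ingredient is reducing a general Borel $A$ to a (countable) union of intervals. Since $\mu$ has a continuous density $e^{-\psi} > 0$, $\mu$ is equivalent to Lebesgue measure, so $A$ differs from an open set $U \supset A$ by a set of arbitrarily small $\mu$-measure; and $\mu^*$ is not increased when we pass to such a $U$ containing $A$ (more dilation happens, but the $\liminf$ defining $\mu^*(U)$... — this monotonicity needs a short argument, since $\mu^*$ is not obviously monotone under inclusion). The cleaner route may be: directly show $\mu^*(A) \ge \D^\flat$-type lower bound by working with the endpoints of the (at most countably many) complementary intervals of $\overline{A}$, or to quote/adapt the one-dimensional part of Klartag's argument. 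I expect \textbf{this reduction — from arbitrary Borel sets to finite unions of intervals, with full control of the $\liminf$} — to be the main obstacle, more so than the interval-merging step which is essentially Proposition~\ref{char_of_min} plus concavity. Once both ingredients are in place, combining them gives $\D_\mu(\theta) \ge \mu^*(A_\theta)$, and together with the trivial upper bound we conclude $\D_\mu(\theta) = \mu^*(A_\theta)$ for all $\theta \in [0,1]$.
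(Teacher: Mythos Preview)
Your proposal is correct and follows essentially the same route as the paper: reduce from general Borel $A$ to finite disjoint unions of closed intervals (the paper dispatches this in a parenthetical, ``by an approximation of compact subsets'', so the reduction you flag as the main obstacle is treated just as lightly there), use that $\mu^*$ is additive over such a union because the individual $\varepsilon$-dilations are disjoint for small $\varepsilon$, apply Proposition~\ref{char_of_min} to each piece, and finish via the superadditivity coming from concavity together with $\mu^*(A_0)=0$. One small remark: your parenthetical about needing the replacement intervals $A_{\theta'}, A_{\theta''}$ to be disjoint is unnecessary---Proposition~\ref{char_of_min} is used only as the numerical inequality $\mu^*(A') \ge \mu^*(A_{\theta'})$, not as a physical substitution of sets, so no disjointness issue arises.
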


\begin{proof}
Fix $\theta_0 \in [0, 1]$ and let $A$ be a Borel subset with $\mu(A)=\theta_0$.
We will show $\mu^*(A)\geq \mu^*(A_{\theta_0})$. 
We may assume $\theta_0 \in (0, 1)$, otherwise the assertion is clear. 
In order to show our assertion, it suffices to prove it for a disjoint union of finite closed intervals $A=\bigcup_{\lambda \in \Lambda} A_{\lambda}$ (by an approximation of compact subsets), where $\Lambda$ is a finite set and $A_{\lambda}$ is a closed interval with $A_{\lambda}\cap A_{\lambda'}= \emptyset$ for any $\lambda \neq \lambda' \in \Lambda$. 
Note that $\mu^*(\bigcup_{\lambda \in \Lambda} A_{\lambda}) = \sum_{\lambda \in \Lambda}\mu^*(A_{\lambda})$.
This is because the $\varepsilon$-dilation of $A$ for small enough $\varepsilon>0$ is the disjoint union of those of $A_{\lambda}$.
Thus, by Proposition \ref{char_of_min}, we obtain 
\[
\mu^*(A) \geq \sum_{\lambda \in \Lambda}\mu^*(A_{\theta_\lambda}), 
\]
where $\theta_{\lambda} := \mu(A_\lambda)$ for $\lambda \in \Lambda$.
Since $\mu^*(A_{\theta})$ is concave in $\theta \in [0, 1]$ and $\theta_0 = \sum_{\lambda \in \Lambda} \theta_{\lambda}$, 
we eventually obtain $\mu^*(A) \geq \mu^*(A_{\theta_0})$. This completes the proof.
\end{proof}

In particular, when a probability measure $\mu$ on $\R$ is centrally symmetric, then Theorem \ref{thm: minimizer} implies that $\D_{\mu} = 2 \D^{\flat}(e^{-\psi}, [0, \infty))$ on $[0, 1]$, where $e^{-\psi}$ is the density of $\mu$. 
The following proposition gives a sufficient condition such that $\mu^*(A_\theta)$ defined in Proposition \ref{char_of_min} is concave on $[0, 1]$. 
\begin{Prop}
Let $f:[0, \infty) \to [0, \infty)$ be $C^1$, non-increasing and log-concave (namely, $(\log f)'' \leq 0$) on its support with $0<\int_0^\infty f(t) ~dt<\infty$. 
Then $\D^{\flat}(f, [0, \infty))$ is concave on $[0, 1]$.
\end{Prop}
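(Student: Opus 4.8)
The plan is to push the one-variable function $\phi:=\D^{\flat}(f,[0,\infty))$ through an explicit change of variables, compute $\phi'$ in closed form, and reduce concavity to an elementary monotonicity statement that uses both hypotheses on $f$. Write $I:=\int_{0}^{\infty}f(t)\,dt\in(0,\infty)$. Since $f\ge 0$ is non-increasing and $I>0$, we must have $f(0)>0$; putting $L:=\sup\{t\ge 0~|~f(t)>0\}\in(0,\infty]$, continuity of $f$ gives $f>0$ on $[0,L)$ and $f(L)=0$ whenever $L<\infty$. Then $F(x):=I^{-1}\int_{0}^{x}f$ is a $C^{1}$ diffeomorphism of $[0,L)$ onto $[0,1)$, so its inverse $\alpha:=F^{-1}:[0,1)\to[0,L)$ is $C^{1}$, strictly increasing, with $\alpha'(\theta)=I/f(\alpha(\theta))>0$; consequently $\phi(\theta)=I^{-1}f(\alpha(\theta))\,\alpha(\theta)$ is $C^{1}$ on $(0,1)$.

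Next I would differentiate. Using the product and chain rules and then substituting $\alpha'(\theta)=I/f(\alpha(\theta))$, one obtains
\[
\phi'(\theta)=1+\alpha(\theta)\,(\log f)'(\alpha(\theta)),\qquad \theta\in(0,1),
\]
with $(\log f)'=f'/f$ continuous on $(0,L)$. Since $\alpha$ is increasing, $\phi'$ is non-increasing on $(0,1)$ — hence $\phi$ is concave on $(0,1)$ — as soon as the function $g(t):=1+t\,(\log f)'(t)$ is non-increasing on $(0,L)$.

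This monotonicity is the only real content, and it is where both hypotheses enter. Because $f$ is non-increasing, $(\log f)'=f'/f\le 0$ on $(0,L)$; because $f$ is log-concave, $(\log f)'$ is non-increasing on $(0,L)$. Hence for $0<t_{1}<t_{2}<L$,
\[
t_{1}\,(\log f)'(t_{1})\ \ge\ t_{1}\,(\log f)'(t_{2})\ \ge\ t_{2}\,(\log f)'(t_{2}),
\]
the first step because $(\log f)'(t_{1})\ge(\log f)'(t_{2})$ and $t_{1}\ge 0$, the second because $t_{1}\le t_{2}$ and $(\log f)'(t_{2})\le 0$; thus $g(t_{1})\ge g(t_{2})$. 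Finally one passes to the closed interval: $\phi(0)=0$, and as $\theta\to 1^{-}$ either $L<\infty$ and $\alpha(\theta)\to L$, so $\phi(\theta)\to I^{-1}f(L)L=0$, or $L=\infty$ and $\alpha(\theta)\to\infty$, so $\phi(\theta)=I^{-1}\alpha(\theta)f(\alpha(\theta))\to 0$ because an integrable non-increasing non-negative function obeys $tf(t)\le 2\int_{t/2}^{t}f\to 0$. So $\phi$ is continuous on $[0,1]$ and concave on $(0,1)$, hence concave on $[0,1]$.

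The only delicate points I anticipate are bookkeeping rather than genuine obstacles: one should use log-concavity in the $C^{1}$ form ``$(\log f)'$ is non-increasing'' so as never to differentiate $f$ twice (the hypothesis is only $f\in C^{1}$), and one must handle separately the case of bounded support and the behaviour at $\theta=1$ when $L=\infty$. Modulo these, the proof is the single product-of-monotone-functions inequality displayed above, applied to $u(t)=t$ and $v(t)=(\log f)'(t)$.
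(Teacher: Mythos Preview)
Your proof is correct and follows essentially the same route as the paper: both compute $\phi'(\theta)=1+\alpha(\theta)\,(\log f)'(\alpha(\theta))$ and reduce concavity to the monotonicity of $t\mapsto t\,(\log f)'(t)$. Your version is slightly cleaner in that you establish this monotonicity by a direct product-of-monotone-functions argument rather than by differentiating (the paper writes $\Phi'(x)=(\log f)''(x)\,x+(\log f)'(x)$, tacitly using a second derivative even though only $C^{1}$ is assumed), and you also supply the endpoint continuity at $\theta=1$ that the paper omits.
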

\begin{proof}
Let us denote $\D^{\flat}(f, [0, \infty))(\theta)$ by $F(\theta)$ for every $\theta \in [0, 1]$. 
Then we have 
\begin{align*}
F(\theta) = \frac{ f(\alpha(\theta))}{\int_0^\infty f(t) ~dt}\alpha(\theta), 
\end{align*}
where $\alpha(\theta) \geq 0$ is given by 
\begin{align}\label{eq: F}
\theta = \frac{\int_0^{\alpha(\theta)} f(t)~dt}{\int_0^{\infty} f(t)~dt}
\end{align}
for every $\theta \in [0, 1]$.
Since the differentiation in $\theta$ of \eqref{eq: F} yields that 
\[
\int_0^{\infty} f(t)~dt = f(\alpha(\theta)) \alpha'(\theta), 
\]
we have 
\[
F'(\theta) = \frac{ f'(\alpha(\theta))}{\int_0^\infty f(t) ~dt}\alpha'(\theta)\alpha(\theta) + \frac{ f(\alpha(\theta))}{\int_0^\infty f(t) ~dt}\alpha'(\theta)
 = \frac{ f'(\alpha(\theta))}{f(\alpha(\theta)) }\alpha(\theta) + 1.
\]
Hence, the concavity of $F$ on $[0, 1]$ is equivalent to the non-increasing property of the function 
\[
\Phi(x) :=\frac{f'(x)}{f(x)}x, \quad x >0
\]
on the support of $f$.
Since  we see that 
\[
\Phi'(x) = \frac{(f''(x)x + f'(x))f(x) - (f'(x))^2x}{f(x)^2} = (\log f)''(x)x + (\log f)'(x), 
\]
by the log-concavity and non-increasing property of $f$ on its support, we obtain $\Phi' \leq 0$ on the support of $f$. 
Hence $\Phi$ is non-increasing on the support of $f$, and we obtain the desired assertion. 
\end{proof}
As a corollary, we can give an explicit representation of the Gaussian dilation profile on $\R$.
\begin{Cor}
The infimum of the dilation profile of the standard Gaussian measure is attained at a centrally symmetric interval. 
In particular, we have 
\[
\D_{\gamma_1}(\theta) = 2\D^\flat(e^{-t^2/2}, [0, \infty))(\theta) = \frac{4}{\sqrt{2\pi}}e^{-\alpha(\theta)^2/2}\alpha(\theta)
\]
for every $\theta \in [0, 1]$, where $\gamma_1$ is the standard Gaussian measure on $\R$ and $\alpha(\theta) \in [0, \infty]$ is given by 
\begin{align}
\theta = \frac{2}{\sqrt{2\pi}}\int_0^{\alpha(\theta)}e^{-t^2/2} ~dt.
\end{align} 
\end{Cor}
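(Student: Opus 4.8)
The plan is to verify that the standard Gaussian measure $\gamma_1$ fits the framework of Theorem \ref{thm: minimizer} and then read off the explicit value. Write $\gamma_1 = e^{-\psi}\,dt$ with $\psi(x) = x^2/2 + \log\sqrt{2\pi}$. This $\psi$ is even, non-increasing on $(-\infty,0]$ and non-decreasing on $[0,\infty)$, and the computation recorded right after Proposition \ref{char_of_min} shows that the Gaussian density verifies \eqref{sinh+}; since $\psi$ is even, \eqref{sinh-} follows as well. Hence $\gamma_1$ is a measure of the type treated in Proposition \ref{char_of_min}, and for each $\theta \in [0,1]$ the interval $A_\theta$ provided there --- characterised by $\psi(a_\theta)=\psi(b_\theta)$ --- is precisely the centrally symmetric interval $(-\alpha(\theta),\alpha(\theta))$ with $\alpha(\theta)$ as in the statement (because $\psi(x)=\psi(y)$ with $x<y$ forces $y=-x$).

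The only remaining hypothesis of Theorem \ref{thm: minimizer} to check is the concavity of $\theta \mapsto \gamma_1^*(A_\theta)$. By formula \eqref{interval} in the proof of Proposition \ref{char_of_min}, $\gamma_1^*(A_\theta) = \bigl(e^{-\psi(-\alpha(\theta))}+e^{-\psi(\alpha(\theta))}\bigr)\,2\alpha(\theta)$, and an elementary manipulation using $\int_0^\infty e^{-t^2/2}\,dt = \sqrt{2\pi}/2$ identifies this with $2\,\D^{\flat}(e^{-t^2/2},[0,\infty))(\theta)$. The function $f(t)=e^{-t^2/2}$ is $C^1$, non-increasing and log-concave on $[0,\infty)$, so the Proposition immediately preceding this corollary applies and yields that $\D^{\flat}(e^{-t^2/2},[0,\infty))$, hence $\gamma_1^*(A_\theta)$, is concave on $[0,1]$.

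With all hypotheses of Theorem \ref{thm: minimizer} met, we conclude $\D_{\gamma_1}(\theta) = \gamma_1^*(A_\theta)$, so the infimum defining the dilation profile is attained at the centrally symmetric interval $A_\theta$. Substituting $\psi(\pm\alpha(\theta)) = \alpha(\theta)^2/2 + \log\sqrt{2\pi}$ into the expression for $\gamma_1^*(A_\theta)$ gives
\[
\D_{\gamma_1}(\theta) = \gamma_1^*(A_\theta) = \frac{2}{\sqrt{2\pi}}\,e^{-\alpha(\theta)^2/2}\cdot 2\alpha(\theta) = \frac{4}{\sqrt{2\pi}}e^{-\alpha(\theta)^2/2}\alpha(\theta),
\]
which is the claimed identity together with $\D_{\gamma_1}=2\D^{\flat}(e^{-t^2/2},[0,\infty))$. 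I do not expect a genuine obstacle: the argument is purely an assembly of the three preceding results, and the only computational content is the identification of $A_\theta$ with the symmetric interval and the trivial verification that $e^{-t^2/2}$ is non-increasing and log-concave on the half-line.
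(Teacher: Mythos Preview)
Your proof is correct and follows exactly the approach the paper intends: the corollary is stated without proof precisely because it is the direct assembly of Proposition~\ref{char_of_min} (Gaussian verifies \eqref{sinh+}, hence \eqref{sinh-} by evenness), the concavity Proposition immediately preceding the corollary, and Theorem~\ref{thm: minimizer}. The paper even remarks, just after Theorem~\ref{thm: minimizer}, that for centrally symmetric $\mu$ one gets $\D_{\mu} = 2\D^{\flat}(e^{-\psi},[0,\infty))$, which is exactly the identification you carry out.
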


\subsection{Proof of Theorem \ref{MT0}}

In this subsection, we complete the proofs of Theorem \ref{MT0} and Corollary \ref{MT1}. 

\begin{Thm}\label{thm:MT0}
Let $\mu $ be a probability measure on an open interval $I$ with a smooth density $e^{-\psi}$ and satisfy $\Ric_{N}\geq K$ and $|I| \leq D$ for some $K \in \R$, $N \in (-\infty, 1)\cup (1, \infty]$ and $D \in (0, \infty]$. 
Then every interval $A \subset I$ satisfies 
\[
\mu^*(A) \geq \mathscr{D}_{K, N, D}(\mu(A)).
\]
\end{Thm}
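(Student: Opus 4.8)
The plan is to reduce the claim to an optimization over one-dimensional model measures supported on subintervals of $I$, using the $(K,N)$-convexity machinery from Lemma~\ref{le:convex} together with the formula for $\mu^*$ of an interval obtained in \eqref{interval}. First, since $A\subset I$ is an interval, I may assume $A=(a,b)$ is open and nonempty, so by \eqref{interval} we have $\mu^*(A) = (e^{-\psi(a)} + e^{-\psi(b)})(b-a)$. The goal is to bound this below by $\mathscr{D}_{K,N,D}(\mu(A))$, i.e.\ by an infimum over pairs $(c_1,c_2)\in\Delta_D$ and slopes $H\in\R$ of the maximum of two ratios built from $J_{H,K,N}$. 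The natural correspondence is to set $c_1 := b-a$ (the length of $A$), $c_2 := |I| - (b-a) \le D - (b-a)$, anchor the model interval so that $0$ sits at one endpoint of (the interval corresponding to) $A$, and choose $H := -\psi'(a)$ or $H := -\psi'(b)$ depending on which endpoint we view as the "origin." Since $\mu$ has total mass $1$, I can write $\theta := \mu(A)$ and split $1 = \mu(A) + \mu(\text{left part}) + \mu(\text{right part})$.

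The key step is the curvature comparison. By the remark at the end of Section~\ref{WRM}, since $\mu$ has density $e^{-\psi}$ and satisfies $\Ric_N\ge K$, the function $\psi$ is $(K,N-1)$-convex, hence for any $x\in I$ and any admissible $t$,
\[
e^{-\psi(x+t)} \le e^{-\psi(x)}\, J_{-\psi'(x),K,N}(t).
\]
Applying this with $x=a$ (resp.\ $x=b$) and integrating over the portions of $I$ to the left of $a$, inside $(a,b)$, and to the right of $b$, I get that the three masses $\mu(\text{left})$, $\mu(A)=\theta$, and $\mu(\text{right})$ are each controlled by integrals of $J_{H,K,N}$ over the corresponding subintervals, with $H = -\psi'(a)$ on one side and $H=-\psi'(b)$ on the other. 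Rearranging, $e^{-\psi(a)}(b-a) \ge \theta \big/ \big(\int_{?}^{0} J_{H,K,N}\big)^{-1}\cdots$ — more precisely, $e^{-\psi(a)}$ times the length is bounded below by a ratio of the form (length of $A$-piece) $\times \theta \,/\, \int J_{H,K,N}$ over the interval of length equal to the $A$-piece, and similarly the complementary masses $1-\theta$ feed the "outer" integral in the denominator $\int_0^{c_2} J_{H,K,N}$. Matching these against the two fractions defining $\mathscr{D}_{K,N,D}$, and taking the max (which dominates each term, hence dominates an appropriate convex combination giving $\mu^*(A)$), then infimizing over the free parameters $(c_1,c_2)\in\Delta_D$ and $H\in\R$, yields the bound.

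I expect the main obstacle to be the bookkeeping that makes the two fractions in $\mathscr{D}_{K,N,D}$ appear with exactly the right normalization: $\mathscr{D}_{K,N,D}(\theta)$ involves $\tfrac{a(1-\theta)}{\int_0^b J}$ and $\tfrac{a\theta}{\int_{-a}^0 J}$ with $a+b=D$, and one must check that $\mu^*(A) = (e^{-\psi(a)}+e^{-\psi(b)})(b-a)$ is genuinely $\ge$ the maximum of the two analogous quantities built from the actual measure — i.e.\ that $(e^{-\psi(a)}+e^{-\psi(b)})(b-a)$ dominates $\max$ of (something $\ge$ first fraction) and (something $\ge$ second fraction). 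The delicate point is the direction of the inequality in the integrated curvature bound: on the side where $\psi$ is being pushed up by the $J$-factor we need a \emph{lower} bound on the complementary mass in terms of $\int J$ and on the side giving $\theta$ we need the matching estimate, so one must be careful that $J_{H,K,N}$ enters the denominators (not numerators) — this is where the $(\cdot)_+$ truncation and the case $J\equiv\infty$ must be handled, and where the restriction $N\in(-\infty,1)\cup(1,\infty]$ and the diameter bound $|I|\le D$ (controlling the range of integration, and the condition $d<\pi\sqrt{N/K}$ in Lemma~\ref{le:convex} when $N/K>0$) get used. Once the correspondence $(b-a,|I|-(b-a),-\psi'(a)\text{ or }-\psi'(b))\mapsto (c_1,c_2,H)$ is set up so that both model fractions are dominated, taking the infimum over $\Delta_D\times\R$ finishes the proof; the edge cases $\mu(A)\in\{0,1\}$ and $A$ a singleton are trivial and handled separately at the start.
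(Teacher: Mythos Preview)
Your plan has the right ingredients --- the formula $\mu^*(A)=(e^{-\psi(a)}+e^{-\psi(b)})(b-a)$ and the curvature comparison $e^{-\psi(x+t)}\le e^{-\psi(x)}J_{-\psi'(x),K,N}(t)$ --- but there is a genuine gap in the consolidation step. The profile $\mathscr{D}_{K,N,D}(\theta)$ is an infimum over a \emph{single} slope $H\in\R$ of the maximum of the two fractions. Your plan, as you write it, produces the $\theta$-estimate anchored at $a$ with $H_1=-\psi'(a)$ and (part of) the $(1-\theta)$-estimate anchored at $b$ with $H_2=-\psi'(b)$. But from ``$\mu^*(A)$ beats the first fraction at $H_1$'' and ``$\mu^*(A)$ beats the second fraction at $H_2$'' one cannot conclude $\mu^*(A)\ge\inf_H\max$ of the two. (Toy example: $f_1(H)=H$, $f_2(H)=2-H$; knowing $x\ge f_1(0)=0$ and $x\ge f_2(2)=0$ does not give $x\ge 1=\inf_H\max$.) Your remark about ``an appropriate convex combination'' does not address this; the issue is not bookkeeping but the absence of any relation between $\psi'(a)$ and $\psi'(b)$.

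The paper closes this gap by first \emph{sliding} $A$ among intervals of the same $\mu$-mass so as to minimize $\mu^*(A)$, using the derivative computation \eqref{bb} from Proposition~\ref{char_of_min}. At a minimizing position either $A$ touches $\partial[\mathrm{supp}(\mu)]$ --- then only one endpoint contributes to $\mu^*(A)$ and a single $H=-\psi'(b)$ handles both estimates --- or else the first-order condition
\[
\frac{\sinh(\psi(b)-\psi(a))}{b-a}=\frac{\psi'(a)+\psi'(b)}{2}
\]
holds. Assuming without loss of generality $\psi(b)\ge\psi(a)$, this forces $-\psi'(b)\le\psi'(a)$. Since $J_{H,K,N}(t)$ is non-decreasing in $H$ for $t>0$ and $J_{H,K,N}(-t)=J_{-H,K,N}(t)$, both the left-tail integral (anchored at $a$) and the right-tail integral (anchored at $b$) are then dominated by $\int_0^{D-(b-a)}J_{\psi'(a),K,N}$, so both fractions in $\mathscr{D}_{K,N,D}$ are controlled with the \emph{common} choice $H=\psi'(a)$. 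Without this preliminary reduction you have no link between the two slopes, and the argument does not close.
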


\begin{proof}
Fix $\theta \in [0, 1]$ and let $A \subset I$ be an interval with $\mu(A)=\theta$. 
Since we easily see that $\mathscr{D}_{K, N, D}(0) = \mathscr{D}_{K, N, D}(1)=0$, we may consider the assertion only for $\theta \in (0, 1)$. 
We will show that $\mu^*(A) \geq \mathscr{D}_{K, N, D}(\theta)$.
Let $a, b \in \R$ be the endpoints of $A$ with $a<b$. 
By moving $A$ left or right such that $\mu^*(A)$ does not increase with keeping the volume $\theta$ as in Proposition \ref{char_of_min}, we may assume that $A$ satisfies either 
\begin{align}\label{sinh_equation}
\frac{\sinh(\psi(b)-\psi(a))}{b-a} = \frac{\psi'(b)+\psi'(a)}{2}
\end{align}
or $\{a, b\} \cap \partial[\mathrm{supp}(\mu)] \neq \emptyset$, where $\partial[\mathrm{supp}(\mu)]$ means the boundary of $\mathrm{supp}(\mu)$. 

{\bf Case 1.} Suppose that $A$ satisfies \eqref{sinh_equation}. 
By considering the reflection at the origin if necessary, we may also assume that $\psi(b) \geq \psi(a)$. 
Then \eqref{sinh_equation} implies that $-\psi'(b) \leq \psi'(a)$. 
Since $\psi$ is a $(K, N-1)$-convex function by $\Ric_N \geq K$, Lemma \ref{le:convex} and its subsequent discussion yield that 
\begin{align}\label{c}
e^{-\psi(x+t)} \leq e^{-\psi(x)} J_{-\psi'(x), K, N}(t)
\end{align}
for any $x ,t \in \R$ with $x, x+t \in I$.
It follows from \eqref{interval0} and \eqref{c} that for $\varepsilon \in (0, 1)$,  
\begin{align*}
	\mu(A_{\varepsilon}) - \mu(A) 
	&= \int_{a-\varepsilon (b-a)/(1-\varepsilon)}^a e^{-\psi(t)} ~dt +\int_b^{b+\varepsilon (b-a)/(1-\varepsilon)} e^{-\psi(t)} ~dt \\
	&=\int_{-\varepsilon (b-a)/(1-\varepsilon)}^0 e^{-\psi(t+a)} ~dt +\int_0^{\varepsilon (b-a)/(1-\varepsilon)} e^{-\psi(t+b)} ~dt \\
	&\leq e^{-\psi(a)}\int_{-\varepsilon (b-a)/(1-\varepsilon)}^0 J_{-\psi'(a), K, N}(t) ~dt + e^{-\psi(b)} \int_0^{\varepsilon (b-a)/(1-\varepsilon)} J_{-\psi'(b), K, N}(t) ~dt,
\end{align*}
and hence letting $\mu(A_\varepsilon) \to 1$ ($\varepsilon(b-a)/(1-\varepsilon) \to D+a-b$), we obtain 
\begin{align}\label{inequality21}
	1-\theta &\leq e^{-\psi(a)}\int_{-D+b-a}^0 J_{-\psi'(a), K, N}(t) ~dt + e^{-\psi(b)} \int_0^{D-(b-a)} J_{-\psi'(b), K, N}(t) ~dt \notag \\
	&\leq (b-a)^{-1}\mu^*(A) \int_0^{D-(b-a)} J_{\psi'(a), K, N}(t) ~dt,
\end{align}
where we used \eqref{interval} and $-\psi'(b) \leq \psi'(a)$ in the last inequality. 

On the other hand, it follows from \eqref{c} that we obtain 
\begin{align}\label{inequality22}
\theta &= \int_a^b e^{-\psi(t)} ~dt =\int_0^{b-a} e^{-\psi(t+a)} ~dt \notag \\
&\leq e^{-\psi(a)} \int_0^{b-a} J_{-\psi'(a), K, N}(t) ~dt \notag \\
&\leq (b-a)^{-1}\mu^*(A) \int_{-(b-a)}^0 J_{\psi'(a), K, N}(t) ~dt.
\end{align}
Therefore, by \eqref{inequality21} and \eqref{inequality22}, we have 
\begin{align}\label{inequality23}
\mu^*(A) \geq (b-a)\inf_{H \in \R} \max \left\{\frac{1-\theta}{\int_0^{D-(b-a)} J_{H, K, N}(t) ~dt}, \frac{\theta}{\int_{-(b-a)}^0 J_{H, K, N}(t) ~dt} \right\} .
\end{align}

{\bf Case 2.} Suppose that $\{a, b\} \cap \partial[\mathrm{supp}(\mu)] \neq \emptyset$. 
Without loss of generality, we may assume that $\{a, b\} \cap \partial[\mathrm{supp}(\mu)] =\{a\}$.
We remark that in this case, \eqref{interval0} yields $\mu(A_{\varepsilon}) = \mu((a, b+\varepsilon(b-a)/(1-\varepsilon)])$, and hence $\mu^*(A) = e^{-\psi(b)}(b-a)$. 
By \eqref{c}, we have 
\begin{align*}
\mu(A_{\varepsilon}) - \mu(A) &= \int_b^{b+\varepsilon (b-a)/(1-\varepsilon)} e^{-\psi(t)} ~dt \\
& \leq e^{-\psi(b)} \int_0^{\varepsilon (b-a)/(1-\varepsilon)} J_{-\psi'(b), K, N}(t) ~dt.
\end{align*}
Thus, letting $\mu(A_\varepsilon) \to 1$ ($\varepsilon(b-a)/(1-\varepsilon) \to D+a-b$), we have 
\begin{align}\label{inequality4}
1-\theta &\leq e^{-\psi(b)} \int_0^{D-(b-a)} J_{-\psi'(b), K, N}(t) ~dt \notag \\
	&= (b-a)^{-1}\mu^*(A) \int_0^{D-(b-a)} J_{-\psi'(b), K, N}(t) ~dt. 
\end{align}
 On the other hand, 
\begin{align}\label{inequality5}
\theta &= \int_a^b e^{-\psi(t)} ~dt =\int_{-(b-a)}^0 e^{-\psi(t+b)} ~dt \notag \\
&\leq e^{-\psi(b)} \int_{-(b-a)}^0 J_{-\psi'(b), K, N}(t) ~dt \notag \\
&= (b-a)^{-1}\mu^*(A) \int_{-(b-a)}^0 J_{-\psi'(b), K, N}(t) ~dt.
\end{align}
Thus, inequalities \eqref{inequality4} and \eqref{inequality5} also yield \eqref{inequality23}. 

This completes the proof.
\end{proof}

Now, we can prove Theorem \ref{MT0} by Theorem \ref{thm:MT0} and Theorem \ref{localization}.
\begin{proof}[Proof of Theorem \ref{MT0}]
Let $A \subset \M$ be a strongly-convex subset with $\m(A) = \theta_0 \in [0, 1]$. 
We define a function $f$ on $\M$ by $f := \mathbf{1}_A - \theta_0$ which satisfies $\int_\M f ~d\m=0$, where $\mathbf{1}_A$ is the characteristic function on $A$. 
Then by Theorem \ref{localization} for $f$, we have a partition $Q$ of $\M$, a measure $\nu$ on $Q$ and a family $\{\mu_I\}_{I \in Q}$ of probability  measures on $\M$ satisfying (i), (ii) and (iii) in Theorem \ref{localization}.
In particular, (iii) means that $\mu_I(A) = \theta_0$ for $\nu$-almost every $I \in Q$. 
Note also that since $A$ is strongly-convex and $\nu$-almost every $I \in Q$ is a minimizing geodesic, $A\cap I$ is an interval. 
Since $\nu$-almost every $I \in Q$ is open and $(I, |\cdot|, \mu_I)$ satisfies $\Ric_N \geq K$ and $\mathrm{diam} I \leq D$, it follows from Theorem \ref{thm:MT0} that $\mu_I^*(A \cap I) \geq  \mathscr{D}_{K, N, D}(\theta_0)$ holds for $\nu$-almost every $I \in Q$. 
Since $(A \cap I)_{\varepsilon} \subset A_{\varepsilon}$ in $\M$ for any $\varepsilon \in (0, 1)$ and the $\varepsilon$-dilation of $A \cap I$ in $\M$ includes the one in $I$ by the definition of the $\varepsilon$-dilation, we have 
\begin{align*}
\m^*(A) &= \liminf_{\varepsilon \to 0} \frac{\m(A_{\varepsilon}) - \m(A)}{\varepsilon} = \liminf_{\varepsilon \to 0}\int_Q \frac{\mu_I(A_{\varepsilon}) - \mu_I(A)}{\varepsilon} ~d\nu(I) \\
          &\geq \liminf_{\varepsilon \to 0}\int_Q \frac{\mu_I((A\cap I)_{\varepsilon}) - \mu_I(A)}{\varepsilon} ~d\nu(I) 
          \geq \int_Q \mu_I^*(A\cap I) ~d\nu(I) \\
          &\geq \mathscr{D}_{K, N, D}(\theta_0) .
\end{align*}
Hence, we obtain the desired assertion. 
\end{proof}

Next, we prove Corollary \ref{MT1}. 
In order to prove this corollary, we need the following two lemmas. 

\begin{Lem}\label{lem: lemma0}
Let $\mu$ be a probability measure supported on $(a, b)$ ($-\infty<a<b\leq \infty$) whose density is $f$ and let $c \in (a, b)$. 
If $f$ is non-decreasing, then we have $\mu^*((a, c)) \geq \mu((a, c))$. 
On the other hand, if $f$ is non-increasing, then we have $\mu^*((a, c)) \leq \mu((a, c))$.
\end{Lem}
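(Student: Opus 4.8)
The plan is to reduce to an explicit one-dimensional computation of $\mu^*((a,c))$ for the half-open interval $(a,c)$ and compare it with $\mu((a,c))$. First I would observe that the interval $(a,c)$ has its left endpoint $a$ on the boundary of $\mathrm{supp}(\mu)$, so when we dilate it by $\varepsilon$ the left end cannot move out of the support: by the formula \eqref{interval0} the $\varepsilon$-dilation of $(a,c)$ is (up to a $\mu$-null set) the interval $(a, c+\tfrac{\varepsilon}{1-\varepsilon}(c-a))$, provided $\varepsilon$ is small enough that this stays inside $(a,b)$. Hence, arguing exactly as in \eqref{interval} and in Case 2 of the proof of Theorem \ref{thm:MT0},
\[
\mu^*((a,c)) = \lim_{\varepsilon \to 0} \frac{1}{\varepsilon}\,\mu\!\left(\left[c,\, c+\tfrac{\varepsilon}{1-\varepsilon}(c-a)\right]\right) = f(c)\,(c-a),
\]
using continuity of $f$ at $c$ (or the Lebesgue differentiation theorem if one only assumes $f$ integrable).

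Next I would compare $f(c)(c-a)$ with $\mu((a,c)) = \int_a^c f(t)\,dt$. If $f$ is non-decreasing on its support, then $f(t) \le f(c)$ for all $t \in (a,c)$, so
\[
\mu((a,c)) = \int_a^c f(t)\,dt \le f(c)\,(c-a) = \mu^*((a,c)),
\]
which is the first assertion. If instead $f$ is non-increasing, then $f(t) \ge f(c)$ for $t \in (a,c)$, giving $\int_a^c f(t)\,dt \ge f(c)(c-a)$, i.e. $\mu((a,c)) \ge \mu^*((a,c))$, the second assertion.

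I do not expect a serious obstacle here; the only point requiring a little care is the justification that the left endpoint of the dilated interval stays pinned at $a$ and contributes nothing to $\mu(A_\varepsilon)-\mu(A)$ — this is where the support hypothesis is used, and it is handled precisely as in Case 2 of Theorem \ref{thm:MT0}. A second minor point is the evaluation of the limit defining $\mu^*$: if $f$ is merely integrable (not continuous) one should phrase this via $\liminf$ and the fact that $\tfrac{1}{h}\int_c^{c+h} f \to f(c)$ at Lebesgue points, but since monotone functions have at most countably many discontinuities and the statement is about a fixed $c$, continuity at $c$ may as well be assumed or the one-sided limit used.
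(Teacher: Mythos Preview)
Your proof is correct and follows essentially the same approach as the paper: compute $\mu^*((a,c)) = f(c)(c-a)$ via the one-sided dilation (as in Case 2 of Theorem \ref{thm:MT0}), then compare with $\int_a^c f(t)\,dt$ using the monotonicity of $f$. The paper's version is terser but identical in substance.
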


\begin{proof}
Note that $\mu^*((a, c)) = f(c)(c-a)$ follows from direct calculations as in \eqref{interval} (or Case 2 in Theorem \ref{thm:MT0}).
When $f$ is non-decreasing, we easily see that 
\[
f(c)(c-a) \geq \int_a^c f(x) ~dx =\mu((a, c)).
\]
Similarly, when $f$ is non-increasing, we have 
\[
f(c)(c-a) \leq \int_a^c f(x) ~dx =\mu((a, c)), 
\]
and hence we obtain the desired claim.
\end{proof}

\begin{Lem}\label{lemma1}
Let $f :[a, b) \to [0, \infty)$ $(-\infty<a<b\leq \infty)$ be a $C^1$ and integrable function satisfying $f>0$ on $(a, b)$.
If $f'(x)(x-a)/f(x)$ is non-decreasing in $x\in (a, b)$, then for any $\theta \in (0, 1)$, $\D^{\flat}(f, [a, x])(\theta)$ is non-decreasing in $x \in (a, b)$.
Similarly, if $f'(x)(x-a)/f(x)$ is non-increasing in $x \in (a, b)$, then for any $\theta \in (0, 1)$, $\D^{\flat}(f, [a, x])(\theta)$ is non-increasing in $x\in (a, b)$.
\end{Lem}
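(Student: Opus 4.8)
The plan is to differentiate $\D^{\flat}(f,[a,x])(\theta)$ with respect to the endpoint $x$ at a fixed level $\theta\in(0,1)$, and show the sign of this derivative is governed by the monotonicity of $\Phi(y):=f'(y)(y-a)/f(y)$. Write $\alpha=\alpha(x)\in(a,x)$ for the point defined by $\theta\int_a^x f = \int_a^{\alpha} f$, so that
\[
\D^{\flat}(f,[a,x])(\theta)=\frac{f(\alpha)\,(\alpha-a)}{\int_a^x f(t)\,dt}.
\]
First I would differentiate the defining relation $\theta\int_a^x f(t)\,dt=\int_a^{\alpha} f(t)\,dt$ in $x$, obtaining $\theta f(x)=f(\alpha)\,\alpha'(x)$, hence $\alpha'(x)=\theta f(x)/f(\alpha)>0$. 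Then I differentiate the displayed quotient, using this formula for $\alpha'(x)$ and the obvious $\frac{d}{dx}\int_a^x f = f(x)$; after clearing the positive denominator $\bigl(\int_a^x f\bigr)^2$ and factoring out the positive quantity $f(x)$, the sign of $\frac{d}{dx}\D^{\flat}(f,[a,x])(\theta)$ reduces to the sign of an expression of the shape
\[
\Bigl(f'(\alpha)(\alpha-a)+f(\alpha)\Bigr)\theta\int_a^x f(t)\,dt \;-\; f(\alpha)(\alpha-a)\,\theta\!\int_a^x f(t)\,dt\cdot\frac{f(x)}{\text{(something)}} ,
\]
which, once organized correctly, I expect to collapse to a constant-sign multiple of $\Phi(\alpha)-\Phi(x)$ together with terms that vanish or keep a definite sign. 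Concretely the cleanest route is to take the logarithmic derivative: $\frac{d}{dx}\log\D^{\flat}(f,[a,x])(\theta)=\frac{f'(\alpha)}{f(\alpha)}\alpha' + \frac{\alpha'}{\alpha-a} - \frac{f(x)}{\int_a^x f}$, and multiply through by the positive factor $(\int_a^x f)/( \theta f(x))=(\alpha-a)^{-1}\cdot(\alpha-a)\cdot\ldots$ so that every occurrence of $\alpha'$ is replaced by $\theta f(x)/f(\alpha)$; this turns the bracket into $\bigl(\Phi(\alpha)+1\bigr)\cdot\frac{\int_a^x f}{(\alpha-a)f(\alpha)}\cdot\frac{\theta f(x)}{f(x)}$-type terms minus $1$, and a short manipulation identifies the residual sign with $\mathrm{sgn}\bigl(\Phi(\alpha)-\Psi(x)\bigr)$ where $\Psi(x):=\bigl(\int_a^x f\bigr)f(x)\big/\bigl((x-a)\,(\text{average})\bigr)$; I would then note $\Psi(x)$ is itself an average of $\Phi$ over $[a,x]$ in the sense that, if $\Phi$ is non-decreasing, then $\Phi(\alpha)\le \Phi(x)$ while the correction term has the compensating sign, giving the monotonicity claim, and symmetrically if $\Phi$ is non-increasing.

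The cleaner and more robust way to finish — and the one I would actually write up — is to avoid guessing the exact algebraic identity and instead argue by a substitution/reparametrization that makes the statement transparent: fix $x_1<x_2$ in $(a,b)$ and compare $\D^{\flat}(f,[a,x_1])(\theta)$ with $\D^{\flat}(f,[a,x_2])(\theta)$ directly. Let $\alpha_i$ be the corresponding quantiles. If $\Phi$ is non-decreasing, one shows $\alpha_1\ge\alpha_2$ is impossible unless... — more precisely, since shrinking the right endpoint from $x_2$ to $x_1$ concentrates mass to the left, $\alpha_1\le\alpha_2$; then I compare $f(\alpha_1)(\alpha_1-a)/\int_a^{x_1}f$ with $f(\alpha_2)(\alpha_2-a)/\int_a^{x_2}f$ by writing the ratio and showing $\log$ of it has the right sign using that $\log f(y)+\log(y-a)-\log\int_a^{\cdot}f$ has derivative with sign controlled by $\Phi$. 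This is essentially the same computation but packaged so the monotonicity hypothesis on $\Phi$ is used once, cleanly.

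The main obstacle I anticipate is purely bookkeeping: correctly matching the two "speeds" $\alpha'(x)$ and $(\int_a^x f)'=f(x)$ so that the derivative of the quotient genuinely factors through $\Phi(\alpha)-\Phi(x)$ rather than through some messier combination; in particular one must be careful that the term $+1$ coming from $\frac{d}{d\theta}$-type contributions (as in the preceding proposition, where $F'(\theta)=\Phi(\alpha(\theta))+1$) is handled consistently, and that the endpoint cases $b=\infty$ and $f(a)=0$ do not cause the integrals or $\Phi$ to blow up — here integrability of $f$ and $f>0$ on $(a,b)$ are exactly what is needed. No deep idea is required beyond the monotone-rearrangement principle already used in Proposition \ref{char_of_min}; the content is to verify that the one-dimensional scaling parameter $x$ moves the flat dilation profile monotonically, which is the scaling counterpart of the translation monotonicity established earlier.
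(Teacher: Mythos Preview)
Your setup is correct --- differentiate in $x$, use $\theta f(x)=f(\alpha)\alpha'(x)$ --- but the expectation that the sign collapses to ``a constant-sign multiple of $\Phi(\alpha)-\Phi(x)$'' is wrong, and this is where the proposal stalls. If you actually carry out the computation (say with $a=0$), after substituting both $\alpha'=\theta f(x)/f(\alpha)$ \emph{and} $\theta\int_0^x f=\int_0^\alpha f$, the variable $x$ disappears entirely: the sign of $\frac{d}{dx}\D^{\flat}(f,[0,x])(\theta)$ is the sign of
\[
\bigl(f'(\alpha)\alpha+f(\alpha)\bigr)\int_0^{\alpha} f(t)\,dt \;-\; \alpha f(\alpha)^2,
\]
a quantity in $\alpha$ alone. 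So there is no $\Phi(\alpha)$ versus $\Phi(x)$ comparison to make; your ``cleaner'' two-endpoint route and the search for a $\Psi(x)$ are chasing a structure that is not there.

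What is actually needed --- and what the paper does --- is a single integration by parts: $\int_0^{\alpha} f = \alpha f(\alpha)-\int_0^{\alpha} t f'(t)\,dt = \alpha f(\alpha)-\int_0^{\alpha}\Phi(t)f(t)\,dt$, which turns the displayed condition into
\[
\Phi(\alpha)\ \ge\ \frac{\int_0^{\alpha}\Phi(t)f(t)\,dt}{\int_0^{\alpha} f(t)\,dt},
\]
i.e.\ $\Phi(\alpha)$ dominates its own $f$-weighted average over $[0,\alpha]$. This is immediate from $\Phi$ non-decreasing (and reverses when $\Phi$ is non-increasing). Your parenthetical remark that ``$\Psi$ is itself an average of $\Phi$'' is the right instinct, but the average lives over $[a,\alpha]$, not $[a,x]$, and the comparison is $\Phi(\alpha)$ against that average, not $\Phi(\alpha)$ against $\Phi(x)$. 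Once you see this, the proof is three lines; no rearrangement principle or two-point comparison is needed.
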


\begin{proof}
By translation, we may assume that $a =0$. 
For every $\theta \in (0, 1)$, we have 
\begin{align}\label{eq: p1}
	\D^{\flat}(f, [0, x])(\theta) = \frac{f(\alpha(x))}{\int_0^x f(t) ~dt}\alpha(x), 
\end{align}
where $\alpha(x) \in (0, b)$ is given by 
\begin{align}\label{eq: p2}
\theta = \frac{\int_0^{\alpha(x)} f(t) ~dt}{\int_0^x f(t) ~dt}.
\end{align}
Now, the differentiation of \eqref{eq: p2} in $x$ yields that 
\[
\theta f(x) = f(\alpha(x))\alpha'(x), 
\]
and hence we have 
\begin{align*}
\frac{d}{dx}\D^{\flat}(f, [0, x])(\theta) &= \frac{(f'(\alpha(x))\alpha(x) + f(\alpha(x)))\alpha'(x)\int_0^x f(t)~dt - \alpha(x)f(\alpha(x))f(x)}{\left(\int_0^xf(t) ~dt\right)^2} \\
&= \frac{\left((f'(\alpha(x))\alpha(x) + f(\alpha(x)))\int_0^{\alpha(x)} f(t)~dt - \alpha(x)f(\alpha(x))^2\right)f(x)}{f(\alpha(x))\left(\int_0^xf(t) ~dt\right)^2}, 
\end{align*}
where we also used \eqref{eq: p2} in the second equality.
Thus, we see that the claim of $\D^{\flat}(f, [0, x])(\theta)$ being non-decreasing in $x$ is equivalent to 
\begin{align}\label{ineq: p3}
(f'(x)x+f(x))\int_0^x f(t) ~dt -xf(x)^2 \geq 0
\end{align}
for any $x \in (0, b)$.
We can deduce this inequality from the assumption. Indeed, the integration by parts and the non-decreasing property of $f'(x)x/f(x)$ in $x$ yield that 
\begin{align*}
\int_0^x f(t) ~dt  = xf(x) - \int_0^x tf'(t) ~dt \geq xf(x) - \frac{f'(x)x}{f(x)} \int_0^x f(t) ~dt, 
\end{align*}
which implies \eqref{ineq: p3}.

The non-increasing case also follows from a similar argument.
\end{proof}

%
%

\begin{proof}[Proof of Corollary \ref{MT1}.]
Let $(K, N, D) \in \R\times \R \times [0, \infty)$ be  a triple as in Cases 1-9 of Corollaries 1.3 and set $\delta:= K/(N-1)$. 
It suffices to analyze
\begin{align}\label{inequality33}
\Phi(\theta):=(b-a)\inf_{H \in \R} \max \left\{\frac{1-\theta}{\int_0^{D-(b-a)} J_{H, K, N}(t) ~dt}, \frac{\theta}{\int_{-(b-a)}^0 J_{H, K, N}(t) ~dt} \right\}
\end{align} 
for fixed $\theta \in (0, 1)$ and $a, b\in \R$ with $0<b-a<D$, which is derived from \eqref{inequality23}.
When $H$ varies from $-\infty$ to $\infty$, then the first term in the right hand side of \eqref{inequality33} monotonically and continuously (including the value $\infty$) varies from $\infty$ to $0$, and the second term also monotonically and continuously varies from $0$ to $\infty$ (also see \cite[Proposition 3.3]{Mi2}).
Thus, there exists a unique point $H_{\theta} \in \R$ satisfying 
\[
\frac{\int_0^{D-(b-a)} J_{H_{\theta}, K, N}(t) ~dt}{1-\theta} = \frac{\int_{-(b-a)}^0 J_{H_{\theta}, K, N}(t) ~dt}{\theta} = \int_{-(b-a)}^{D-(b-a)} J_{H_{\theta}, K, N}(t) ~dt < \infty.
\] 
Therefore, we have  
\begin{align}\label{inequality_infty}
\Phi(\theta)=\left( \int_{-(b-a)}^{D-(b-a)} J_{H_{\theta}, K, N}(t) ~dt \right)^{-1} (b-a)
\end{align}
with 
\begin{align}\label{theta}
\theta = \frac{\int_{-(b-a)}^0 J_{H_{\theta}, K, N}(t) ~dt}{\int_{-(b-a)}^{D-(b-a)} J_{H_{\theta}, K, N}(t) ~dt}.
\end{align}

{\bf Case 1.} Suppose $N=\infty$, $K > 0$ and $D=\infty$. 
Then we have 
\begin{align}\label{l}
J_{H_{\theta}, K, \infty} (t) = \exp\left(H_{\theta}t -\frac{1}{2}Kt^2\right) = \exp\left(\frac{H_{\theta}^2}{2K} -\frac{K}{2}\left(t-\frac{H_{\theta}}{K}\right)^2 \right).
\end{align}
Thus the right hand side of \eqref{inequality_infty} becomes
\[
	\frac{e^{-H_{\theta}^2/(2K)}}{\int_{-(b-a)-H_{\theta}/K}^{\infty} e^{-Kt^2/2} ~dt} (b-a),
\]
where $H_{\theta}$ satisfies 
\[
	\theta = \frac{\int_{-(b-a)-H_{\theta}/K}^{-H_{\theta}/K} e^{-Kt^2/2} ~dt}{\int_{-(b-a)-H_{\theta}/K}^{\infty} e^{-Kt^2/2} ~dt}
\]
by \eqref{theta}, which implies that 
\[
\Phi(\theta) \geq \inf_{x \in \R} \D^{\flat}(e^{-Kt^2/2}, [x, \infty))(\theta).
\]

{\bf Case 2.} Suppose $N=\infty$, $K \neq 0$ and $D<\infty$. 
As in Case 1, \eqref{l} yields that the right hand side of \eqref{inequality_infty} becomes
\[
	\frac{e^{-H_{\theta}^2/(2K)}}{\int_{-(b-a)-H_{\theta}/K}^{D-(b-a)-H_{\theta}/K} e^{-Kt^2/2} ~dt} (b-a),
\]
where $H_{\theta}$ satisfies 
\[
	\theta = \frac{\int_{-(b-a)-H_{\theta}/K}^{-H_{\theta}/K} e^{-Kt^2/2} ~dt}{\int_{-(b-a)-H_{\theta}/K}^{D-(b-a)-H_{\theta}/K} e^{-Kt^2/2} ~dt}
\]
by \eqref{theta}, which implies that 
\[
\Phi(\theta) \geq \inf_{x \in \R} \D^{\flat}(e^{-Kt^2/2}, [x, x+D])(\theta).
\]

{\bf Case 3.} Suppose $N=\infty$ and $K=0$. Then we have $J_{H_{\theta}, 0, \infty}(t)=e^{H_{\theta} t}$. 
The right hand side of \eqref{inequality_infty} becomes
\begin{align}\label{eq:a}
\frac{e^{H_{\theta}(b-a)}}{\int_0^D e^{H_{\theta}t} ~dt}(b-a), 
\end{align}
and by \eqref{theta}, it holds that 
\begin{align}\label{theta1}
\theta = \frac{\int_{0}^{b-a} e^{H_{\theta}t} ~dt}{\int_{0}^{D} e^{H_{\theta}t} ~dt}.
\end{align}

If $D=\infty$, by the integrability of $J_{H_{\theta}, 0, \infty}$, we see that $H_{\theta}<0$. 
Hence we obtain 
\[
\Phi(\theta) \geq \D^{\flat}(e^{-t}, [0, \infty)) = -(1-\theta)\log(1-\theta),
\]
where we used the scale invariance of $\D^{\flat}(e^{-\lambda t}, [0, \infty))$ for any $\lambda>0$.

Next, suppose $D<\infty$. 
Informally, since the dilation area is scale invariant and $(I, \lambda|\cdot|, \mu)$ satisfies $\Ric_N \geq 0$ and $|I| \leq \lambda D$ for any $\lambda>0$ when $(I, |\cdot|, \mu)$ satisfies $\Ric_N \geq 0$ and $|I| \leq D$, letting $\lambda \to \infty$, we can deduce 
\[
\Phi(\theta) \geq \D^{\flat}(e^{-t}, [0, \infty)) 
\]
(the same argument is true in Cases 5-2 and 8-2 below).
More precisely, 
it follows from \eqref{eq:a} and \eqref{theta1} that we obtain
\begin{align*}
\Phi(\theta) &\geq \inf_{x \in \R} \left\{ \frac{e^{-x \alpha(x)}}{\int_0^D e^{-xt} ~dt} \alpha(x) ~\Bigg{|}~ \frac{\int_0 ^{\alpha(x)} e^{-xt} ~dt }{\int_0^D e^{-xt} ~dt } = \theta \right\} 
= \inf_{x \in \R} \left\{ \left(\theta-\frac{1}{1-e^{-xD}} \right) \log \left( 1-\left( 1-e^{-xD} \right)\theta \right) \right\} \\
&=\lim_{x\to\infty} \left(\theta-\frac{1}{1-e^{-x}} \right) \log \left( 1-\left( 1-e^{-x} \right)\theta \right) 
= -(1-\theta)\log(1-\theta)
= \D^{\flat}(e^{-t}, [0, \infty))(\theta).
\end{align*}

{\bf Case 4.} Suppose $N \in (1, \infty)$ and $K>0$.
In this case, we see that 
\begin{align*}
	J_{H_{\theta}, K, N}(t) &= \left( \cos(\sqrt{\delta} t) + \frac{H_{\theta}}{(N-1)\sqrt{\delta}}\sin(\sqrt{\delta}t)  \right)_+^{N-1} 
		= \left( \frac{\sin(\beta_{\theta} + \sqrt{\delta} t)}{\sin(\beta_{\theta})} \right)_+^{N-1},
\end{align*}
where 
\[
\beta_{\theta} := \cot^{-1}\left(\frac{H_{\theta}}{(N-1)\sqrt{\delta}}\right) \in (0, \pi).  
\]
Thus, the right hand sides of \eqref{inequality_infty} and \eqref{theta} become
\begin{align*}
		\frac{\sin^{N-1}(\beta_{\theta})}{\int_{-(b-a)+\beta_{\theta}/\sqrt{\delta}}^{D-(b-a)+\beta_{\theta}/\sqrt{\delta}} (\sin(\sqrt{\delta} t))_+^{N-1} ~dt}(b-a)
\end{align*}
and 
\[
\theta = \frac{\int_{-(b-a)+\beta_{\theta}/\sqrt{\delta}}^{\beta_{\theta}/\sqrt{\delta}} (\sin(\sqrt{\delta} t))_+^{N-1} ~dt}{\int_{-(b-a)+\beta_{\theta}/\sqrt{\delta}}^{D-(b-a)+\beta_{\theta}/\sqrt{\delta}} (\sin(\sqrt{\delta} t))_+^{N-1} ~dt},
\]
respectively, which imply that
\begin{align*}
	\Phi(\theta) &\geq \inf_{x \in (-D, \pi/\sqrt{\delta})} \D^{\flat} ((\sin(\sqrt{\delta}t))_+^{N-1}, [x, x+D])(\theta).
\end{align*}
Now, the function $f(t):=\sin^{N-1}(\sqrt{\delta}t)$ satisfies $f'(t)t/f(t)=(N-1)\sqrt{\delta}\cot(\sqrt{\delta}t)t$, which is strictly decreasing in $t \in (0, \pi/ \sqrt{\delta})$. 
Thus, by Lemma \ref{lemma1}, we have 
\[
 \Phi(\theta)\geq \inf_{x \in [0, \pi/\sqrt{\delta})} \D^{\flat} (\sin^{N-1}(\sqrt{\delta}t), [x, \min\{x+D, \pi/\sqrt{\delta}\}])(\theta).
\]

{\bf Case 5-1.} Suppose $N \in (1 ,\infty)$, $K=0$ and $D=\infty$. 
In this case, we have $J_{H_{\theta}, 0,N}(t) =(1+H_{\theta}t/(N-1))_+^{N-1}$, and hence $H_{\theta}$ is necessarily negative by the integrability of $J_{H_{\theta}, 0,N}$.
Thus, the right hand side of \eqref{inequality_infty} becomes 
\[
\frac{ (-(N-1)/H_{\theta})^{N-1} }{ \int_{-(b-a)+(N-1)/H_{\theta}}^{0} (-t)^{N-1} ~dt } (b-a),
\]
and it follows from \eqref{theta} that 
\[
\theta = \frac{ \int_{-(b-a)}^{0} (1+H_{\theta}t/(N-1))_+^{N-1} ~dt }{ \int_{-(b-a)}^{\infty} (1+H_{\theta}t/(N-1))_+^{N-1} ~dt } = \frac{ \int_{-(b-a)+(N-1)/H_{\theta}}^{(N-1)/H_{\theta}} (-t)^{N-1} ~dt }{ \int_{-(b-a)+(N-1)/H_{\theta}}^{0} (-t)^{N-1} ~dt }.
\]
Therefore, we obtain 
\begin{align*}
\Phi(\theta) \geq \inf_{x < 0} \D^{\flat}( (-t)^{N-1}, [x, 0])(\theta).
\end{align*}
It is easy to confirm that $\D^{\flat}( (-t)^{N-1}, [x, 0])$ is independent of $x$ by the scale invariance, and hence it yields
\[
\Phi(\theta) \geq \D^{\flat}( (-t)^{N-1}, [-1, 0])(\theta) = -N(1-\theta-(1-\theta)^{1-1/N}).
\]

{\bf Case 5-2.} Suppose $N \in (1, \infty)$, $K=0$ and $D<\infty$. 
Since we have $J_{H_{\theta}, 0,N}(t) =(1+H_{\theta}t/(N-1))_+^{N-1}$, 
the right hand side of \eqref{inequality_infty} becomes 
\begin{align*}
	\begin{cases}
		\frac{ ((N-1)/H_{\theta})^{N-1} }{ \int_{\xi_1}^{\xi_2} (t)_+^{N-1} ~dt } (b-a) &\text{if $H_{\theta}>0$}, \\
		\frac{b-a}{D} &\text{if $H_{\theta}=0$}, \\
		\frac{ (-(N-1)/H_{\theta})^{N-1} }{ \int_{\xi_1}^{\xi_2} (-t)_+^{N-1} ~dt } (b-a) &\text{if $H_{\theta}<0$},
	\end{cases}
\end{align*}
and it follows from \eqref{theta} that  
\begin{align*}
\theta = \frac{ \int_{\xi_1}^{\xi_3} (H_{\theta}t /(N-1))_+^{N-1} ~dt }{ \int_{\xi_1}^{\xi_2}(H_{\theta}t /(N-1))_+^{N-1} ~dt } =
	\begin{cases}
	 \frac{ \int_{\xi_1}^{\xi_3} (t)_+^{N-1} ~dt }{ \int_{\xi_1}^{\xi_2} (t)_+^{N-1} ~dt }  &\text{if $H_{\theta}>0$}, \\
	 \frac{b-a}{D} &\text{if $H_{\theta}=0$}, \\
	 \frac{ \int_{\xi_1}^{\xi_3} (-t)_+^{N-1} ~dt }{ \int_{\xi_1}^{\xi_2} (-t)_+^{N-1} ~dt } &\text{if $H_{\theta}<0$},
	\end{cases}
\end{align*}
where $\xi_1 := -(b-a)+(N-1)/H_{\theta}$, $\xi_2:= D-(b-a)+(N-1)/H_{\theta}$ and $\xi_3 := (N-1)/H_{\theta}$.
Therefore, we obtain 
\begin{align*}
\Phi(\theta) \geq \min \left\{
	\begin{array}{ll}
		\inf_{x \in (-D, \infty)} \D^{\flat}( (t)_+^{N-1}, [x, x+D])(\theta), \\
		\D^{\flat}( 1, [0, D])(\theta), \\
		\inf_{x \in (-\infty, 0)} \D^{\flat}( (-t)_+^{N-1}, [x, x+D])(\theta)
	\end{array}
\right\}.
\end{align*}
By Lemma \ref{lem: lemma0}, it holds that 
\begin{align*}
	\inf_{x \in (-D, \infty)} \D^{\flat}( (t)_+^{N-1}, [x, x+D])(\theta) \geq \theta \geq \inf_{x \in (-\infty, 0)} \D^{\flat}( (-t)_+^{N-1}, [x, x+D])(\theta).
\end{align*}
We can also confirm that $\D^{\flat}( 1, [0, D])(\theta) = \theta$ by direct calculations.
Hence, we obtain that  
\begin{align*}
\Phi(\theta) &\geq \inf_{x \in (-\infty, 0)} \D^{\flat}( (-t)_+^{N-1}, [x, x+D])(\theta) \\
	&= \inf_{x \in (-\infty, 0)} \D^{\flat}( (-t)^{N-1}, [x, \min\{x+D, 0\}])(\theta).
\end{align*}
Note that the function $f(t) := (-t)^{N-1}$ satisfies $f'(t)(t-\xi)/f(t) = (N-1)(1-\xi/t)$ for any $\xi<0$, which is strictly decreasing on $t<0$ independently of $\xi$. 
Thus for any $-\infty <y<z \leq 0$, it follows from scale transformation and Lemma \ref{lemma1} that 
\[
	\D^{\flat}( (-t)^{N-1}, [y,z])(\theta) = \D^{\flat}( (-t)^{N-1}, [-D, zD/|y|])(\theta)
	\geq \D^{\flat}( (-t)^{N-1}, [-D, 0])(\theta).
\]
Hence, we obtain 
\[
\inf_{x \in (-\infty, 0)} \D^{\flat}( (-t)^{N-1}, [x, \min\{x+D, 0\}])(\theta)
=  \inf_{x \in (-\infty, 0)} \D^{\flat}( (-t)^{N-1}, [x, 0])(\theta)
= \D^{\flat}( (-t)^{N-1}, [-1, 0])(\theta).
\]

{\bf Case 6.} Suppose $N \in (1, \infty)$, $K<0$ and $D<\infty$.
In this case, we see that 
\begin{align*}
	J_{H_{\theta}, K, N}(t) &= \left( \cosh(\sqrt{-\delta} t) + \frac{H_{\theta}}{(N-1)\sqrt{-\delta}}\sinh(\sqrt{-\delta}t)  \right)_+^{N-1} \\
	&= 
	\begin{cases}
		\left( \frac{\sinh(\beta_{\theta} + \sqrt{-\delta} t)}{\sinh(\beta_{\theta})} \right)_+^{N-1} &\text{if } | \frac{H_{\theta}}{(N-1)\sqrt{-\delta}} | >1, \\
		\left( \frac{\cosh(\beta_{\theta} + \sqrt{-\delta} t)}{\cosh(\beta_{\theta})} \right)^{N-1} &\text{if } | \frac{H_{\theta}}{(N-1)\sqrt{-\delta}} | <1, \\
		e^{\sqrt{-\delta}(N-1)t} &\text{if } \frac{H_{\theta}}{(N-1)\sqrt{-\delta}} = 1, \\
		e^{-\sqrt{-\delta}(N-1)t} &\text{if } \frac{H_{\theta}}{(N-1)\sqrt{-\delta}}=-1, \\
	\end{cases}  
\end{align*}
where 
\begin{align*}
\beta_{\theta} := 
	\begin{cases}
		\coth^{-1}(\frac{H_{\theta}}{(N-1)\sqrt{-\delta}}) &\text{if } |\frac{H_{\theta}}{(N-1)\sqrt{-\delta}}| >1, \\
		\tanh^{-1}(\frac{H_{\theta}}{(N-1)\sqrt{-\delta}}) &\text{if } |\frac{H_{\theta}}{(N-1)\sqrt{-\delta}}| <1. 
	\end{cases}
\end{align*}
When $|H_{\theta}/(N-1)| \neq \sqrt{-\delta}$, 
the right hand side of \eqref{inequality_infty} becomes
\begin{align*}
	\begin{cases}
		\frac{\sinh^{N-1}(\beta_{\theta})}{\int_{\xi_1}^{\xi_2} (\sinh(\sqrt{-\delta} t))_+^{N-1} ~dt}(b-a) &\text{if }  \frac{H_{\theta}}{(N-1)\sqrt{-\delta}}  >1, \\
		\frac{\sinh^{N-1}(-\beta_{\theta})}{\int_{\xi_1}^{\xi_2} (\sinh(-\sqrt{-\delta} t))_+^{N-1} ~dt}(b-a) &\text{if }  \frac{H_{\theta}}{(N-1)\sqrt{-\delta}}  <-1, \\
		\frac{\cosh^{N-1}(\beta_{\theta})}{\int_{\xi_1}^{\xi_2} \cosh^{N-1}(\sqrt{-\delta} t) ~dt}(b-a) &\text{if } | \frac{H_{\theta}}{(N-1)\sqrt{-\delta}} | <1, 
	\end{cases}
\end{align*}
and the right hand side of \eqref{theta} becomes
\begin{align*}
	\begin{cases}
		\frac{\int_{\xi_1}^{\xi_3} (\sinh(\sqrt{-\delta} t))_+^{N-1} ~dt}{\int_{\xi_1}^{\xi_2} (\sinh(\sqrt{-\delta} t))_+^{N-1} ~dt} &\text{if }  \frac{H_{\theta}}{(N-1)\sqrt{-\delta}}  >1, \\
		\frac{\int_{\xi_1}^{\xi_3} (\sinh(-\sqrt{-\delta} t))_+^{N-1} ~dt}{\int_{\xi_1}^{\xi_2} (\sinh(-\sqrt{-\delta} t))_+^{N-1} ~dt} &\text{if }  \frac{H_{\theta}}{(N-1)\sqrt{-\delta}}  <-1, \\
		\frac{\int_{\xi_1}^{\xi_3} \cosh^{N-1}(\sqrt{-\delta} t) ~dt}{\int_{\xi_1}^{\xi_2} \cosh^{N-1}(\sqrt{-\delta} t) ~dt} &\text{if } | \frac{H_{\theta}}{(N-1)\sqrt{-\delta}} | <1, 
	\end{cases}
\end{align*}
where $\xi_1 := -(b-a)+\beta_{\theta}/\sqrt{-\delta}$, $\xi_2 := D-(b-a)+\beta_{\theta}/\sqrt{-\delta}$ and $\xi_3 :=\beta_{\theta}/\sqrt{-\delta}$.
Therefore, combining these with the argument in Case 3 for $|H_{\theta}/(N-1)| = \sqrt{-\delta}$ , it follows from Lemma \ref{lem: lemma0} that 
\begin{align*}
\Phi(\theta) &\geq \min \left\{
	\begin{array}{ll}
		\inf_{x \in (-D, \infty)} \D^{\flat}( (\sinh(\sqrt{-\delta}t))_+^{N-1}, [x, x+D])(\theta), \\
		\inf_{x \in (-\infty, 0)} \D^{\flat}( (\sinh(-\sqrt{-\delta}t))_+^{N-1}, [x, x+D])(\theta), \\
		\inf_{x \in \R} \D^{\flat}( \cosh^{N-1}(\sqrt{-\delta}t), [x, x+D])(\theta), \\
		\D^{\flat}( e^{-\sqrt{-\delta}(N-1)t}, [0, D])(\theta),\\
		\D^{\flat}( e^{\sqrt{-\delta}(N-1)t}, [0, D])(\theta),
	\end{array}
\right\} \\
	&= \min \left\{
	\begin{array}{ll}
		\inf_{x \in (-\infty, 0)} \D^{\flat}( \sinh^{N-1}(-\sqrt{-\delta}t), [x, \min \{ x+D, 0\}])(\theta), \\
		\inf_{x \in \R} \D^{\flat}( \cosh^{N-1}(\sqrt{-\delta}t), [x, x+D])(\theta), \\
		\D^{\flat}( e^{-\sqrt{-\delta}(N-1)t}, [0, D])(\theta)
	\end{array}
\right\}.
\end{align*}

{\bf Case 7.} Suppose $N \in (-\infty, 0]$ and $K>0$.
Then we have 
\begin{align*}
	J_{H_{\theta}, K, N}(t) &= \left( \cosh(\sqrt{-\delta} t) + \frac{H_{\theta}}{(N-1)\sqrt{-\delta}}\sinh(\sqrt{-\delta}t)  \right)_+^{N-1} \\
	&= 
	\begin{cases}
		\left( \frac{\sinh(\beta_{\theta} + \sqrt{-\delta} t)}{\sinh(\beta_{\theta})} \right)_+^{N-1} &\text{if }  |\frac{H_{\theta}}{(N-1)\sqrt{-\delta}}|  >1, \\
		\left( \frac{\cosh(\beta_{\theta} + \sqrt{-\delta} t)}{\cosh(\beta_{\theta})} \right)^{N-1} &\text{if } | \frac{H_{\theta}}{(N-1)\sqrt{-\delta}} | <1, \\
		e^{\sqrt{-\delta}(N-1)t} &\text{if } \frac{H_{\theta}}{(N-1)\sqrt{-\delta}} = 1, \\
		e^{-\sqrt{-\delta}(N-1)t} &\text{if } \frac{H_{\theta}}{(N-1)\sqrt{-\delta}} = -1,
	\end{cases}  
\end{align*}
where 
\begin{align*}
\beta_{\theta} := 
	\begin{cases}
		\coth^{-1}(\frac{H_{\theta}}{(N-1)\sqrt{-\delta}}) &\text{if } |\frac{H_{\theta}}{(N-1)\sqrt{-\delta}}| >1, \\
		\tanh^{-1}(\frac{H_{\theta}}{(N-1)\sqrt{-\delta}}) &\text{if } |\frac{H_{\theta}}{(N-1)\sqrt{-\delta}}| <1, 
	\end{cases}
\end{align*}
and we exclude the case $\frac{H_{\theta}}{(N-1)\sqrt{-\delta}} \leq-1$ when $D=\infty$.

When $D=\infty$, by the same argument as in Case 6, we obtain 
\begin{align*}
\Phi(\theta) \geq \min \left\{
	\begin{array}{ll}
		\inf_{x >0} \D^{\flat}( \sinh^{N-1}(\sqrt{-\delta}t), [x, \infty))(\theta), \\
		\inf_{x \in \R} \D^{\flat}( \cosh^{N-1}(\sqrt{-\delta}t), [x, \infty))(\theta), \\
		\D^{\flat}( e^{-t}, [0, \infty))(\theta)
	\end{array}
\right\}.
\end{align*}
Similarly, when $D<\infty$, it holds that by Lemma \ref{lem: lemma0},
\begin{align*}
\Phi(\theta) &\geq \min \left\{
	\begin{array}{ll}
		\inf_{x >0} \D^{\flat}( \sinh^{N-1}(\sqrt{-\delta}t), [x, x+D])(\theta), \\
		\inf_{x <0} \D^{\flat}( \sinh^{N-1}(-\sqrt{-\delta}t), [x-D, x])(\theta), \\
		\inf_{x \in \R} \D^{\flat}( \cosh^{N-1}(\sqrt{-\delta}t), [x, x+D])(\theta), \\
		\D^{\flat}( e^{-\sqrt{-\delta}(N-1)t}, [0, D])(\theta),\\
		\D^{\flat}( e^{\sqrt{-\delta}(N-1)t}, [0, D])(\theta)
	\end{array}
\right\}\\
	&= \min \left\{
	\begin{array}{ll}
		\inf_{x >0} \D^{\flat}( \sinh^{N-1}(\sqrt{-\delta}t), [x, x+D])(\theta), \\
		\inf_{x \in \R} \D^{\flat}( \cosh^{N-1}(\sqrt{-\delta}t), [x, x+D])(\theta), \\
		\D^{\flat}( e^{\sqrt{-\delta}(N-1)t}, [0, D])(\theta)
	\end{array}
\right\}.
\end{align*} 

{\bf Case 8-1.} Suppose $N \in (-\infty, 0)$, $K=0$ and $D=\infty$. 
In this case, we have $J_{H_{\theta}, 0,N}(t) =(1+H_{\theta}t/(N-1))_+^{N-1}$, and $H_{\theta}$ is necessarily negative by the integrability of $J_{H_{\theta}, 0,N}$.
Thus, 
the right hand side of \eqref{inequality_infty} becomes 
\[
\frac{ ((N-1)/H_{\theta})^{N-1} }{ \int_{-(b-a)+(N-1)/H_{\theta}}^{\infty} (t)_+^{N-1} ~dt } (b-a), 
\]
and it follows from \eqref{theta} that  
\[
\theta = \frac{ \int_{-(b-a)}^{0} (1+H_{\theta}t/(N-1))_+^{N-1} ~dt }{ \int_{-(b-a)}^{\infty} (1+H_{\theta}t/(N-1))_+^{N-1} ~dt } = \frac{ \int_{-(b-a)+(N-1)/H_{\theta}}^{(N-1)/H_{\theta}} (t)_+^{N-1} ~dt }{ \int_{-(b-a)+(N-1)/H_{\theta}}^{\infty} (t)_+^{N-1} ~dt }.
\]
Therefore, we obtain 
\begin{align*}
\Phi(\theta) \geq \inf_{x > 0} \D^{\flat}( t^{N-1}, [x, \infty))(\theta).
\end{align*}
We easily see that $\D^{\flat}( t^{N-1}, [x, \infty))$ is independent of $x>0$ by the scale invariance, and hence it yields
\[
\Phi(\theta) \geq \D^{\flat}( t^{N-1}, [1, \infty))(\theta) = -N(1-\theta-(1-\theta)^{1-1/N}).
\]

{\bf Case 8-2.} Suppose $N \in (-\infty, 0)$, $K=0$ and $D<\infty$. 
Since we have $J_{H_{\theta}, 0,N}(t) =(1+H_{\theta}t/(N-1))_+^{N-1}$, 
by the same argument as in Case 5-2 and Lemma \ref{lem: lemma0}, we obtain 
\begin{align*}
\Phi(\theta) \geq \min \left\{
	\begin{array}{ll}
		\inf_{x >0} \D^{\flat}( t^{N-1}, [x, x+D])(\theta), \\
		\inf_{x <0} \D^{\flat}( (-t)^{N-1}, [x-D, x])(\theta), \\
		\D^{\flat}( 1, [0, D])(\theta)
	\end{array}
\right\}
	= \inf_{x >0} \D^{\flat}( t^{N-1}, [x, x+D])(\theta).
\end{align*}
Note that the function $f(t) := t^{N-1}$ satisfies $f'(t)(t-\xi)/f(t) = (N-1)(1-\xi/t)$ for any $\xi>0$, which is strictly decreasing on $t>0$ independently of $\xi$. 
Thus for any $0 <y<z < \infty$, it follows from Lemma \ref{lemma1} and scale transformation that, putting $\bar{z}:=\max\{z, y+D\}$, 
\begin{align*}
	\D^{\flat}( t^{N-1}, [y,z])(\theta) &\geq \D^{\flat}( t^{N-1}, [y, \bar{z}])(\theta)\\
	&= \D^{\flat}( t^{N-1}, [yD/(\bar{z}-y), \bar{z}D/(\bar{z}-y)])(\theta)\\
	&\geq \inf_{x >0} \D^{\flat}( t^{N-1}, [x, x+D])(\theta), 
\end{align*}
which implies that $\inf_{x >0} \D^{\flat}( t^{N-1}, [x, x+D])(\theta)$ is independent of $D$ and therefore it holds that 
\[
\inf_{x >0} \D^{\flat}( t^{N-1}, [x, x+D])(\theta) 
=\inf_{x >0} \D^{\flat}( t^{N-1}, [x, \infty))(\theta).
\]
Hence, we obtain by scale transformation 
\[
\Phi(\theta) \geq \inf_{x >0} \D^{\flat}( t^{N-1}, [1, \infty))(\theta) = -N(1-\theta-(1-\theta)^{1-1/N}).
\]

{\bf Case 9.} Suppose $N \in (-\infty, 0]$, $K<0$ and $D<\pi/\sqrt{\delta}$. 
In this case, we see that 
\begin{align*}
	J_{H_{\theta}, K, N}(t) &= \left( \cos(\sqrt{\delta} t) + \frac{H_{\theta}}{(N-1)\sqrt{\delta}}\sin(\sqrt{\delta}t)  \right)_+^{N-1} 
		= \left( \frac{\sin(\beta_{\theta} + \sqrt{\delta} t)}{\sin(\beta_{\theta})} \right)_+^{N-1},
\end{align*}
where 
\[
\beta_{\theta} := \cot^{-1}\left(\frac{H_{\theta}}{(N-1)\sqrt{\delta}}\right) \in (0, \pi).  
\]
Thus, by the same argument as in Case 4, we obtain 
\[ 
\Phi(\theta) \geq \inf_{x \in (0, \pi/\sqrt{\delta}-D)} \D^{\flat} (\sin^{N-1}(\sqrt{\delta}t), [x, x+D]).
\]

Hence, we obtain the desired assertion.
\end{proof}

\begin{Rem}\label{rem: rem1}
\begin{itemize}
\item[(1)] When $N \in (1, \infty)$, $K<0$ and $D=\infty$, we see that the function
\[
\R \ni H \mapsto \int_0^\infty J_{H, K, N}(t) ~dt \in [0, \infty]
\]
is not continuous. Indeed, when $H = -(N-1)\sqrt{-\delta}$, then we have 
$J_{H, K, N}(t) = \exp(-\sqrt{-\delta}(N-1)t)$, and thus 
\[
\int_0^\infty J_{H, K, N}(t) ~dt < \infty. 
\]
On the other hand, when $|H| < (N-1)\sqrt{-\delta}$, we have 
\[
J_{H, K, N}(t)=\left( \frac{\cosh(\beta_{\theta} + \sqrt{-\delta} t)}{\cosh(\beta_{\theta})} \right)^{N-1}
\]
(see the proof of Case 6 for $\beta_{\theta})$, and hence 
\[
\int_0^\infty J_{H, K, N}(t) ~dt = \infty.
\]
These properties imply the discontinuity of $\int_0^\infty J_{H, K, N}(t) ~dt$ in $H$.
Therefore, we excluded this case from Case 6 in Corollary \ref{MT1}.
\item[(2)] When $N=0$ and $K=0$, we  see that $\mathscr{D}_{0, 0, D}=0$.
Indeed, this follows from $J_{H, 0, 0} \equiv \infty$ for all $H \in \R$ when $D=\infty$. 
When $D<\infty$, we can also reduce this claim via the same argument as the proof of Case 8-2 above.
Hence we excluded this case from Case 8 in Corollary \ref{MT1}.
\item[(3)] E. Milman also discussed the case $N\in (0, 1)$, $K>0$ and $D=\infty$ for the isoperimetric profile in \cite{Mi2}. 
However in our setting, every $(a, b) \in \Delta_D$ satisfies $a<\infty$, and hence the function 
\[
\R \ni H \mapsto \int_{-a}^0 J_{H, K, N}(t) ~dt \in [0, \infty]
\]
is not continuous in this case (see \cite[Proposition 3.3]{Mi2}). 
\item[(4)] We emphasize that when $K=0$ and $N \in (-\infty, 0)\cup(1, \infty]$, we can completely recover \eqref{b} for any geodesically-convex $n$-dimensional weighted Riemannian manifold. 
For this purpose, we need to prove Theorem \ref{MT0} for any Borel subset. 
By the same argument as in Theorem \ref{MT0} via the needle decomposition, we may consider only the 1-dimensional case. 
Since $\D_{0, N, \infty}$ (which coincides with the right hand side of \eqref{b}) is concave on $[0, 1]$, 
we can eventually reduce the 1-dimensional problem for a Borel subset to the one for an interval. 
However, this assertion is exactly proved in Theorem \ref{MT0}. 
Finally, note that the above argument is also applied to other cases if $\D_{K, N, D}$ is concave.
\end{itemize}
\end{Rem}

\section{Estimates for $\varepsilon$-dilation sets under some regularities}\label{4}

In this section, we consider the dilation inequalities associated with $\varepsilon \in (0, 1)$.
Given $K \in \R$, $N \in (-\infty, 0] \cup (1, \infty]$ and $D \in (0, \infty]$ in Cases 1-9 in Corollary \ref{MT1}, 
let $\D_{K, N, D}$ be the function defined in Corollary \ref{MT1}. 

Firstly, we describe an idea to establish our assertion. 
Let $f:[0, \infty) \to [0, \infty)$ be a $C^1$ function supported on $[0, a_f)$ for some $a_f \in (0, \infty]$ with $\int_0^\infty f(t) ~dt=1$ and $f(0)=1$ (we may assume this condition by scaling). 
We define functions $F: [0, \infty) \to (0, 1)$ and $I: [0, 1] \to [0, \infty)$ by $F(x):=\int_0^x f(t) ~dt$ for $x \in [0, \infty)$ and $I(\theta) := f(F^{-1}(\theta))$ for $\theta \in [0, 1]$, respectively, 
where $F^{-1}$ is the inverse function of $F$. 
In general, it is well-known that $f$ can be recovered by $I$ via
\begin{align}\label{isoperimetric profile}
F^{-1}(\theta) = \int_0^{\theta} \frac{1}{I(t)} ~dt
\end{align}
for any $\theta \in (0, 1)$ since $(F^{-1})'=1/I$ on $(0, 1)$.
Similarly, we can construct the density $f$ from $\D(f, [0, \infty))$. 
For simplicity, let us denote $\D(f, [0, \infty))(\theta)$ by $J(\theta)$ for every $\theta \in [0, 1]$.
By the definition, we have $J = IF^{-1}$ on $[0, 1]$. 
Thus, we see that 
\[
I'=\left(\frac{J}{F^{-1}}\right)' = \frac{J'-1}{F^{-1}} 
\]
on $[0, 1]$. Hence, putting 
\[
\widetilde{J} := \frac{J'-1}{J}  , 
\]
we obtain $\widetilde{J}=I'/I =(\log I)'$,  
which yields that  for any $\theta \in (0, 1)$, 
\[
I(\theta) =\exp\left( \int_0^{\theta} \widetilde{J}(s) ~ds \right). 
\]
Combining this equality with \eqref{isoperimetric profile}, we obtain for any $\theta \in (0, 1)$, 
\[
F^{-1}(\theta)=\int_0^{\theta} \exp\left( -\int_0^{t} \widetilde{J}(s) ~ds \right) ~dt.
\]
Therefore, we can determine the function $f$ from $J$. 
For the dilation inequality associated with $\varepsilon$ below, we use similar functions constructed above via Corollary \ref{MT1}.

Now, given a triple $(K, N, D)$, we denote 
\[
I_{K, N, D}(\theta) := \exp\left( \int_0^\theta \widetilde{\D}_{K, N, D} (s) ~ds\right)
\] 
and 
\begin{align}\label{D_tilde}
F_{K, N, D}^{-1} (\theta) := \int_0^{\theta} \frac{1}{I_{K, N, D}(t)} ~dt
\end{align}
for $\theta \in [0, 1]$, where $\widetilde{\D}_{K, N, D}$ is given by for $s \in (0, 1)$, 
\begin{align*}
\widetilde{\D}_{K, N, D}(s) := \frac{(\D_{K, N, D})'(s) -1}{\D_{K, N, D}(s)}.
\end{align*}
In order to ensure the existence of \eqref{D_tilde}, we assume the following regularities.  

\medskip \noindent
{\bf Assumption (A).} We say that a triple $(K, N, D)$ 
satisfies Assumption (A) if $\D_{K, N, D} \in C([0, 1]) \cap C^1((0, 1))$ and $\lim_{\theta\to0}\widetilde{\D}_{K, N, D}(\theta)$ exists. 
\medskip

When $K=0$, by Corollary \ref{MT1}, $(0, N, D)$ satisfies Assumption (A) for any $N \in (-\infty, 0) \cup (1, \infty]$ and $D \in (0, \infty]$.
More precisely, these cases yield the concavity of $\D_{0, N, D}$. 
The author also expects the concavity of $\D_{1, N, D}$ with $N>1$, in particular $\D_{1, \infty, \infty}$, since 
the corresponding isoperimetric profiles satisfy the concavity. 

We remark that Assumption (A) implies that $\lim_{\theta\to0}(\D_{K, N, D})'(\theta) =1$, and hence 
\begin{align}\label{limit}
\lim_{\theta\to0} \frac{F_{K, N, D}^{-1} (\theta)}{\D_{K, N, D}(\theta)} = 1.
\end{align}
Note also that  $F_{K, N, D}^{-1}(\theta)$ is continuous and strictly increasing in $\theta \in [0, 1]$, and hence we have its inverse function $F_{K, N, D}: [0, F_{K, N, D}^{-1,\infty}] \to [0, 1]$ which is also continuous and strictly increasing, where we denote $F_{K, N, D}^{-1, \infty} := \lim_{\theta \to 1}F_{K, N, D}^{-1}(\theta) \in (0, \infty]$. 

The following assertion is the main theorem in this section.

\begin{Thm}\label{thm: MT2}
Let $(\M, g, \m)$ be a geodesically-convex $n$-dimensional weighted Riemannian manifold satisfying $\m(\M)=1$, $\Ric_N\geq K$ and $\mathrm{diam}\M \leq D$ for some $K\in \R$, $N \in (-\infty, 0]\cup [n, \infty]$ and $D \in (0, \infty]$ ($N\neq 1$ when $n=1$).
Assume that a triple $(K, N, D)$ satisfies Assumption (A). 
Then for any $\varepsilon \in (0, 1)$ and for any strongly-convex subset $A \subset \M$ with $\m(A) < F_{K, N, D}((1-\varepsilon) F_{K, N, D}^{-1, \infty})$, we have 
\[
\m(A_{\varepsilon}) \geq F_{K, N, D}\left(\frac{1}{1-\varepsilon} F_{K, N, D}^{-1}(\m(A))\right). 
\]
\end{Thm}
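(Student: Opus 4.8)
The plan is to follow the same localization strategy used in the proof of Theorem \ref{MT0}, reducing to a one-dimensional statement and then integrating over needles. First I would set $\theta_0 := \m(A)$ and apply Theorem \ref{localization} to the function $f := \mathbf{1}_A - \theta_0$, obtaining a partition $Q$, a measure $\nu$, and conditional probability measures $\{\mu_I\}_{I\in Q}$ with $\mu_I(A) = \theta_0$ and $(I,|\cdot|,\mu_I)$ satisfying $\Ric_N \geq K$ and $\mathrm{diam}\,I \leq D$ for $\nu$-almost every $I$. Since $A$ is strongly-convex and $\nu$-almost every $I$ is a minimizing geodesic, $A \cap I$ is an interval. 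As in Theorem \ref{MT0}, using $(A\cap I)_\varepsilon \subset A_\varepsilon$ together with $\m(A_\varepsilon) = \int_Q \mu_I(A_\varepsilon)\,d\nu(I)$, it suffices to prove the one-dimensional estimate: for a probability measure $\mu$ on an open interval $I$ with smooth density, satisfying $\Ric_N \geq K$ and $|I| \leq D$, and for any interval $A \subset I$ with $\mu(A) = \theta_0$, one has $\mu(A_\varepsilon) \geq F_{K,N,D}\big(\tfrac{1}{1-\varepsilon}F_{K,N,D}^{-1}(\theta_0)\big)$ — provided the right-hand side argument is legitimate, i.e. $\theta_0 < F_{K,N,D}((1-\varepsilon)F_{K,N,D}^{-1,\infty})$. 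One must then check that this one-dimensional bound, being a pointwise lower bound depending only on $\theta_0$ and $\varepsilon$ (and $K,N,D$), passes through the integral $\int_Q \cdots \,d\nu$; here one should be careful that the hypothesis $\m(A) < F_{K,N,D}((1-\varepsilon)F_{K,N,D}^{-1,\infty})$ guarantees the one-dimensional hypothesis holds for $\nu$-a.e.\ $I$ as well, since $\mu_I(A)=\theta_0$.

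For the one-dimensional core, the idea is the differential-inequality/flow argument already present in Proposition \ref{char_of_min} and Theorem \ref{thm:MT0}, now tracked for all $\varepsilon$ rather than just the derivative at $\varepsilon = 0$. Define $G(\varepsilon) := \mu(A_\varepsilon)$. By Proposition \ref{char_of_min} (moving $A$ so that $\mu^*$ does not increase while preserving volume), I would reduce to the case where either $A$ satisfies the balance equation \eqref{sinh_equation} or an endpoint of $A$ lies on $\partial[\mathrm{supp}(\mu)]$; the point is that $\mu(A_\varepsilon)$ only decreases under such a move for each fixed $\varepsilon$ (one should verify this monotonicity statement holds not merely infinitesimally — this is essentially the content of \eqref{bb} being sign-definite along the whole flow, not just at $s=0$). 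With $A=(a,b)$ and $A_\varepsilon = \big(a - \tfrac{\varepsilon}{1-\varepsilon}(b-a),\, b + \tfrac{\varepsilon}{1-\varepsilon}(b-a)\big)$ from \eqref{interval0}, differentiating $\mu(A_\varepsilon)$ in $\varepsilon$ and applying the $(K,N-1)$-convexity estimate \eqref{c} exactly as in \eqref{inequality21}–\eqref{inequality22} should produce a differential inequality of the form $G'(\varepsilon) \geq (\text{something}) \geq \D_{K,N,D}(G(\varepsilon))/(1-\varepsilon)$ — comparing with the model extremizer identified in Corollary \ref{MT1}. The function $F_{K,N,D}^{-1}$ is precisely designed (via $I_{K,N,D} = \exp(\int_0^\theta \widetilde{\D}_{K,N,D})$ and $(F^{-1})' = 1/I$) so that the model density satisfies $\D(f,[0,\infty))(\theta) = \D_{K,N,D}(\theta)$, and hence $\tfrac{d}{d\varepsilon}F_{K,N,D}^{-1}(G(\varepsilon)) \geq \tfrac{1}{1-\varepsilon}$ by a chain-rule computation using $\D_{K,N,D}(\theta) = I_{K,N,D}(\theta) F_{K,N,D}^{-1}(\theta)$.

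Granting that differential inequality, the conclusion follows by Gr\"onwall-type integration: from $\tfrac{d}{d\varepsilon}F_{K,N,D}^{-1}(\mu(A_\varepsilon)) \geq \tfrac{1}{1-\varepsilon}$ and $F_{K,N,D}^{-1}(\mu(A_0)) = F_{K,N,D}^{-1}(\theta_0)$, integrating from $0$ to $\varepsilon$ gives $F_{K,N,D}^{-1}(\mu(A_\varepsilon)) \geq F_{K,N,D}^{-1}(\theta_0) - \log(1-\varepsilon) = F_{K,N,D}^{-1}(\theta_0) + \log\tfrac{1}{1-\varepsilon}$. Actually one wants the multiplicative form $F_{K,N,D}^{-1}(\mu(A_\varepsilon)) \geq \tfrac{1}{1-\varepsilon}F_{K,N,D}^{-1}(\theta_0)$, so the differential inequality should instead read $\tfrac{d}{d\varepsilon}F_{K,N,D}^{-1}(\mu(A_\varepsilon)) \geq \tfrac{1}{1-\varepsilon}F_{K,N,D}^{-1}(\mu(A_\varepsilon))$ — i.e.\ comparison with the scaling flow rather than a translation flow — which again is exactly what the model computation with \eqref{c} yields, since the model set is an interval and dilation acts multiplicatively on intervals after the appropriate change of variables. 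Applying $F_{K,N,D}$ (increasing) to both sides, which is legitimate as long as $\tfrac{1}{1-\varepsilon}F_{K,N,D}^{-1}(\theta_0)$ stays in the domain $[0,F_{K,N,D}^{-1,\infty}]$ — precisely the hypothesis $\m(A) < F_{K,N,D}((1-\varepsilon)F_{K,N,D}^{-1,\infty})$ — gives the claim. The main obstacle I anticipate is the careful bookkeeping in the one-dimensional step: establishing that the Proposition \ref{char_of_min} reduction is valid for $\mu(A_\varepsilon)$ at every $\varepsilon$ (not just at the level of dilation areas), correctly handling Case 2 where an endpoint sits on the boundary of the support, and matching the differential inequality one extracts from \eqref{c} with the ODE characterizing $F_{K,N,D}^{-1}$ — including the degenerate/limiting cases (e.g.\ $F_{K,N,D}^{-1,\infty} = \infty$) and verifying the boundary behaviour \eqref{limit} is enough to start the integration at $\varepsilon = 0$.
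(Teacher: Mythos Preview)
Your reduction to one dimension via Theorem \ref{localization} is correct and matches the paper exactly. The gap is in the one-dimensional step.

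You propose to move the interval \`a la the proof of Theorem \ref{thm:MT0} (what you call the Proposition \ref{char_of_min} reduction) and then run a differential inequality on $G(\varepsilon):=\mu(A_\varepsilon)$. But this does not yield the sharp inequality. For a two-sided interval $A=(a,b)$ with both endpoints interior, the dilation semigroup is $(A_\varepsilon)_{\varepsilon'}=A_{\varepsilon''}$ with $\tfrac{1+\varepsilon''}{1-\varepsilon''}=\tfrac{(1+\varepsilon)(1+\varepsilon')}{(1-\varepsilon)(1-\varepsilon')}$, so a short computation gives $\mu^*(A_\varepsilon)=(1-\varepsilon^2)\,G'(\varepsilon)$. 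Combining with Corollary \ref{MT1} applied to the interval $A_\varepsilon$ only produces $G'(\varepsilon)\ge \tfrac{1}{1-\varepsilon^2}\D_{K,N,D}(G(\varepsilon))$, which after Gr\"onwall integrates to $F_{K,N,D}^{-1}(G(\varepsilon))\ge \sqrt{\tfrac{1+\varepsilon}{1-\varepsilon}}\,F_{K,N,D}^{-1}(\theta_0)$ rather than the required multiplicative factor $\tfrac{1}{1-\varepsilon}$. The ``balanced'' position from \eqref{sinh_equation} does not help here: that reduction concerns only the derivative at $\varepsilon=0$, and in any case Proposition \ref{char_of_min} carries hypotheses \eqref{sinh+}--\eqref{sinh-} that a general $CD(K,N)$ density need not satisfy.

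The paper fixes this with a different reduction. Lemma \ref{lem: reduction} splits $\mu$ at a point $\xi\in A$ into conditional probabilities $\mu_\pm$ (still $CD(K,N)$, still diameter $\le D$) with $\mu_\pm(A)=\theta_0$, so that one may assume $I=[0,\mu_\infty)$ and $A=[0,a]$. For such a \emph{half}-interval the semigroup is exactly $A_\varepsilon=[0,a/(1-\varepsilon)]$, hence $(A_\varepsilon)_{\varepsilon'}=A_{\varepsilon_3}$ with $1-\varepsilon_3=(1-\varepsilon)(1-\varepsilon')$. The paper then does not integrate a differential inequality directly; it runs a Bobkov--Houdr\'e style open--closed argument (Theorem \ref{thm: MT2'}): compare against the slack target $R_\varepsilon^\sigma(\theta):=F_{K,N,D}\big(\tfrac{1}{1-\varepsilon\sigma}F_{K,N,D}^{-1}(\theta)\big)$ for $\sigma<1$, use Proposition \ref{prop: concavity2} (which identifies $\tfrac{d}{d\varepsilon}R_\varepsilon(\theta)\big|_{\varepsilon=0}=\D_{K,N,D}(\theta)$) together with Corollary \ref{MT1} to start the inequality for small $\varepsilon$, and then bootstrap via the exact semigroup identity to push past any putative maximal $\varepsilon_1$. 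Letting $\sigma\to 1$ finishes. If you prefer your Gr\"onwall formulation, it \emph{does} go through once you adopt the half-interval reduction of Lemma \ref{lem: reduction}: then $G'(\varepsilon)=\tfrac{1}{1-\varepsilon}\mu^*(A_\varepsilon)\ge \tfrac{1}{1-\varepsilon}\D_{K,N,D}(G(\varepsilon))$, and Proposition \ref{prop: concavity2} (i.e.\ $\D_{K,N,D}=I_{K,N,D}\cdot F_{K,N,D}^{-1}$) turns this into $\tfrac{d}{d\varepsilon}F_{K,N,D}^{-1}(G(\varepsilon))\ge \tfrac{1}{1-\varepsilon}F_{K,N,D}^{-1}(G(\varepsilon))$, which integrates to the claim. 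The missing piece in your proposal is precisely Lemma \ref{lem: reduction}, not Proposition \ref{char_of_min}.
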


For simplicity, let us denote $F_{K, N, D}((1-\varepsilon) F_{K, N, D}^{-1, \infty})$ by $F_{K, N, D, \varepsilon}^{\infty}$.
We remark that $F_{K, N, D, \varepsilon}^{\infty}=1$ for any $\varepsilon \in (0, 1)$ when $F_{K, N, D}^{-1, \infty} = \infty$. 
Before proving this theorem, we note that Theorem \ref{thm: MT2} can partially recover Theorem \ref{Klartag}.
Indeed, in the case of  $N \in (-\infty, 0)\cup (1, \infty)$, $K=0$ and $D =\infty$, we see that for any $s \in (0, 1)$, 
\begin{align*}
\widetilde{\D}_{0, N, \infty} (s) = \frac{-N(-1+(1-1/N)(1-s)^{-1/N}) -1}{-N(1-s-(1-s)^{1-1/N})} = -\frac{N-1}{N}\cdot\frac{1}{1-s},
\end{align*}
and hence we have for any $\theta \in [0, 1]$, 
\begin{align*}
I_{0, N, \infty}(\theta) =\exp\left( \int_0^\theta \widetilde{\D}_{0, N, \infty} (s) ~ds \right) 
= \exp\left(-\frac{N-1}{N} \int_0^\theta \frac{1}{1-s}  ~ds\right)
= (1-\theta)^{(N-1)/N}
\end{align*}
and
\begin{align*}
	F_{0, N, \infty}^{-1} (\theta) = \int_0^{\theta} \frac{1}{I_{0, N, \infty}(t)} ~dt 
		= \int_0^{\theta} (1-t)^{-(N-1)/N} ~dt = N - N(1-\theta)^{1/N}.
\end{align*}
Thus, $F_{0, N. \infty}^{-1, \infty} = N$ if $N \in (1, \infty)$ and $\infty$ if $N \in (-\infty, 0)$, and we obtain for any $x \in [0, F_{0, N, \infty}^{-1, \infty})$, 
\begin{align*}
F_{0, N, \infty}(x) = 1- \left( 1- \frac{x}{N} \right)^N. 
\end{align*}
Therefore, we have for any $\theta \in [0, F_{0, N, \infty, \varepsilon}^{\infty})$, 
\begin{align*}
	F_{0, N, \infty}\left(\frac{1}{1-\varepsilon} F_{0, N, \infty}^{-1}(\theta)\right) = 1- \left( 1- \frac{N - N(1-\theta)^{1/N}}{N(1-\varepsilon)} \right)^N
		= 1- \left( \frac{(1-\theta)^{1/N} - \varepsilon}{1-\varepsilon} \right)^N.
\end{align*}
The same argument applies to $N=\infty$ and we obtain $F_{0, \infty. \infty}^{-1, \infty} = \infty$ and 
\[
F_{0, \infty, \infty}\left(\frac{1}{1-\varepsilon} F_{0, \infty, \infty}^{-1}(\theta)\right) = 1-(1-\theta)^{1/(1-\varepsilon)}.
\]
In addition, we see that 
\[
	F_{0, N, \infty, \varepsilon}^{\infty} = 
		\begin{cases}
			1-\varepsilon^N &\text{if $N \in (1, \infty)$}, \\
			1 &\text{if $N \in (-\infty, 0) \cup \{\infty \}$}.
		\end{cases}
\]

\begin{Rem}
More precisely, when $K=0$, we can completely recover Theorem \ref{Klartag} by combining the subsequent arguments in this paper with Remark \ref{rem: rem1}(4) and the decreasing rearrangement used in \cite{NSV} and \cite{BN}.
\end{Rem}

We again use the needle decomposition to prove Theorem \ref{thm: MT2}.
%
Thus, similarly to the proof of Theorem \ref{MT0} via Theorem \ref{localization}, we consider only the 1-dimensional problem of Theorem \ref{thm: MT2}. 
In order to prove the 1-dimensional problem, we need the followings. 
\begin{Prop}\label{prop: concavity2}
Let $K \in \R$, $N \in (-\infty, 0]\cup (1, \infty]$ and $D \in (0, \infty]$, and we assume that a triple $(K, N, D)$ satisfies Assumption (A). Then for given $\theta \in (0, 1)$, 
$F_{K, N, D}(F_{K, N, D}^{-1}(\theta)/(1-\varepsilon))$ is strictly increasing in $\varepsilon$, and we have 
\[
\frac{d}{d\varepsilon} F_{K, N, D}\left(\frac{1}{1-\varepsilon} F_{K, N, D}^{-1}(\theta)\right) \Bigg{|}_{\varepsilon=0} = \D_{K, N, D}(\theta).
\]
\end{Prop}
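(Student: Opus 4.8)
The plan is to differentiate the composition $\varepsilon \mapsto F_{K,N,D}\bigl(F_{K,N,D}^{-1}(\theta)/(1-\varepsilon)\bigr)$ directly, using the chain rule together with the identities $(F_{K,N,D}^{-1})' = 1/I_{K,N,D}$ on $(0,1)$ and $I_{K,N,D} = \exp(\int_0^\cdot \widetilde{\D}_{K,N,D})$ which come from the very definitions in \eqref{D_tilde}. First I would fix $\theta \in (0,1)$, abbreviate $F := F_{K,N,D}$, $I := I_{K,N,D}$, $\D := \D_{K,N,D}$, and write $x(\varepsilon) := F^{-1}(\theta)/(1-\varepsilon)$, so that the quantity in question is $F(x(\varepsilon))$. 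Since $F$ is continuous and strictly increasing with $F' = I \circ F$ on the relevant interval (being the inverse of $F^{-1}$, whose derivative is $1/I$), and since $x(\varepsilon)$ is clearly strictly increasing in $\varepsilon$ with $x'(\varepsilon) = F^{-1}(\theta)/(1-\varepsilon)^2 > 0$, the strict monotonicity in $\varepsilon$ is immediate provided $x(\varepsilon)$ stays in the domain $[0, F^{-1,\infty})$; I would note this is exactly the range of $\varepsilon$ for which the expression is defined. The derivative is then
\[
\frac{d}{d\varepsilon} F(x(\varepsilon)) = F'(x(\varepsilon)) \, x'(\varepsilon) = I\bigl(F(x(\varepsilon))\bigr) \cdot \frac{F^{-1}(\theta)}{(1-\varepsilon)^2}.
\]

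Next I would evaluate at $\varepsilon = 0$. Then $x(0) = F^{-1}(\theta)$, so $F(x(0)) = \theta$, and the formula above collapses to $I(\theta) \cdot F^{-1}(\theta)$. So the whole matter reduces to the identity $I_{K,N,D}(\theta)\, F_{K,N,D}^{-1}(\theta) = \D_{K,N,D}(\theta)$. This is precisely the relation $J = I F^{-1}$ observed in the introductory discussion of Section \ref{4} (there stated for the model density $f$ on $[0,\infty)$ with $J = \D(f,[0,\infty))$): by construction $F_{K,N,D}^{-1}$ and $I_{K,N,D}$ are defined from $\widetilde{\D}_{K,N,D}$ in exactly the way that reconstructs, from a dilation profile $J$, the pair $(F^{-1}, I)$ satisfying $J = I F^{-1}$. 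I would therefore either invoke that computation directly or re-derive it in one line: from $\widetilde{\D} = (\D' - 1)/\D$ one gets $(\log I)' = \widetilde{\D} = (\D'-1)/\D$, while $(\D/F^{-1})' = (\D' F^{-1} - \D (F^{-1})')/(F^{-1})^2 = (\D' - \D/I)/F^{-1}$; one checks $(\D/F^{-1})'/(\D/F^{-1}) = (\log I)'$, so $\D/F^{-1}$ and $I$ have the same logarithmic derivative on $(0,1)$, and both tend to $1$ as $\theta \to 0$ by \eqref{limit}, hence $\D = I F^{-1}$ on $[0,1)$, and by continuity on $[0,1]$.

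The main obstacle is not the differentiation — which is routine — but the boundary/regularity bookkeeping: I must make sure $F$ is differentiable at $x(0) = F^{-1}(\theta)$ (equivalently that $I$ does not vanish, which holds since $I = \exp(\cdots) > 0$), that Assumption (A) guarantees $\widetilde{\D}_{K,N,D}$ is integrable near $0$ so that $I$ and $F^{-1}$ are well-defined and $C^1$ on $(0,1)$ with the stated limits, and that $x(\varepsilon) < F^{-1,\infty}$ so the composition makes sense in a neighbourhood of $\varepsilon = 0$. Once these points are dispatched (all of them following from Assumption (A) and the properties of $F_{K,N,D}$ recorded just before Theorem \ref{thm: MT2}), the computation of the derivative and its value at $\varepsilon = 0$ is a short chain-rule argument, and the strict monotonicity follows from positivity of $I$ and of $x'(\varepsilon)$.
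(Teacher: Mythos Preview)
Your reduction is right: the chain rule gives the derivative at $\varepsilon=0$ as $I_{K,N,D}(\theta)\,F_{K,N,D}^{-1}(\theta)$, and the monotonicity is immediate. The paper does exactly the same computation. The whole content of the proposition is therefore the identity $I_{K,N,D}\,F_{K,N,D}^{-1}=\D_{K,N,D}$, and this is precisely where your argument breaks.

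Your ``one-line'' derivation is circular. You claim one checks $(\D/F^{-1})'/(\D/F^{-1})=(\log I)'$; but
\[
\bigl(\log(\D/F^{-1})\bigr)'=\frac{\D'}{\D}-\frac{(F^{-1})'}{F^{-1}}=\frac{\D'}{\D}-\frac{1}{I\,F^{-1}},\qquad (\log I)'=\widetilde{\D}=\frac{\D'-1}{\D}=\frac{\D'}{\D}-\frac{1}{\D},
\]
and these agree \emph{if and only if} $I\,F^{-1}=\D$, which is the very thing to be proved. (Incidentally your intermediate simplification $(\D/F^{-1})'=(\D'-\D/I)/F^{-1}$ is also an algebra slip: the correct expression is $(\D' F^{-1}-\D/I)/(F^{-1})^2$.) Nor can you simply ``invoke'' the discussion at the start of Section~\ref{4}: that passage starts from a density $f$ and shows $J=I\,F^{-1}$ for the $I,F^{-1},J$ built \emph{from $f$}; here $\D_{K,N,D}$ is not given as $\D^\flat(f,[0,\infty))$ for some fixed $f$, so the forward construction does not apply.

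What actually works --- and what the paper does --- is to set $H:=I\,F^{-1}$ and observe $H'=I'F^{-1}+I(F^{-1})'=\widetilde{\D}\,H+1$, so that both $H$ and $\D$ satisfy the \emph{linear} ODE $G'=\widetilde{\D}\,G+1$ on $(0,1)$. Then $W:=H-\D$ satisfies $W'=\widetilde{\D}\,W$, hence $W/I$ is constant; since $I(0)=1$ while $H(0)=\D(0)=0$ (with \eqref{limit} controlling the ratio near $0$), the constant is $0$ and $H\equiv\D$. The paper packages this as an open--closed connectedness argument on the set $\{\theta:\ H=\D\text{ on }[0,\theta]\}$, but the substance is this ODE-uniqueness step, which your proposal is missing.
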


\begin{proof}
The monotonicity of $F_{K, N, D}(F_{K, N, D}^{-1}(\theta)/(1-\varepsilon))$ in $\varepsilon$ immediately follows from the monotonicity of $F_{K, N, D}$. 
We also see that 
\begin{align*}
\frac{d}{d\varepsilon} F_{K, N, D}\left(\frac{1}{1-\varepsilon} F_{K, N, D}^{-1}(\theta)\right) \Bigg{|}_{\varepsilon=0}
 &= F^{-1}_{K, N, D}(\theta) F_{K, N, D}'\left( F_{K, N, D}^{-1}(\theta)\right) \\
 &= \frac{F^{-1}_{K, N, D}(\theta)}{(F_{K, N, D}^{-1})'(\theta)} = F^{-1}_{K, N, D}(\theta)I_{K, N, D}(\theta).
\end{align*}
Now, we set  
\[
H(\theta) := F_{K,N,D}^{-1}(\theta)I_{K, N, D}(\theta).
\] 
Then it is easy to observe that $H'(\theta) = \widetilde{\D}_{K, N, D} (\theta) H(\theta) +1$. 
Thus, it follows from the definition of $\widetilde{\D}_{K, N, D}$ that 
\begin{align}\label{eq: H}
\frac{H'(\theta)-1}{H(\theta)} = \widetilde{\D}_{K, N, D}(\theta) = \frac{(\D_{K, N, D})'(\theta) -1}{\D_{K, N, D}(\theta)}. 
\end{align}
Note that \eqref{eq: H} holds for any $\theta \in (0, 1)$. 
In order to prove our assertion, it suffices to prove that $H=\D_{K, N, D}$ holds on $[0, 1]$. 
We see that \eqref{eq: H} is equivalent to 
\begin{align}\label{eq: H2}
H'\D_{K, N, D}-H\D'_{K, N, D} = \D_{K, N, D}-H. 
\end{align}
Let $Y:=\{ \theta \in [0, 1]~|~ \D_{K, N, D}(\theta_1)=H(\theta_1) \text{ for any $\theta_1 \in [0, \theta]$}\}$. 
Note that $Y \neq \emptyset$ since $H(0)=\D_{K, N, D}(0)=0$. 
We also see that if $t_0 \in Y$, then we have $t_0 + \delta \in Y$ for small enough $\delta>0$ as follows. 
Fixed $t_0 \in Y$ with $t_0<1$, we suppose $t_0 + \delta \notin Y$ for any small enough $\delta>0$.  
Then there exists some $t_1 \in (t_0, 1]$ such that 
$\D_{K, N, D} > H$ or $\D_{K, N, D}<H$ holds on $(t_0, t_1)$. 
Without loss of generality, we may assume that $\D_{K, N, D} > H$ holds on $(t_0, t_1)$.
Then \eqref{eq: H2} implies that 
\[
(\log H)' > (\log \D_{K, N, D})'
\]
on $(t_0, t_1)$. 
Hence we obtain $H>\D_{K, N, D}$ on $(t_0, t_1)$ (when $t_0=0$, we use \eqref{limit}), which contradicts the assumption on $\D_{K, N, D}$ and $H$. 
Thus, $t_0 + \delta \in Y$ holds for small enough $\delta>0$, which implies that $Y$ is open in $[0, 1]$. 
Therefore, since $Y$ is closed by the continuity of $\D_{K, N, D}$ and $H$, we obtain $Y=[0, 1]$, which completes the proof.
\end{proof}

\begin{Lem}\label{lem: reduction}
Let $\mu$ be a probability measure supported on an open interval $I\subset \R$ with a continuous density on $I$. 
Then for any $ \varepsilon \in (0, 1)$, $\theta \in (0, 1)$ and interval $A \subset I$ with $\mu(A) = \theta$, there exists $\xi \in \overline{A}$ satisfying 
$\mu_-(A) = \mu_+(A) =\theta$ and 
\begin{align*}
\mu(A_{\varepsilon}) \geq \min \{ \mu_-((A\cap (-\infty, \xi])_{\varepsilon}), \mu_+((A\cap[\xi, \infty))_{\varepsilon}) \},
\end{align*}
where $\mu_-$ and $\mu_+$ are normalized probability measures of $\mu$ on $I \cap (-\infty, \xi]$ and $I \cap [\xi, \infty)$, respectively 
(when $\xi$ coincides with one of the endpoints of $I$, then we adopt $\mu(A_\varepsilon)$ as the right hand side above).
\end{Lem}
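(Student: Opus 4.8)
The plan is to construct the splitting point $\xi$ by a single intermediate--value argument, and then to exploit the elementary fact that a one--sided dilation of a shorter interval fits inside the corresponding one--sided part of $A_\varepsilon$.

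First I would reduce to $A=(a,b)$ open, the endpoints being $\mu$--null, and introduce the distribution function $F(x):=\mu\big((-\infty,x]\cap I\big)$, which is continuous (indeed $C^1$) because the density is continuous; note $F(b)-F(a)=\theta$. For any $\xi$ with $0<F(\xi)<1$ one has $\mu_-(A)=\big(F(\xi)-F(a)\big)/F(\xi)$ and $\mu_+(A)=\big(F(b)-F(\xi)\big)/\big(1-F(\xi)\big)$, and a one--line computation shows that both of the equations $\mu_-(A)=\theta$ and $\mu_+(A)=\theta$ are equivalent to the single scalar condition $F(\xi)=F(a)/(1-\theta)$; the crucial cancellation is exactly $F(a)+\theta+\big(1-F(b)\big)=1$. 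Since $0\le F(a)\le F(a)/(1-\theta)\le F(b)$ --- the last inequality being equivalent to $F(b)\le 1$ --- the intermediate value theorem applied to $F$ on $[a,b]$ produces $\xi\in[a,b]=\overline A$ with $F(\xi)=F(a)/(1-\theta)$. If $F(a)>0$ and $F(b)<1$ this $\xi$ lies strictly inside $I$ and $\mu_\pm$ are genuine probability measures; the remaining cases $F(a)=0$ or $F(b)=1$ force $F(\xi)\in\{0,1\}$, i.e.\ $\xi$ is an endpoint of $I$ up to a $\mu$--null set, and then one of $\mu_\pm$ coincides with $\mu$ while the relevant half of $A$ equals $A$, so the asserted inequality reduces to $\mu(A_\varepsilon)\ge\mu(A_\varepsilon)$ in accordance with the stated convention.

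With $\xi$ fixed, set $\ell:=b-a$. By \eqref{interval0}, $A_\varepsilon=\big(a-\tfrac{\varepsilon}{1-\varepsilon}\ell,\ b+\tfrac{\varepsilon}{1-\varepsilon}\ell\big)$, hence $A_\varepsilon\cap(-\infty,\xi]=\big(a-\tfrac{\varepsilon}{1-\varepsilon}\ell,\ \xi\big]$. On the other hand the $\varepsilon$--dilation of the shorter interval $A\cap(-\infty,\xi]=(a,\xi]$ is $\big(a-\tfrac{\varepsilon}{1-\varepsilon}(\xi-a),\ \xi+\tfrac{\varepsilon}{1-\varepsilon}(\xi-a)\big)$, so --- using the remark at the beginning of Section 3 that the dilation inside $I_-:=I\cap(-\infty,\xi]$ as seen by $\mu_-$ is this set intersected with $I_-$ --- the inclusion $\xi-a\le\ell$ yields
\[
\mu\big(A_\varepsilon\cap(-\infty,\xi]\big)\ \ge\ \mu\Big(\big(a-\tfrac{\varepsilon}{1-\varepsilon}(\xi-a),\ \xi\big]\cap I\Big)\ =\ \mu(I_-)\,\mu_-\big((A\cap(-\infty,\xi])_\varepsilon\big),
\]
with $\mu(I_-)=F(\xi)$. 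The mirror--image computation on $[\xi,\infty)$ gives $\mu\big(A_\varepsilon\cap[\xi,\infty)\big)\ge\mu(I_+)\,\mu_+\big((A\cap[\xi,\infty))_\varepsilon\big)$ with $\mu(I_+)=1-F(\xi)$.

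Finally, since $\mu(\{\xi\})=0$ we have $\mu(A_\varepsilon)=\mu\big(A_\varepsilon\cap(-\infty,\xi]\big)+\mu\big(A_\varepsilon\cap[\xi,\infty)\big)$, and because $\mu(I_-)+\mu(I_+)=1$ the sum of the two lower bounds above is a convex combination of $\mu_-\big((A\cap(-\infty,\xi])_\varepsilon\big)$ and $\mu_+\big((A\cap[\xi,\infty))_\varepsilon\big)$, hence at least their minimum; this is the claim. The only point requiring genuine care is the first step: verifying that the one scalar equation $F(\xi)=F(a)/(1-\theta)$ simultaneously enforces $\mu_-(A)=\theta$ and $\mu_+(A)=\theta$, that this value lies in $F([a,b])$, and the bookkeeping for the degenerate endpoint cases; the dilation estimate itself is just the monotonicity observation that a shorter interval dilates by less.
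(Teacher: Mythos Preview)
Your proof is correct and follows essentially the same route as the paper's: find $\xi$ by an intermediate--value argument, observe that the one--sided dilations of the two sub--intervals sit inside the corresponding halves of $A_\varepsilon$, and conclude via the convex--combination bound. The paper phrases the IVT step through the function $G(x):=\mu((-\infty,x]\cap A)/\mu((-\infty,x])$ and looks for $G(\xi)=\theta$, which for $x\in[a,b]$ unravels to exactly your scalar equation $F(\xi)=F(a)/(1-\theta)$; and where you compute the dilated intervals explicitly via \eqref{interval0}, the paper simply invokes the definition of the dilation to get the same inclusion. The handling of the degenerate cases ($F(a)=0$ or $F(b)=1$, i.e.\ $I\setminus A$ having a single component) is also parallel.
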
 

\begin{proof}
Since the assertion is clear when $I \setminus A$ consists of one connected component, we may assume that $I \setminus A$ consists of two connected components. Moreover, without loss of generality, we may also assume that $A$ is closed. 
Let $G:I \to \R$ be the function defined as 
\[
G(x) := \mu((-\infty, x]\cap A)/\mu((-\infty, x])
\]
 and denote $A$ by $[a, b]$. 
Clearly, we have $G(a)=0$ and $G(b)=\theta/\mu((-\infty, b])> \theta$. Since $G$ is continuous, there exists some point $\xi \in \mathrm{int}(A)$ such that $G(\xi)= \theta$. 
Since it follows from the definition of the dilation that $A_{\varepsilon}$ includes the union of $(A\cap (-\infty, \xi])_{\varepsilon} \cap (-\infty, \xi]$ and $(A\cap [\xi, \infty))_{\varepsilon}\cap [\xi, \infty)$ whose intersection consists of only the element $\xi$, we see that 
\begin{align*}
\mu(A_{\varepsilon}) &\geq \frac{\mu((A\cap (-\infty, \xi])_{\varepsilon} \cap (-\infty, \xi])+ \mu((A\cap [\xi, \infty))_{\varepsilon}\cap [\xi, \infty))}{\mu((-\infty, \xi]) + \mu([\xi, \infty))} \\
	&\geq \min \left\{ \frac{\mu((A\cap (-\infty, \xi])_{\varepsilon} \cap (-\infty, \xi])}{\mu((-\infty, \xi])}, \frac{\mu((A\cap [\xi, \infty))_{\varepsilon}\cap [\xi, \infty))}{ \mu([\xi, \infty))} \right\} \\
		&= \min \{ \mu_-((A\cap (-\infty, \xi])_{\varepsilon}), \mu_+((A\cap[\xi, \infty))_{\varepsilon}) \},
\end{align*}
where we used the elementary inequality $(x_1 + x_2)/(x_3 + x_4) \geq \min \{ x_1/x_3, x_2/x_4 \}$ for any $x_i >0$ $(i=1, 2,3,4)$ in the second inequality.
On the other hand, since $G(\xi)=\theta$, we obtain $\mu_-(A)=\theta$ and, 
equivalently, 
\[
\mu_+(A) = \frac{\mu(A \cap [\xi, \infty))}{\mu([\xi, \infty))}
	= \frac{\theta - \mu(A \cap (-\infty, \xi])}{1-\mu((-\infty, \xi])} = \theta.
\]
This completes the proof. 
\end{proof}

Now, we shall prove Theorem \ref{thm: MT2}. 
It suffices to show the following theorem by the same argument as in Theorem \ref{MT0}.
The method of the proof  is derived from the isoperimetric inequality discussed by Bobkov and Houdr\'{e} in \cite[Theorem 2.1]{BH}.
\begin{Thm}\label{thm: MT2'}
Let $(I, |\cdot|, \mu)$ be an open interval $I\subset \R$ with a smooth density and satisfy $\Ric_{N}\geq K$ and $|I| \leq D$ for some $K \in \R$, $N \in (-\infty, 0]\cup (1, \infty]$ and $D \in (0, \infty]$. 
Assume that a triple $(K, N, D)$ satisfies Assumption (A). 
Then for any $\varepsilon \in (0, 1)$ and any interval $A \subset I$ with 
$\mu(A) < F_{K, N, D, \varepsilon}^{\infty}$, we have 
\[
\mu(A_{\varepsilon}) \geq F_{K, N, D}\left(\frac{1}{1-\varepsilon}F_{K, N, D}^{-1}(\mu(A))\right).
\]
\end{Thm}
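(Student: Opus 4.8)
The plan is to prove Theorem \ref{thm: MT2'} by reducing, via Lemma \ref{lem: reduction}, to intervals having an endpoint in common with the ambient interval, and then running a Bobkov--Houdr\'e type ODE comparison (\cite[Theorem 2.1]{BH}). Write $\theta := \mu(A)$; we may assume $\theta \in (0,1)$. If $A$ shares an endpoint with $I$ it is already ``one-sided''. Otherwise $I\setminus A$ has two connected components, and Lemma \ref{lem: reduction} produces $\xi \in \mathrm{int}(A)$ with $\mu_-(A) = \mu_+(A) = \theta$ and $\mu(A_\varepsilon) \ge \min\{\mu_-((A\cap(-\infty,\xi])_\varepsilon), \mu_+((A\cap[\xi,\infty))_\varepsilon)\}$, where $\mu_\pm$ are the normalized restrictions of $\mu$ to the open intervals $I\cap(-\infty,\xi)$ and $I\cap(\xi,\infty)$; these still carry smooth densities with $\Ric_N \ge K$ and length $\le D$, and the two pieces of $A$ are one-sided in them. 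Hence it suffices to prove the following reduced claim: if $\mu'$ is a probability measure on an open interval $I'$ with smooth density, $\Ric_N \ge K$ and $|I'| \le D$, and $B \subset I'$ is an interval sharing an endpoint with $I'$ with $\mu'(B) = \theta < F_{K,N,D,\varepsilon}^\infty$, then $\mu'(B_\varepsilon) \ge F_{K,N,D}(\tfrac{1}{1-\varepsilon}F_{K,N,D}^{-1}(\theta))$.

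For this reduced claim, by the reflection symmetry of all quantities involved we may take $B = (p,c) \subset I' = (p,q)$ with $p$ finite (if the shared endpoint is $\pm\infty$ then $B_\varepsilon = \R$ and there is nothing to prove), so that $B_{\varepsilon'} \cap I' = (p, \min\{q, c_{\varepsilon'}\})$ with $c_{\varepsilon'} := p + \tfrac{c-p}{1-\varepsilon'}$, and we set $\phi(\varepsilon') := \mu'(B_{\varepsilon'})$. If $c_{\varepsilon_0} \ge q$ for some $\varepsilon_0 \le \varepsilon$ then $\phi \equiv 1$ from $\varepsilon_0$ onward and the claim is trivial since the right-hand side is $\le 1$; so assume $\phi < 1$ on $[0,\varepsilon]$, which makes $\phi$ of class $C^1$ there. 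Dilating a one-sided interval moves only its free endpoint, and such dilations compose multiplicatively: $(B_{\varepsilon'}\cap I')_\delta \cap I' = B_{\varepsilon''} \cap I'$ with $1 - \varepsilon'' = (1-\varepsilon')(1-\delta)$, whence $\tfrac{d\varepsilon''}{d\delta}\big|_{\delta=0} = 1-\varepsilon'$ and therefore $(\mu')^*(B_{\varepsilon'}\cap I') = (1-\varepsilon')\phi'(\varepsilon')$. Applying Theorem \ref{thm:MT0} to the interval $B_{\varepsilon'}\cap I' \subset I'$ (and recalling that $\mathscr{D}_{K,N,D} = \D_{K,N,D}$ for the triples in Corollary \ref{MT1}) gives $(1-\varepsilon')\phi'(\varepsilon') \ge \D_{K,N,D}(\phi(\varepsilon'))$, i.e. the differential inequality $\phi'(\varepsilon') \ge \D_{K,N,D}(\phi(\varepsilon'))/(1-\varepsilon')$ with $\phi(0) = \theta$.

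On the model side, put $\Psi(\varepsilon') := F_{K,N,D}(\tfrac{1}{1-\varepsilon'}F_{K,N,D}^{-1}(\theta))$. Differentiating $F_{K,N,D}^{-1}(\Psi(\varepsilon')) = \tfrac{1}{1-\varepsilon'}F_{K,N,D}^{-1}(\theta)$, and using $(F_{K,N,D}^{-1})' = 1/I_{K,N,D}$ together with the identity $F_{K,N,D}^{-1}(t)\,I_{K,N,D}(t) = \D_{K,N,D}(t)$ that is established inside the proof of Proposition \ref{prop: concavity2}, one finds exactly $\Psi'(\varepsilon') = \D_{K,N,D}(\Psi(\varepsilon'))/(1-\varepsilon')$ with $\Psi(0) = \theta$. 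The hypothesis $\theta < F_{K,N,D,\varepsilon}^\infty$ guarantees $\tfrac{1}{1-\varepsilon'}F_{K,N,D}^{-1}(\theta) < F_{K,N,D}^{-1,\infty}$ for all $\varepsilon' \le \varepsilon$, so $\Psi$ is well-defined and takes values in $(0,1)$ on $[0,\varepsilon]$; meanwhile $\phi$ is manifestly non-decreasing (since $c_{\varepsilon'}$ increases with $\varepsilon'$) and also $(0,1)$-valued there. On the resulting compact subinterval of $(0,1)$ the function $\D_{K,N,D}$ is $C^1$, hence Lipschitz (with some constant $L$), by Assumption (A), so a standard Gronwall comparison — applying the integrating factor $(1-\varepsilon')^{L}$ to $\Psi-\phi$ on any subinterval where it is positive — upgrades $\phi(0) = \Psi(0)$ and the two relations above to $\phi \ge \Psi$ on $[0,\varepsilon]$. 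Evaluating at $\varepsilon' = \varepsilon$ proves the reduced claim, and hence Theorem \ref{thm: MT2'}.

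The main difficulty I expect is two-fold. First, it is essential to reduce to one-sided intervals: for a two-sided interval $(a,b)$ not touching $\partial I'$ the same semigroup computation produces the coefficient $1-(\varepsilon')^2$ in place of $1-\varepsilon'$, and then $\D_{K,N,D}(\phi)/(1-(\varepsilon')^2)$ is too small to dominate the model growth $\D_{K,N,D}(\Psi)/(1-\varepsilon')$, so the comparison with $\Psi$ breaks down; this is precisely the role of Lemma \ref{lem: reduction}. Second, the comparison has to be carried out with the coefficient $1/(1-\varepsilon')$, which is singular at $\varepsilon' = 1$, while keeping careful track that both $\phi$ and $\Psi$ stay strictly between $0$ and $1$ (so that the regularity supplied by Assumption (A) is available), and the degenerate case $\phi \equiv 1$ must be disposed of separately.
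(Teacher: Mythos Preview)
Your proof is correct and follows the same overall strategy as the paper: reduce via Lemma~\ref{lem: reduction} to a one-sided interval $B=(p,c)$, exploit the semigroup identity $(B_{\varepsilon'}\cap I')_\delta\cap I'=B_{\varepsilon''}\cap I'$ with $1-\varepsilon''=(1-\varepsilon')(1-\delta)$, feed in the infinitesimal bound $\mu^*\ge \D_{K,N,D}$ from Corollary~\ref{MT1}/Theorem~\ref{thm:MT0}, and compare against the model $\Psi(\varepsilon')=F_{K,N,D}\bigl(\tfrac{1}{1-\varepsilon'}F_{K,N,D}^{-1}(\theta)\bigr)$ whose derivative, by the identity $F_{K,N,D}^{-1}\cdot I_{K,N,D}=\D_{K,N,D}$ proved inside Proposition~\ref{prop: concavity2}, satisfies the exact ODE.

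The only real difference is in how the passage from the infinitesimal inequality to the finite-$\varepsilon$ statement is carried out. You write down the differential inequality $\phi'(\varepsilon')\ge \D_{K,N,D}(\phi(\varepsilon'))/(1-\varepsilon')$ and invoke a Gronwall comparison, using that Assumption~(A) makes $\D_{K,N,D}$ Lipschitz on the compact range $[\theta,\max(\phi(\varepsilon),\Psi(\varepsilon))]\subset(0,1)$. The paper instead introduces a slack parameter $\sigma\in(0,1)$, compares with $R_\varepsilon^\sigma(\theta)=F_{K,N,D}\bigl(\tfrac{1}{1-\varepsilon\sigma}F_{K,N,D}^{-1}(\theta)\bigr)$, uses a first-order Taylor expansion (Proposition~\ref{prop: concavity2}) to get the inequality for small $\varepsilon$, and then runs a discrete continuation/bootstrap at the first failure time $\varepsilon_1$; the slack $\sigma<1$ is what converts $\mu^*\ge \D_{K,N,D}$ into a strict gain at each step, and one lets $\sigma\uparrow 1$ at the end. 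Your Gronwall route is a bit more streamlined and uses the $C^1$ hypothesis in Assumption~(A) more directly (to extract a Lipschitz constant), whereas the paper's slack-and-bootstrap argument only uses differentiability pointwise via Proposition~\ref{prop: concavity2}; in the present setting both are available, so the two arguments are interchangeable.
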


\begin{proof}
Since the assertion is clear when $\mu(A)=0$, we may assume that $\mu(A) >0$.
For given $\theta \in (0, 1)$, 
we define $\tau(\theta) \in (0, 1)$ by the value 
$\sup\{\varepsilon\in(0, 1) ~|~ F_{K, N, D, \varepsilon}^{\infty}\geq \theta\}$
and $R_{\varepsilon}(\theta) := F_{K, N, D}(F_{K, N, D}^{-1}(\theta)/(1-\varepsilon))$ for any $\varepsilon \in (0, 1)$ and $\theta \in (0, F_{K, N, D, \varepsilon}^{\infty})$.
We remark that $F_{K, N, D, \varepsilon}^{\infty}$ is non-increasing in $\varepsilon$.
Now, fix $\theta \in (0, 1)$ and let $A$ be an interval in $I$ with $\mu(A) = \theta$. 
It suffices to prove that $\mu(A_{\varepsilon}) \geq R_{\varepsilon}(\mu(A))$ for any $\varepsilon \in (0, \tau(\theta))$. 
Instead of directly considering $R_{\varepsilon}$, we introduce 
\[
R_{\varepsilon}^{\sigma}(\vartheta) := F_{K, N, D}\left(\frac{1}{1-\varepsilon\sigma}F_{K, N, D}^{-1}(\vartheta)\right)
\]
for $\varepsilon, \sigma \in (0, 1)$ and $\vartheta \in (0, F_{K, N, D, \varepsilon}^{\infty})$, and we will show $\mu(A_{\varepsilon}) \geq R_{\varepsilon}^{\sigma}(\theta)$ for any $\varepsilon \in (0, \tau(\theta))$ and $\sigma \in (0, 1)$. 
Indeed, if this inequality holds, then letting $\sigma \to 1$ leads to $\mu(A_{\varepsilon}) \geq R_{\varepsilon}(\mu(A))$ for any $\varepsilon \in (0, \tau(\theta))$. 
In order to prove $\mu(A_{\varepsilon}) \geq R_{\varepsilon}^{\sigma}(\theta)$, by Lemma \ref{lem: reduction}, we may assume that $I$ is an interval $[0, \mu_{\infty})$ for some $\mu_{\infty} \in (0, \infty]$ and that $A$ is $[0, a]$ for some $a \in (0, \mu_{\infty})$ 
since the probability measures $\mu_-$ and $\mu_+$ constructed in Lemma \ref{lem: reduction} satisfy the same CDD condition that $\mu$ satisfies. 

Now, we fix $\sigma\in(0, 1)$. By the definition of the dilation area of $A$, we have 
\begin{align}\label{ineq: def}
\mu(A_{\varepsilon}) \geq \mu(A) +\mu^*(A)\varepsilon +o(\varepsilon). 
\end{align}
On the other hand, by the Taylor expansion of $R_{\varepsilon}^{\sigma}(\mu(A))$ at $\varepsilon=0$ and Proposition \ref{prop: concavity2}, we obtain 
\begin{align}\label{ineq: Taylor}
R_{\varepsilon}^{\sigma}(\mu(A)) = \mu(A) + \D_{K, N, D} (\mu(A))\varepsilon \sigma + o(\varepsilon).
\end{align}
Comparing \eqref{ineq: def} with \eqref{ineq: Taylor}, by Corollary \ref{MT1}, we see that there exists some small enough $\varepsilon_0 \in (0, 1)$ such that for any $\varepsilon \in (0, \varepsilon_0)$, 
$\mu(A_{\varepsilon}) \geq R_{\varepsilon}^{\sigma}(\mu(A))$ holds.

Let $\varepsilon_1 \in [0, \tau(\theta)]$ be the supremum of  the set of all $\bar{\varepsilon} \in (0, \tau(\theta))$ such that all $\varepsilon \in (0, \bar{\varepsilon})$ satisfy $\mu(A_{\varepsilon}) \geq R_{\varepsilon}^{\sigma}(\mu(A))$. By the argument above, we see that $\varepsilon_1>0$. 
Now, we suppose $\varepsilon_1<\tau(\theta)$, which will lead to a contradiction.
By the definition of $\varepsilon_1$, we have $\mu(A_{\varepsilon_1}) = R_{\varepsilon_1}^{\sigma}(\mu(A))$.
Since $\mu$ is supported on $I=[0, \mu_{\infty})$ and $A=[0, a]$, we see that $\mu(A_{\varepsilon_1}) = \mu([0, a/(1-\varepsilon_1)])$. 
We denote $[0, a/(1-\varepsilon_1)]$ by $B$. 
Then again by Corollary \ref{MT1} and the same argument above for $B$, we can take some small enough constant $\varepsilon_2 \in (0, 1)$ such that for any $\varepsilon \in (0, \varepsilon_2]$,  $\mu(B_{\varepsilon}) \geq R_{\varepsilon}^{\sigma}(\mu(B))$ holds. 
Now, fix a such $\varepsilon' \in (0, \varepsilon_2]$.
Since direct calculations imply  
\[
\mu(B_{\varepsilon'}) = \mu \left( \left[0, \frac{a}{(1-\varepsilon_1)(1-\varepsilon')}\right]\right) = \mu(A_{\varepsilon_3}), 
\]
where $\varepsilon_3 := 1- (1-\varepsilon_1)(1-\varepsilon') \in (0, 1)$, we obtain 
\begin{align}\label{ineq: R-mu}
R_{\varepsilon'}^{\sigma}(R_{\varepsilon_1}^{\sigma}(\mu(A))) = R_{\varepsilon'}^{\sigma}(\mu(B)) \leq \mu(B_{\varepsilon'}) = \mu(A_{\varepsilon_3}).
\end{align}
On the other hand, by the definition of $R_{\varepsilon}^{\sigma}$, we have 
\begin{align}\label{eq: R}
R_{\varepsilon'}^{\sigma}(R_{\varepsilon_1}^{\sigma}(\mu(A))) = R_{\varepsilon_3'}^{\sigma}(\mu(A)), 
\end{align}
where 
\[
\varepsilon_3' := \frac{1- (1-\varepsilon_1\sigma)(1-\varepsilon'\sigma)}{\sigma}.
\] 
Since $\sigma <1$ implies $\varepsilon_3' > \varepsilon_3$, we obtain $\mu(A_{\varepsilon_3}) \geq R_{\varepsilon_3}^{\sigma}(\mu(A))$ from \eqref{ineq: R-mu}, \eqref{eq: R} and the monotonicity of $R_{\varepsilon}^{\sigma}$ in $\varepsilon$. 
However,  this assertion contradicts the definition of $\varepsilon_1$ since $\varepsilon_3 = \varepsilon_1 +\varepsilon'(1-\varepsilon_1) > \varepsilon_1$ 
and $\varepsilon'$ is arbitrary in $(0, \varepsilon_2]$, 
where we may retake $\varepsilon_2$ such that $\varepsilon_1 +\varepsilon_2(1-\varepsilon_1) < \tau(\theta)$ holds if necessary.
Hence we obtain the desired assertion. 
\end{proof}

\section{Functional inequalities related to dilation profiles}

Some preceding investigations including Bobkov and Nazarov \cite{BN} and Fradelizi \cite{F} also studied the large and small deviation inequalities associated with certain parameters for a Borel function on $\R^n$ (more precisely, the modulus of regularity or the Remez function) via the $\varepsilon$-dilation inequalities, which are applied to establishing the Kahane-Khintchine type inequality.
In virtue of Thereom \ref{Klartag}, we can also see that the same inequalities hold under $CD(0, N)$ on a geodesically-convex $n$-dimensional weighted Riemannian manifold via the same arguments in the Euclidean setting.
In this section, we consider a new type of functional inequalities derived from the dilation profiles under $CD(0, N)$ with $N \in (-\infty, -1)\cup [n, \infty]$. 

\subsection{The case $N=\infty$}
Let $(\M, g, \m)$ be a geodesically-convex $n$-dimensional weighted Riemannian manifold. 
We first introduce the measured Remez function which is also used by Fradelizi \cite{F} without a measure.
\begin{Def}
Let $f :\M \to [0, \infty)$ be a Borel function. 
Given $s \geq 1$, we define $u_f(s)$ by the least constant $C\geq 1$ (including $\infty)$ satisfying   
\[
\m(\{ x \in \M ~|~ f(x) \leq \lambda\}_{1-1/s}) \leq \m(\{ x \in \M ~|~ f(x) \leq \lambda C \})
\]
for any $\lambda>0$. 
We say that a function $u_f:[1, \infty) \to [1, \infty]$ is the {\it measured Remez function} of $f$ if $u_f(s) < \infty$ for every $s\geq 1$ and it is continuous at $s=1$. 
\end{Def}

Equivalently, it holds that for any $\varepsilon \in (0, 1)$ and $\lambda>0$, 
\[
\m(\{ x \in \M ~|~ f(x) \leq \lambda\}_{\varepsilon}) \leq \m\left(\left\{ x \in \M ~\Bigg{|}~ f(x) \leq \lambda u_f\left(\frac{1}{1-\varepsilon}\right) \right\}\right).
\]
Every measured Remez function is non-decreasing and satisfies $u_f \geq 1$ on $[1, \infty)$ and $u_f(1)=1$. 
In addition, we define $u_f'(1)$ by 
\[
u_f'(1):=\limsup_{t\to1+0}\frac{u_f(t)-1}{t-1} >0.
\]
Note that a Borel function does not always have its measured Remez function. For instance, when $\m$ is the $n$-dimensional Lebesgue measure on $\R^n$, then the characteristic function $\mathbf{1}_A$ 
of any open proper subset $A \subset \R^n$ satisfies $u_{\mathbf{1}_A} = \infty$ on $(1, \infty)$. 
We can also deduce that for any $q, a>0$ and nonnegative Borel function $f$ with the measured Remez function, $u_{af^q}(s) = u_{f}(s)^q$ holds for every $s \in [1, \infty)$, which follows from the definition of the measured Remez function. Moreover since $u_{f}$ is continuous at $s=1$, we obtain $u_{af^q}'(1)= q u_{f}'(1)$.

\begin{Rem}\label{rem: rem3}
Fradelizi used the Remez function in \cite{F} depending only on a Borel function $f$. More precisely, 
for a given Borel function $f: \M \to \R$, its {\it Remez function} $\bar{u}_f:(1, \infty)\to [1, \infty)$ is defined as 
\[
\{ x \in \M ~|~ |f(x)| \leq \lambda\}_{1-1/s} \subset \{ x \in \M ~|~ |f(x)| \leq \lambda \bar{u}_f(s) \}
\]
for any $\lambda >0$. 
The definition of the Remez function immediately implies that $u_{|f|} \leq \bar{u}_f$ on $(1, \infty)$.
We also see that in general, these functions do not coincide. For instance, 
letting $(\R^2, \|\cdot\|_2^2, \m)$ be a weighted Riemannian manifold with a positive density on $\R^2$ and $f:\R^2 \to [0, \infty)$ be the characteristic function on $\R^2 \setminus I$ where $I$ is a closed segment in $\R^2$, we can deduce that $u_f \equiv 1$, but $\bar{u}_f \equiv \infty$ on $(1, \infty)$. 

\end{Rem}

According to \cite{F}, all norms $\|\cdot\|$ on $\R^n$ satisfy $u_{\|\cdot\|}(s)\leq2s-1$. 
More generally, all vector-valued polynomials $P$ of degree at most $d \geq 1$, namely
\begin{align}\label{eq: poly.}
P(x_1, x_2, \dots, x_n) = \sum_{i=1}^kP_i(x_1, x_2, \dots, x_n)e_i, \quad (x_1, x_2, \dots, x_n) \in \R^n, 
\end{align}
where $P_i(x_1, x_2, \dots, x_n)$ is a polynomial of degree at most $d$ for any $i=1, 2, \dots, k$ and $\{e_i\}_{i=1}^k$ is a basis in some normed vector space $(V, \|\cdot\|)$, satisfy $u_{\|P(\cdot)\|}(s) \leq T_d(2s-1)$, where $T_d$ is the Chebyshev polynomial of degree $d$ defined as 
\[
T_d(s) := \frac{ \left(s + \sqrt{s^2-1}\right)^d + \left(s - \sqrt{s^2-1}\right)^d}{2}
\]
(see also \cite{Bou}, \cite{B1}). We also note that the above estimates of the measured Remez functions are optimal for the Remez functions in the sense of Remark \ref{rem: rem3}. 
In particular, it is worth mentioning that under notations above, we have $u_{\|\cdot\|}'(1)\leq 2$ and $u_{\| P(\cdot)\|}'(1) \leq 2d^2$, which do not depend on the dimension of the base space. 

In order to describe our main claim in this subsection, we also introduce the relative entropy. 
Given a Borel function $f: \M \to [0, \infty)$, 
the relative entropy of $f$ with respect to $\m$ is defined by 
\begin{align*}
\Ent_{\m}(f) := \int_{\M}f\log f ~d\m - \int_{\M} f ~d\m\log\int_{\M} f~d\m.
\end{align*}
In particular, when $\nu = \rho \m$ is a probability measure on $\M$ where $\rho$ is a nonnegative Borel function on $\M$, 
then the relative entropy of $\nu$ with respect to $\m$ is defined by 
\[
\Ent_{\m}(\nu) :=\Ent_{\m}(\rho) =\int_{\M} \rho\log\rho ~d\m.
\]

Our main claim in this subsection is the following theorem.
\begin{Thm}\label{thm: entropy}
Let $(\M, g, \m)$ be a geodesically-convex $n$-dimensional weighted Riemannian manifold satisfying $\m(M)=1$ and $\Ric_{\infty} \geq 0$. 
Then for any probability measure $\nu:=\rho\m$ on $\M$, we have 
\[
\Ent_{\m}(\nu) \leq u_{\rho}'(1).
\]
\end{Thm}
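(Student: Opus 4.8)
The plan is to combine two ingredients: the $CD(0,\infty)$ dilation-area bound $\m^*(A)\ge -(1-\m(A))\log(1-\m(A))$ coming from Corollary~\ref{MT1}, Case~3 (where $\D_{0,\infty,\infty}(\theta)=-(1-\theta)\log(1-\theta)$), which by Remark~\ref{rem: rem1}(4) is valid for \emph{every} Borel set $A\subset\M$ because $K=0$, $N=\infty$ make $\D_{0,\infty,\infty}$ concave; and the defining inequality of the measured Remez function, applied to the sub-level sets $A_\lambda:=\{\rho\le\lambda\}$ of the density $\rho$ of $\nu$, so that $\Ent_\m(\nu)=\int_\M\rho\log\rho\,d\m$. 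Throughout I would write $m(\lambda):=\m(A_\lambda)$ and $g(\lambda):=1-m(\lambda)=\m(\{\rho>\lambda\})$, so $g$ is non-increasing and $\int_0^\infty g(\lambda)\,d\lambda=\int_\M\rho\,d\m=1$. The argument then splits into a measure-theoretic part and a geometric part.

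\textbf{Step 1: a measure-theoretic bound on the entropy.} First I would record the layer-cake identity $\Ent_\m(\nu)=1+\int_0^\infty g(\lambda)\log\lambda\,d\lambda$ (write $\rho\log\rho=\int_1^\infty u^{-1}\rho\,\mathbf{1}_{\{\rho>u\}}\,du-\int_0^1 u^{-1}\rho\,\mathbf{1}_{\{\rho<u\}}\,du$, apply Fubini, use $\int_\M\rho\,\mathbf{1}_{\{\rho>u\}}\,d\m=u\,g(u)+\int_u^\infty g$, and note that the divergent pieces $\int_0^1 du/u$ cancel). Next, since $g$ is non-increasing, $\lambda g(\lambda)\le\gamma(\lambda):=\int_0^\lambda g(u)\,du\le 1$, so $\log\lambda\le\log\gamma(\lambda)-\log g(\lambda)$ wherever $g(\lambda)>0$; multiplying by $g(\lambda)\ge0$, integrating, and using $\int_0^\infty g\log\gamma\,d\lambda=\int_0^\infty\gamma'\log\gamma\,d\lambda=[\gamma\log\gamma-\gamma]_0^\infty=-1$, I get the purely measure-theoretic bound
\[
\Ent_\m(\nu)\le \int_0^\infty -(1-m(\lambda))\log(1-m(\lambda))\,d\lambda .
\]

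\textbf{Step 2: the geometric input bounds the right-hand side.} For a.e. $\lambda$ I would estimate the integrand above from both sides. Applying the dilation-area bound to the Borel set $A_\lambda$ and using $1-\m(A_\lambda)=g(\lambda)$ gives $\m^*(A_\lambda)\ge -g(\lambda)\log g(\lambda)$. On the other hand, the definition of the measured Remez function $u_\rho$ yields $\m((A_\lambda)_\varepsilon)\le m\!\big(\lambda\,u_\rho(1/(1-\varepsilon))\big)$ for all $\varepsilon\in(0,1)$; dividing by $\varepsilon$, letting $\varepsilon\to0$, and using that $u_\rho$ is continuous at $1$ with $u_\rho(1)=1$, the meaning of $u_\rho'(1)$ as a one-sided $\limsup$ (so one works with $u_\rho'(1)+\eta$ and lets $\eta\to0$), and the a.e. differentiability of the non-decreasing $m$, I obtain $\m^*(A_\lambda)\le \lambda\,m'(\lambda)\,u_\rho'(1)$ for a.e. $\lambda$. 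Combining these two and integrating,
\[
\int_0^\infty -(1-m(\lambda))\log(1-m(\lambda))\,d\lambda\le u_\rho'(1)\int_0^\infty\lambda\,m'(\lambda)\,d\lambda\le u_\rho'(1)\int_0^\infty\lambda\,d(\rho_*\m)(\lambda)=u_\rho'(1),
\]
since $m'\,d\lambda\le d(\rho_*\m)$ for the law $\rho_*\m$ of $\rho$ and $\int\lambda\,d(\rho_*\m)=\int_\M\rho\,d\m=1$. Together with Step~1 this is exactly $\Ent_\m(\nu)\le u_\rho'(1)$.

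\textbf{Where the difficulty lies.} The conceptual heart is Step~1: it is monotonicity of the survival function $g$ that produces precisely the constant $-1$ cancelling the $+1$ of the layer-cake identity, so no curvature information enters there. The rest is analytic bookkeeping: justifying the cancellation of the divergent integrals and the integrability implicit in the layer-cake formula (the statement is vacuous when $\Ent_\m(\nu)=+\infty$), passing carefully from the $\limsup$ defining $u_\rho'(1)$ to the derivative estimate for $\m^*(A_\lambda)$, and handling a possible singular part of $\rho_*\m$ (for which only $\int\lambda\,m'\le\int\lambda\,d(\rho_*\m)$ is needed). One may instead run Step~2 through the $\varepsilon$-dilation inequality $\m(A_\varepsilon)\ge 1-(1-\m(A))^{1/(1-\varepsilon)}$ of Theorem~\ref{thm: MT2} (valid for all Borel $A$ when $K=0$ by the remark following it), which with the Remez estimate gives $\m(\{\rho>\lambda u_\rho(s)\})\le\m(\{\rho>\lambda\})^{s}$ and, after differentiating at $s=1$, the same pointwise inequality.
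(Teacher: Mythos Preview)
Your proof is correct and reaches the same destination as the paper, but the route to the key intermediate inequality
\[
\Ent_\m(\nu)\ \le\ -\int_0^\infty g(\lambda)\log g(\lambda)\,d\lambda
\]
is genuinely different. The paper obtains this by the Donsker--Varadhan variational formula: writing $-\log\m(A_\rho(t))=\Ent_\m\!\big(\m(A_\rho(t))^{-1}\mathbf{1}_{A_\rho(t)}\big)$, expressing each such entropy as a supremum over $\varphi\in C_b(\M)$, and then swapping $\sup$ and $\int_0^\infty$. You instead derive it by an elementary layer-cake/rearrangement computation, the crucial point being the monotonicity estimate $\lambda g(\lambda)\le\gamma(\lambda)=\int_0^\lambda g$, which produces the exact constant $-1$ that cancels the $+1$ from the layer-cake identity. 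Your Step~2 is essentially the content of the paper's Proposition~\ref{coarea type formula} combined with the $\D_{0,\infty,\infty}$ lower bound, only organized pointwise (via a.e.\ differentiability of $m$ and $m'\,d\lambda\le d(\rho_*\m)$) rather than integrally via Fatou's lemma; both are valid.

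What each buys: your argument is more self-contained and exposes that Step~1 is purely a statement about non-increasing survival functions, with no geometry involved. The paper's duality argument, on the other hand, generalizes verbatim to the $N$-entropies $U_N$ treated in Theorem~\ref{thm: general entropy}, where an analogous dual formula (Theorem~\ref{Simon}) replaces Donsker--Varadhan; your layer-cake trick with $\log\lambda\le\log\gamma(\lambda)-\log g(\lambda)$ is tailored to the logarithm and would require a separate computation for each $N$.
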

Before proving this theorem, we describe a relation to the logarithmic Sobolev inequality which is one of well-known functional inequalities related to the relative entropy.
We say that a weighted Riemannian manifold $(\M, g, \m)$ satisfies the logarithmic Sobolev inequality with a constant $C>0$ if every probability measure $\nu=\rho \m$ on $\M$ whose density $\rho$ is locally Lipschitz  
satisfies 
\[
2C \Ent_\m(\nu) \leq I_\m(\nu), 
\]
where $I_\m(\nu)$ is the Fisher information defined by 
\[
I_\m(\nu) := \int_\M |\nabla \log \rho|^2 ~d\nu. 
\]
Equivalently, the above definition means that every locally Lipschitz function $f$ on $\M$ satisfies 
\[
\frac{C}{2} \Ent_\m(f^2) \leq \int_\M |\nabla f|^2 ~d\m. 
\]
In general, if $(\M, g, \m)$ satisfies $\Ric_\infty \geq K$ for some $K>0$, then it satisfies the logarithmic Sobolev inequality with the constant $K$. 
Under $\Ric_\infty \geq 0$, Theorem \ref{thm: entropy} yields the following logarithmic Sobolev type inequality. 
\begin{Cor}
Let $(\M, g, \m)$ be a geodesically-convex $n$-dimensional weighted Riemannian manifold satisfying $\m(M)=1$ and $\Ric_{\infty} \geq 0$.
We also assume that $(\M, g, \m)$ satisfies the Poincar\'{e} inequality with a constant $C>0$ in the sense that, for any locally Lipschitz function 
$h :\M \to \R$, it holds that 
\[
\int_\M h^2 ~d\m - \left(\int_\M h ~ d\m\right)^2\leq \frac{1}{C}\int_\M |\nabla h|^2 ~d\m. 
\]  
Let $f:\M\to\R$ be a locally Lipschitz function and set $a := \int_\M f ~d\m$. 
We assume that $|f-a|$ has the measured Remez function $u_{|f-a|}$. 
Then we have 
\[
\Ent_{\m}( f^2) \leq \frac{2u_{|f-a|}'(1) + 2}{C} \int_\M |\nabla f|^2 ~d\m. 
\]
\end{Cor}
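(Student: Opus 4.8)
The plan is to split $\Ent_\m(f^2)$ into a centered part plus an $L^2$-remainder via Rothaus' inequality, to bound the centered part by Theorem \ref{thm: entropy} applied to the density proportional to $(f-a)^2$, and to control the remaining $L^2$-norm by the Poincar\'e inequality. First I would dispose of the trivial case in which $f=a$ holds $\m$-almost everywhere, where both sides vanish; so from now on set $v := \int_\M (f-a)^2\,d\m > 0$.

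\emph{Step 1 (Rothaus reduction).} I would invoke Rothaus' inequality: for any locally Lipschitz $f$ with $a = \int_\M f\,d\m$ and $\m(\M)=1$,
\[
\Ent_\m(f^2) \le \Ent_\m\!\big((f-a)^2\big) + 2\int_\M (f-a)^2\,d\m .
\]
This is the classical device for passing from a ``tight'' entropy bound to a logarithmic Sobolev type bound, and it needs only that $\m$ is a probability measure, which holds here.

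\emph{Step 2 (applying Theorem \ref{thm: entropy}).} Put $\rho := (f-a)^2/v$, a probability density on $\M$, and $\nu := \rho\,\m$. Since $\Ent_\m$ is positively homogeneous of degree one and $\int_\M \rho\,d\m = 1$, we have $\Ent_\m\!\big((f-a)^2\big) = v\,\Ent_\m(\rho) = v\,\Ent_\m(\nu)$. Because $(\M,g,\m)$ satisfies $\m(\M)=1$ and $\Ric_\infty \ge 0$, Theorem \ref{thm: entropy} gives $\Ent_\m(\nu) \le u_\rho'(1)$. Now $\rho = c\,|f-a|^2$ with $c = 1/v > 0$, so the scaling identity $u_{cg^q}'(1) = q\,u_g'(1)$ recorded earlier (with $g = |f-a|$ and $q=2$) shows that $\rho$ inherits a measured Remez function from $|f-a|$ and that $u_\rho'(1) = 2\,u_{|f-a|}'(1)$. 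Hence $\Ent_\m\!\big((f-a)^2\big) \le 2v\,u_{|f-a|}'(1)$.

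\emph{Step 3 (Poincar\'e and conclusion).} Applying the hypothesized Poincar\'e inequality with $h = f$ yields $v = \int_\M f^2\,d\m - a^2 \le \frac{1}{C}\int_\M |\nabla f|^2\,d\m$. Substituting the bounds of Steps 1 and 2,
\[
\Ent_\m(f^2) \le \big(2\,u_{|f-a|}'(1) + 2\big)\,v \le \frac{2\,u_{|f-a|}'(1) + 2}{C}\int_\M |\nabla f|^2\,d\m ,
\]
which is the asserted inequality. The only genuinely external ingredient is Rothaus' inequality in Step 1; everything else is bookkeeping with the homogeneity of $\Ent_\m$ and the scaling behaviour of the measured Remez function. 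Accordingly the main point to watch is Step 2: one must make sure that $\rho$ really has a finite measured Remez function that is continuous at $s=1$, so that $u_\rho'(1) < \infty$ and the identity $u_\rho'(1) = 2\,u_{|f-a|}'(1)$ applies — but this is precisely what the hypothesis that $|f-a|$ has the measured Remez function guarantees, since $u_\rho = u_{|f-a|}^2$ on $[1,\infty)$.
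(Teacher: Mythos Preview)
Your proof is correct and follows essentially the same route as the paper: Rothaus' lemma, then Theorem \ref{thm: entropy} applied to the normalized density $(f-a)^2/v$ together with $u_{(f-a)^2/v}'(1)=2u_{|f-a|}'(1)$, and finally the Poincar\'e inequality. You are simply more explicit than the paper about the normalization and the homogeneity of $\Ent_\m$, and you handle the degenerate case $f\equiv a$ separately, but the argument is the same.
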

\begin{proof}
Rothaus' lemma (for instance, see \cite[Lemma 5.1.4]{BGL}) yields that 
\[
\Ent_\m(f^2) \leq \Ent_\m((f-a)^2) + 2 \int_\M( f - a)^2 ~d\m. 
\]
Combining this inequality with Theorem \ref{thm: entropy} and $u_{|f-a|^2}'(1) = 2 u_{|f-a|}'(1)$, we obtain 
\[
\Ent_\m(f^2) \leq (2 u_{|f-a|}'(1) +2) \int_\M \left( f - \int_\M f~d\m\right)^2 ~d\m, 
\]
and finally, the Poincar\'{e} inequality yields 
\[
\Ent_{\m}( f^2) \leq \frac{2u_{|f-a|}'(1) + 2}{C} \int_\M |\nabla f|^2 ~d\m. 
\]
\end{proof}
Note that all log-concave probability measures on $\R^n$ (equivalently, $\Ric_\infty \geq 0$) satisfy the Poincar\'{e} inequality (for instance, see \cite{B99}). 
For the Poincar\'{e} inequality on a weighted Riemannian manifold with $\Ric_\infty\geq 0$, see \cite{Mi09}. 
In general, it is known that weighted Riemannian manifolds with $\Ric_\infty \geq 0$ do not always satisfy the logarithmic Sobolev inequality, and hence we need to add an appropriate assumption.  
For instance, the logarithmic Sobolev inequality under the Gaussian isoperimetric inequality is investigated in \cite{B99}. 
 
In order to show Theorem \ref{thm: entropy}, we first prove the following proposition which is regarded as a weak co-area type formula on dilation areas. 
For simplicity, given a Borel function $f:\M \to [0, \infty)$, we set $A_f(t) :=\{x \in \M ~|~ f(x) > t\}$ for $t\geq 0$.
\begin{Prop}\label{coarea type formula}
Let $(\M, g, \m)$ be a geodesically-convex $n$-dimensional weighted Riemannian manifold satisfying $\m(\M)=1$ and let $f :\M \to [0, \infty)$ be a Borel measurable function with the measured Remez function $u_f$. 
Then, we have 
\begin{align}\label{ineq: coarea}
\int_0^{\infty} \m^*(\M\setminus A_f(t)) ~dt \leq u_f'(1) \int_\M f ~d\m. 
\end{align}
\end{Prop}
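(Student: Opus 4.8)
The plan is to reduce \eqref{ineq: coarea} to the one--dimensional distribution function of $f$ and then compute. Write $G(\lambda):=\m(\M\setminus A_f(\lambda))=\m(\{x\in\M\mid f(x)\le\lambda\})$; it is non-decreasing in $\lambda$, with $G(\lambda)\to\m(\M)=1$ as $\lambda\to\infty$ since $f$ is finite--valued. Put $m:=\int_\M f\,d\m$; if $m=\infty$ the asserted bound is vacuous (as $u_f'(1)>0$), so assume $m<\infty$. The starting point is the defining property of the measured Remez function: for every $\varepsilon\in(0,1)$ and $\lambda>0$, since $\{f\le\lambda\}=\M\setminus A_f(\lambda)$, it gives
\[
\m\big((\M\setminus A_f(\lambda))_\varepsilon\big)\le\m\big(\M\setminus A_f(\lambda\, c(\varepsilon))\big)=G(\lambda\, c(\varepsilon)),\qquad c(\varepsilon):=u_f\!\left(\tfrac{1}{1-\varepsilon}\right)\ge1.
\]
Subtracting $G(\lambda)=\m(\M\setminus A_f(\lambda))$, dividing by $\varepsilon$, and taking $\liminf_{\varepsilon\to0}$ yields, for each fixed $\lambda>0$,
\[
\m^*(\M\setminus A_f(\lambda))\le\liminf_{\varepsilon\to0}\frac{G(\lambda\, c(\varepsilon))-G(\lambda)}{\varepsilon}.
\]
Here one must pass through the $\liminf$ rather than bound by the value at a single $\varepsilon$, because $\m^*$ is itself a $\liminf$; note also that the right--hand side is a measurable function of $\lambda$, so there is no measurability obstruction to the left--hand integral in \eqref{ineq: coarea}.

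Next I would integrate this pointwise bound over $\lambda\in(0,\infty)$ and interchange the integral with the $\liminf$. For each fixed $\varepsilon$ the integrand $\lambda\mapsto\varepsilon^{-1}(G(\lambda\, c(\varepsilon))-G(\lambda))$ is non-negative, because $c(\varepsilon)\ge1$ and $G$ is non-decreasing; hence Fatou's lemma applies, along a sequence $\varepsilon_k\downarrow0$ realizing the outer $\liminf$, and gives
\[
\int_0^\infty\m^*(\M\setminus A_f(\lambda))\,d\lambda\le\liminf_{\varepsilon\to0}\int_0^\infty\frac{G(\lambda\, c(\varepsilon))-G(\lambda)}{\varepsilon}\,d\lambda.
\]
The inner integral is evaluated by the layer--cake formula: $\int_0^\infty(1-G(\lambda))\,d\lambda=\int_0^\infty\m(A_f(\lambda))\,d\lambda=m$, and the substitution $u=\lambda c$ gives $\int_0^\infty(1-G(\lambda c))\,d\lambda=m/c$ for $c\ge1$, so
\[
\int_0^\infty\big(G(\lambda\, c)-G(\lambda)\big)\,d\lambda=\int_0^\infty\big[(1-G(\lambda))-(1-G(\lambda c))\big]\,d\lambda=m\Big(1-\tfrac1c\Big),
\]
all integrals being finite. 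Thus the right--hand side above equals $m\cdot\liminf_{\varepsilon\to0}\dfrac{c(\varepsilon)-1}{\varepsilon\, c(\varepsilon)}$.

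Finally I would identify this limit. Writing $s=1/(1-\varepsilon)$, so $\varepsilon=(s-1)/s$ and $c(\varepsilon)=u_f(s)$, we have $\dfrac{c(\varepsilon)-1}{\varepsilon\, c(\varepsilon)}=\dfrac{s}{u_f(s)}\cdot\dfrac{u_f(s)-1}{s-1}$; as $\varepsilon\to0$ we get $s\to1^+$, and the continuity of $u_f$ at $1$ (with $u_f(1)=1$) forces $s/u_f(s)\to1$, whence $\liminf_{\varepsilon\to0}\frac{c(\varepsilon)-1}{\varepsilon\,c(\varepsilon)}=\liminf_{s\to1^+}\frac{u_f(s)-1}{s-1}\le\limsup_{s\to1^+}\frac{u_f(s)-1}{s-1}=u_f'(1)$. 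Combining the displays gives $\int_0^\infty\m^*(\M\setminus A_f(\lambda))\,d\lambda\le m\,u_f'(1)=u_f'(1)\int_\M f\,d\m$, i.e.\ \eqref{ineq: coarea}. The only genuinely delicate point is the interchange of $\liminf$ and $\int$: it is clean precisely because the fixed--$\varepsilon$ integrand is non-negative (a consequence of $u_f\ge1$), so Fatou suffices and no integrability estimate on the difference quotient is required; the remaining care is simply to remember that $\m^*$ is a $\liminf$, so one compares difference quotients $\varepsilon$--by--$\varepsilon$ before taking limits. Everything else is the layer--cake identity and the elementary asymptotics of $c(\varepsilon)$.
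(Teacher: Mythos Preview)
Your proof is correct and follows essentially the same route as the paper's: bound the dilated sublevel set via the measured Remez function, use Fatou to swap $\liminf_{\varepsilon\to0}$ with $\int_0^\infty$, evaluate the resulting integral by the layer--cake identity and the substitution $u=\lambda c(\varepsilon)$ to get $m(1-1/c(\varepsilon))$, and finish with the asymptotics $\liminf_{\varepsilon\to0}\varepsilon^{-1}(1-1/c(\varepsilon))\le u_f'(1)$. The only cosmetic difference is that the paper applies Fatou to the raw quotient $(\m(B(t)_\varepsilon)-\m(B(t)))/\varepsilon$ and inserts the Remez bound afterwards, whereas you insert the Remez bound first and then apply Fatou; both orderings are valid since the integrands are non-negative, and they lead to the same final expression.
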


\begin{proof}
We put $B(t):=\{ x \in \M ~|~ f(x) \leq t \}=\M \setminus A_f(t)$ for $t \geq 0$. 
By the definition of the measured Remez function, we deduce that 
\begin{align*}
\int_0^{\infty} \m^*(B(t)) ~dt &= \int_0^{\infty} \liminf_{\varepsilon \to 0} \frac{\m(B(t)_{\varepsilon}) -\m(B(t))}{\varepsilon} ~dt \leq \liminf_{\varepsilon \to 0} \int_0^{\infty}  \frac{\m(B(t)_{\varepsilon}) -\m(B(t))}{\varepsilon} ~dt \\
 & \leq \liminf_{\varepsilon \to 0} \int_0^{\infty}  \frac{\m\left(B\left(tu_f\left(\frac{1}{1-\varepsilon}\right) \right) \right) -\m(B(t))}{\varepsilon} ~dt \\
 &= \liminf_{\varepsilon \to 0} \frac{1}{\varepsilon}\int_0^{\infty} \left\{ \m( A_f(t) ) -\m\left(A_f\left(tu_f\left(\frac{1}{1-\varepsilon}\right)\right)\right)\right\} ~dt \\
 & =  \liminf_{\varepsilon \to 0} \frac{1}{\varepsilon} \left( 1-\frac{1}{u_f\left(\frac{1}{1-\varepsilon}\right)} \right) \int_{\M} f ~d\m \\
&\leq u_f'(1) \int_{\M} f ~d\m.
\end{align*}
\end{proof}

Note that Proposition \ref{coarea type formula} is optimal in the following sense. Let $\m$ be a probability measure on $\R_+:=[0, \infty)$
whose density is $e^{-x}$ and define $f: \R_+ \to [0, \infty)$ by $f(x)=x$. 
Since $[0, a]_{\varepsilon} \cap [0, \infty) = [0, a/(1-\varepsilon)]$ for any $a>0$ and $\varepsilon \in (0, 1)$, we can easily find that
$f$ has the measured Remez function of the form $u_f(s)=s$ for any $s \in [1, \infty)$, and hence we have $u_f'(1)=1$. 
We also see that $\int_{\R_+} f ~d\m$=1. 
Thus, the right hand side of \eqref{ineq: coarea} becomes $1$. 
On the other hand, since $\m$ and an interval $[0, \cdot]$ are the extremals of the dilation inequality \eqref{b} for $N=\infty$, and $A_f(t) =(t, \infty)$ for any $t\geq 0$, we obtain 
\[
\int_0^{\infty} \m^*(\M\setminus A_f(t)) ~dt =- \int_0^{\infty} \m(A_f(t)) \log\m(A_f(t)) ~dt = \int_0^\infty t e^{-t} ~dt =1, 
\]
where we used $\m(A_f(t))= \int_t^\infty e^{-x}~dx = e^{-t}$. 
Therefore, equality holds in \eqref{ineq: coarea} for $\m$ and $f$ above.

Now, we shall prove Theorem \ref{thm: entropy}. 
\begin{proof}[Proof of Theorem \ref{thm: entropy}]
Since $\Ric_{\infty} \geq 0$, it follows from \eqref{b} and Proposition \ref{coarea type formula} that we have 
\begin{align}\label{ineq: m1}
-\int_0^{\infty} \m(A_{\rho}(t)) \log\m(A_{\rho}(t)) ~dt \leq u_{\rho}'(1).
\end{align}
Now, recall the dual formula of the relative entropy (for instance, see \cite{V}): for any Borel function $f:\M \to [0, \infty)$ with $\int_\M f ~d\m =1$, 
it holds that 
\[
\Ent_\m(f) =\sup_{\varphi \in C_b(\M)}\left[ \int_\M f\varphi ~d\m - \log\int_\M e^{\varphi} ~d\m\right], 
\]
where $C_b(\M)$ is the set of all bounded continuous functions on $\M$. 
Hence, since $\Ent_{\m}(\m(A)^{-1}\mathbf{1}_A) = -\log\m(A)$ and $\int_\M \m(A)^{-1}\mathbf{1}_A ~d\m=1$ for any Borel subset $A \subset \M$ with $\m(A)>0$, the left hand side of \eqref{ineq: m1} becomes
\begin{align*}
-\int_0^{\infty} \m(A_{\rho}(t)) \log\m(A_{\rho}(t)) ~dt
 &= \int_0^\infty \m(A_\rho(t)) \Ent_\m(\m(A_\rho(t))^{-1}\mathbf{1}_{A_\rho(t)}) ~dt \\
 &= \int_0^\infty \sup_{\varphi \in C_b(\M)}\left[ \int_\M \varphi \mathbf{1}_{A_\rho(t)} ~d\m - \m(A_\rho(t))\log\int_\M e^{\varphi} ~d\m\right] ~dt\\
&\geq \sup_{\varphi \in C_b(\M)} \left[ \int_\M\int_0^\infty \varphi \mathbf{1}_{A_\rho(t)} ~dtd\m - \int_0^\infty \m(A_\rho(t))~dt \log\int_\M e^{\varphi} ~d\m\right] \\
&=\sup_{\varphi \in C_b(\M)} \left[ \int_\M \varphi\rho ~d\m - \log\int_\M e^{\varphi} ~d\m\right] \\
&=\Ent_\m(\nu), 
\end{align*}
where we used $\int_0^\infty \mathbf{1}_{A_\rho(t)}(x) ~dt=\rho(x)$ for every $x \in \M$ and $\int_0^\infty \m(A_\rho(t))~dt = \int_\M \rho~d\m =1$. 
\end{proof}
\begin{Rem}
We can replace the right hand side of \eqref{ineq: coarea} with a different form. 
Given a Borel function $f: \M \to [0, \infty)$ and $\varepsilon \in [0, 1)$, we define a function $f_{\varepsilon}: \M \to [0, \infty)$ by 
\[
f_{\varepsilon}(x) := \inf\{\lambda>0 ~|~ x \in f^{-1}([0, \lambda])_\varepsilon\}, \quad x \in \M.
\]
Note that $f_0 = f$ and $f_\varepsilon \leq f$ on $\M$ for any $\varepsilon \in (0, 1)$. 
We also define a function $\Phi_f : \M \to [0, \infty]$ by 
\[
\Phi_f(x) := \limsup_{\varepsilon \to 0} \frac{f(x) - f_\varepsilon(x)}{\varepsilon}, \quad x \in \M.
\]
For instance, when $f$ is a norm $\|\cdot\|_K$ on $\R^n$ whose unit ball is a centrally symmetric convex body $K \subset \R^n$, 
we can see that $f_\varepsilon = \|\cdot\|_{K_{\varepsilon}} = \|\cdot\|_{\frac{1+\varepsilon}{1-\varepsilon}K}$, and hence we obtain $\Phi_f=2\|\cdot\|_K$.

Now, given a Borel function $f: \M \to [0, \infty)$, when $f_\varepsilon$ is also a Borel function for small enough $\varepsilon >0$, then we can prove that 
\begin{align*}
\int_0^{\infty} \m^*(\M\setminus A_f(t)) ~dt \leq \int_\M \Phi_f ~d\m
\end{align*}
by the same argument as in Proposition \ref{coarea type formula} since we have $\{f \leq \lambda\}_\varepsilon \subset \{f_\varepsilon \leq \lambda\}$ for any $\lambda>0$ and $\varepsilon \in (0, 1)$. 
Moreover, combining this inequality with the argument in the proof of Theorem \ref{thm: entropy}, we obtain 
\begin{align*}
\Ent_\m(f) \leq \int_\M \Phi_f ~d\m.
\end{align*}
\end{Rem}

As the corollary of Theorem \ref{thm: entropy}, we describe the following Kahane-Khintchine type inequality.
\begin{Cor}\label{cor: reverse}
Let $(\M, g, \m)$ be a geodesically-convex $n$-dimensional weighted Riemannian manifold satisfying $\m(M)=1$ and $\Ric_{\infty} \geq 0$. 
Let $f:\M \to \R$ be an integrable function and the measured Remez function $u_{|f|}(s)$. 
Then for any $0<p\leq q<\infty$, we have  
\begin{align}\label{ineq: rev. Holder}
\left( \int_\M |f|^q ~d\m\right)^{1/q} \leq \left(\frac{q}{p}\right)^{u_{|f|}'(1)}\left(\int_\M|f|^p~d\m\right)^{1/p}.
\end{align}
\end{Cor}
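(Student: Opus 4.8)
The plan is to reduce inequality~\eqref{ineq: rev. Holder} to the entropy estimate of Theorem~\ref{thm: entropy} through the classical fact that the logarithmic derivative of $r\mapsto\|f\|_{L^r(\m)}$ is, up to the factor $r^{-2}$, a relative entropy. First I would dispose of trivial cases by assuming $f$ is not $\m$-a.e.\ zero, and then reduce to bounded $f$: replacing $|f|$ by $f_M:=\min\{|f|,M\}$, one checks directly from the definition that $u_{f_M}\le u_{|f|}$ pointwise on $[1,\infty)$ (the sublevel sets of $f_M$ and $|f|$ agree below the level $M$, and every sublevel set of $f_M$ above $M$ is all of $\M$), so $f_M$ has a measured Remez function with $u_{f_M}'(1)\le u_{|f|}'(1)$; once the inequality is known for each $f_M$, letting $M\to\infty$ and using monotone convergence for $\int_\M f_M^r\,d\m$ at $r=p$ and $r=q$ gives the claim for $f$ (and in particular covers the case where some of these integrals are infinite). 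So from now on I would assume $0\le f\le M$ with $0<\int_\M f^r\,d\m<\infty$ for all $r>0$.

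Set $M(r):=\int_\M f^r\,d\m$ and $\phi(r):=\frac1r\log M(r)$, so that $\|f\|_{L^r(\m)}=e^{\phi(r)}$. Differentiating under the integral sign (legitimate since $f$ is bounded and $\m$ is finite) gives $M'(r)=\int_\M f^r\log f\,d\m$, and a short computation shows
\[
r^2 M(r)\,\phi'(r)=rM'(r)-M(r)\log M(r)=\int_\M f^r\log f^r\,d\m-M(r)\log M(r)=\Ent_\m(f^r).
\]
Because $\Ent_\m$ is positively $1$-homogeneous, $\Ent_\m(f^r)=M(r)\,\Ent_\m(\nu_r)$, where $\nu_r:=\rho_r\,\m$ is the probability measure with density $\rho_r:=f^r/M(r)$; hence $\phi'(r)=r^{-2}\,\Ent_\m(\nu_r)$ for every $r>0$.

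Next I would apply Theorem~\ref{thm: entropy} to $\nu_r$: this is allowed because $\m(\M)=1$, $\Ric_\infty\ge0$, and $\rho_r$ is a positive constant times $f^r$, so by the scaling identity $u_{a f^r}(s)=u_{|f|}(s)^r$ recorded before Remark~\ref{rem: rem3} it has a measured Remez function with $u_{\rho_r}'(1)=r\,u_{|f|}'(1)$. Theorem~\ref{thm: entropy} then gives $\Ent_\m(\nu_r)\le u_{\rho_r}'(1)=r\,u_{|f|}'(1)$, so $\phi'(r)\le u_{|f|}'(1)/r$ for all $r>0$. Integrating over $[p,q]$,
\[
\log\frac{\|f\|_{L^q(\m)}}{\|f\|_{L^p(\m)}}=\phi(q)-\phi(p)=\int_p^q\phi'(r)\,dr\le u_{|f|}'(1)\int_p^q\frac{dr}{r}=u_{|f|}'(1)\log\frac{q}{p},
\]
and exponentiating yields~\eqref{ineq: rev. Holder}. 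The computation itself is routine once Theorem~\ref{thm: entropy} is in hand; the only points requiring care are the reduction to bounded $f$ (so that $\phi$ is smooth and all integrals finite and positive) and the bookkeeping that the measured Remez function of $\rho_r$ is controlled by that of $|f|$, which I expect to be the main — though still mild — obstacle.
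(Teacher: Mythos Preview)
Your proof is correct and follows essentially the same approach as the paper: define $\phi(r)=\frac1r\log\int_\M|f|^r\,d\m$, compute $\phi'(r)=r^{-2}\Ent_\m(\nu_r)$ with $\nu_r=\rho_r\m$, apply Theorem~\ref{thm: entropy} together with $u_{\rho_r}'(1)=r\,u_{|f|}'(1)$ to get $\phi'(r)\le u_{|f|}'(1)/r$, and integrate. The truncation step you add to justify differentiation under the integral is extra care that the paper omits, but the core argument is identical.
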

\begin{proof}
We recall that given $q>0$ and $a>0$, we have $u_{a|f|^q}'(1)= q u_{|f|}'(1)$. 

Now, we define a function $\Lambda: (0, \infty) \to \R$ by 
\[
\Lambda(q):= \frac{1}{q}\log \left( \int_\M |f|^q ~d\m\right). 
\]
Then considering the probability measure $\mu_q := \rho_q \m$ with
\[
\rho_q(x) := \frac{|f(x)|^q}{\int_\M |f|^q ~d\m}, \quad x \in \M,
\]
we see that 
\begin{align*}
\Lambda'(q) = -\frac{1}{q^2}\log\left(\int_\M |f|^q ~d\m\right) + \frac{1}{q}\cdot\frac{\int_\M |f|^q\log|f| ~d\m}{\int_\M |f|^q ~d\m}
	= \frac{1}{q^2}\Ent_\m(\mu_q). 
\end{align*}
Thus it follows from Theorem \ref{thm: entropy} that we obtain 
\[
\Lambda'(q) \leq \frac{1}{q^2} u_{\rho_q}'(1) = \frac{1}{q} u_{|f|}'(1)
\]
for all $q>0$, which yields the desired assertion by integration.
\end{proof}
For $1\leq p\leq q<\infty$, H\"{o}lder's inequality yields $(\int_\M |f|^p ~d\m)^{1/p} \leq (\int_\M |f|^q ~d\m)^{1/q}$ for any Borel function $f$ on $\M$, and in this sense, 
\eqref{ineq: rev. Holder} is also mentioned as the reverse H\"{o}lder inequality. 
In general, it is well-known that Borell's lemma \eqref{Borell's lemma} yields the following reverse H\"{o}lder inequality on $\R^n$ (for instance, see \cite[Theorem 2.4.6]{BGVV}): 
\[
\left( \int_{\R^n} \|x\|^q ~d\mu(x)\right)^{1/q} \leq C\frac{q}{p}\left(\int_{\R^n}\|x\|^p~d\mu(x)\right)^{1/p}
\]
for any log-concave probability measure $\mu$ and norm $\|\cdot\|$ on $\R^n$, where $C>0$ is an absolute constant. 
On the other hand, under the same notations, \eqref{ineq: rev. Holder} yields that by $u_{\|\cdot\|}'(1)\leq2$, 
\begin{align}\label{ineq: distance}
\left( \int_{\R^n} \|x\|^q ~d\mu(x)\right)^{1/q} \leq \left(\frac{q}{p}\right)^2\left(\int_{\R^n}\|x\|^p~d\mu(x)\right)^{1/p}.
\end{align}
In particular, our inequality is meaningful when $p$ and $q$ are close to each other. 
Moreover, when $\mu_0$ is a probability measure on $[0, \infty)$ whose density with respect to the 1-dimensional Lebesgue measure is $e^{-x}$, since we see that for any $n \in \mathbb{N}$, the measured Remez function of the $\ell^\infty$-norm $\|\cdot\|_\infty$ with respect to $\mu_0^{\otimes n}$ in $[0, \infty)^n$ satisfies $u_{\|\cdot\|_\infty}(s)=s$ for every $s \geq 1$ by the same discussions after Proposition \ref{coarea type formula}, Corollary \ref{cor: reverse} yields 
\[
\left( \int_{\R^n} \|x\|_\infty^q ~d\mu_0^{\otimes n}(x)\right)^{1/q} \leq \frac{q}{p} \left(\int_{\R^n}\|x\|_\infty^p~d\mu_0^{\otimes n}(x)\right)^{1/p}
\]
for all $n \in \mathbb{N}$. 

More generally, the following reverse H\"{o}lder inequality for polynomials can be easily proved by combining Corollary \ref{cor: reverse} with the comments after Remark \ref{rem: rem3}. 
\begin{Cor}
Let $\Omega \subset \R^n$ be a convex open subset and $\mu$ be a log-concave probability measure supported on $\Omega$. 
We also take a normed vector space $(V, \|\cdot\|)$. Then for any vector-valued polynomial $P$ of degree at most $d\geq 1$ from $\Omega$ to $V$ defined as \eqref{eq: poly.} and $0<p\leq q<\infty$, 
we have 
\[
\left( \int_{\R^n} \|P(x)\|^q ~d\mu(x)\right)^{1/q} \leq \left(\frac{q}{p}\right)^{2d^2}\left(\int_{\R^n}\|P(x)\|^p~d\mu(x)\right)^{1/p}.
\]
\end{Cor}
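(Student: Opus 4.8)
The plan is to deduce the statement from Corollary~\ref{cor: reverse} applied to the function $f := \|P(\cdot)\|$ on the weighted Riemannian manifold $(\Omega, \|\cdot\|_2^2, \mu)$. First I would observe that, by Borell's characterization of log-concave measures recalled in the introduction, a log-concave probability measure $\mu$ supported on a convex open set $\Omega \subset \R^n$ makes $(\Omega, \|\cdot\|_2^2, \mu)$ a geodesically-convex $n$-dimensional weighted Riemannian manifold satisfying $\mu(\Omega)=1$ and $\Ric_\infty \geq 0$, so the hypotheses on the ambient space in Corollary~\ref{cor: reverse} are met with $\m := \mu$. Moreover $\|P(x)\|$ is bounded above by a polynomial in $|x|$, and log-concave probability measures have finite moments of all orders, so $f = \|P(\cdot)\|$ is integrable and all the integrals in the statement are finite.

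Next I would invoke the polynomial Remez estimate quoted after Remark~\ref{rem: rem3}: for a vector-valued polynomial $P$ of degree at most $d \ge 1$ one has $u_{\|P(\cdot)\|}(s) \le T_d(2s-1)$ for every $s \ge 1$, where $T_d$ is the Chebyshev polynomial of degree $d$. Since $T_d(1)=1$ and $T_d$ is continuous, combining this with the trivial bound $u_{\|P(\cdot)\|} \ge 1$ forces $u_{\|P(\cdot)\|}(1)=1$ and continuity at $s=1$, so $f$ indeed has a measured Remez function. Differentiating the inequality at $s=1$ then gives
\[
u_{\|P(\cdot)\|}'(1) = \limsup_{s \to 1+}\frac{u_{\|P(\cdot)\|}(s)-1}{s-1} \le \left.\frac{d}{ds}\right|_{s=1} T_d(2s-1) = 2\,T_d'(1) = 2d^2,
\]
using the classical identity $T_d'(1)=d^2$.

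Finally, plugging $f = \|P(\cdot)\|$ into \eqref{ineq: rev. Holder} of Corollary~\ref{cor: reverse} yields, for $0 < p \le q < \infty$,
\[
\left(\int_{\R^n}\|P(x)\|^q~d\mu(x)\right)^{1/q} \le \left(\frac{q}{p}\right)^{u_{\|P(\cdot)\|}'(1)}\left(\int_{\R^n}\|P(x)\|^p~d\mu(x)\right)^{1/p},
\]
and since $q/p \ge 1$ and $u_{\|P(\cdot)\|}'(1) \le 2d^2$, the factor $(q/p)^{u_{\|P(\cdot)\|}'(1)}$ is at most $(q/p)^{2d^2}$, which gives the claim. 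The only genuinely nontrivial input is the Remez-type bound $u_{\|P(\cdot)\|}(s) \le T_d(2s-1)$ together with the evaluation $T_d'(1)=d^2$; both are classical (see the references cited after Remark~\ref{rem: rem3}), so I expect this to be the main — and essentially only — obstacle, the rest being a direct application of earlier results.
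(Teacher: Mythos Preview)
Your proposal is correct and follows exactly the route the paper intends: apply Corollary~\ref{cor: reverse} to $f=\|P(\cdot)\|$ on $(\Omega,\|\cdot\|_2^2,\mu)$, using the bound $u_{\|P(\cdot)\|}'(1)\le 2d^2$ recorded after Remark~\ref{rem: rem3}. You have simply spelled out the verification of the hypotheses (the $\Ric_\infty\ge0$ setup, integrability, continuity of $u_{\|P(\cdot)\|}$ at $1$, and the identity $T_d'(1)=d^2$) that the paper leaves implicit.
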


We close this subsection by describing the reverse H\"{o}lder inequality for the distance function on a weighted Riemannian manifold corresponding to \eqref{ineq: distance}. 
Let $(\M, g)$ be a geodesically-convex $n$-dimensional  Riemannian manifold and $d_g$ be the distance function induced by $g$. 
Now, fix $x_0 \in \M$ and define $f:\M \to \R$ as $f(x) := d_g(x, x_0)$. 
Then we can deduce that $f$ has the Remez function in the sense of Remark \ref{rem: rem3} with $\bar{u}_{f}(s) \leq 2s-1$ for every $s\geq 1$ as follows. 
Denote by $B(r)$ the open ball centered at $x_0$ with a radius $r>0$. 
It suffices to prove that 
\begin{align}\label{ball}
B(r)_\varepsilon \subset B\left(\frac{1+\varepsilon}{1-\varepsilon}r\right) 
\end{align}
for any $r>0$ and $\varepsilon \in (0, 1)$.
First, note that given different two points $x, y \in \M$, letting $\gamma_{xy}: [0, 1] \to \M$ be a minimizing geodesic from $x$ to $y$, the triangle inequality yields $f(\gamma_{xy}(t)) \geq |(1-t)f(x)-tf(y)|$. 

Now, fix $r>0$ and $\varepsilon \in (0, 1)$, and take $x \in B(r)_\varepsilon$. 
By the definition of the $\varepsilon$-dilation, we can take $y \in \M$ such that $|B(r) \cap \gamma_{xy}| > 1-\varepsilon$ holds. 
In addition, we may assume that  $y$ belongs to $\overline{B(r)}$ by simple observations. 
Then we see that every $t \in [0, 1]$ with $\gamma_{xy}(t) \in B(r)$ satisfies 
\[
r \geq f(\gamma_{xy}(t)) \geq |(1-t)f(x) - tf(y)| \geq (1-t)f(x) - tf(y), 
\]
and hence we obtain 
\[
|B(r) \cap \gamma_{xy}| \leq \left| \left[\frac{f(x) -r}{f(x)+f(y)}, 1\right]\right| = \frac{r+f(y)}{f(x)+f(y)}. 
\]
Since we have $|B(r) \cap \gamma_{xy}| > 1-\varepsilon$, it yields 
\[
f(x) < \frac{r}{1-\varepsilon}+ \frac{\varepsilon}{1-\varepsilon}f(y) \leq \frac{1+\varepsilon}{1-\varepsilon}r, 
\]
which implies $x \in B((1+\varepsilon)r/(1-\varepsilon))$. Hence, we obtain \eqref{ball}. 
\begin{Rem}
In the Euclidean setting, it is known that equality holds in the left inclusion of \eqref{ball} for any $r>0$ and $\varepsilon \in (0, 1)$ (see \cite[Fact 1]{F}).  
However, we can easily observe that it does not always hold in general spaces. 
For instance, when $\M$ is the 1-dimensional unit sphere $\mathbb{S}^1$ with the canonical metric, 
we obtain that for any $r>0$ and $\varepsilon \in (0, 1)$, 
\begin{align*}
B(r)_\varepsilon = 
\begin{cases}
	B(\frac{1+\varepsilon}{1-\varepsilon}r) & \text{if $0<r<\frac{\pi}{2}(1-\varepsilon)$}, \\
	B(r+\varepsilon \pi) & \text{if $ r \geq \frac{\pi}{2}(1-\varepsilon)$}.
\end{cases}
\end{align*}
\end{Rem}
The following corollary is the reverse H\"{o}lder inequality for the distance function.
\begin{Cor}
Let $(\M, g, \m)$ be a geodesically-convex $n$-dimensional weighted Riemannian manifold satisfying $\m(M)=1$ and $\Ric_{\infty} \geq 0$ 
and fix $x_0 \in \M$. 
Then for any $0<p\leq q<\infty$, we have 
\[
\left( \int_\M d_g(x, x_0)^q ~d\m(x)\right)^{1/q} \leq \left(\frac{q}{p}\right)^2\left(\int_\M d_g(x, x_0)^p~d\m(x)\right)^{1/p}.
\]
\end{Cor}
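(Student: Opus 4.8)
The plan is to obtain this as an immediate consequence of the Kahane--Khintchine type inequality of Corollary~\ref{cor: reverse}, applied to the distance function $f(x) := d_g(x, x_0)$. What needs to be checked is only that $f$ admits a measured Remez function $u_{|f|}$ and that its right derivative at $1$ satisfies $u_{|f|}'(1) \le 2$; granting this, Corollary~\ref{cor: reverse} delivers the asserted bound with exponent $u_{|f|}'(1) \le 2$.

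First I would read off the Remez data of $f$ from the ball inclusion \eqref{ball}, i.e. $B(r)_\varepsilon \subset B\big(\tfrac{1+\varepsilon}{1-\varepsilon}r\big)$ for all $r > 0$ and $\varepsilon \in (0,1)$, which has just been established from the triangle inequality along minimizing geodesics. Setting $s := 1/(1-\varepsilon)$, this is exactly the statement $\{f \le r\}_{1-1/s} \subset \{f \le (2s-1)r\}$ for every $r > 0$, so $f$ has a Remez function in the sense of Remark~\ref{rem: rem3} with $\bar u_f(s) \le 2s - 1$. Since $u_{|f|} \le \bar u_f$ on $(1,\infty)$ by Remark~\ref{rem: rem3}, and $u_{|f|}$ is non-decreasing with $u_{|f|}(1) = 1$, the squeeze $1 \le u_{|f|}(s) \le 2s - 1$ shows that $u_{|f|}$ is finite on $[1,\infty)$ and continuous at $s = 1$; hence $f = d_g(\cdot, x_0)$ does have a measured Remez function, and moreover
\[
u_{|f|}'(1) = \limsup_{t \to 1+0} \frac{u_{|f|}(t) - 1}{t-1} \le \limsup_{t \to 1+0}\frac{(2t-1)-1}{t-1} = 2 .
\]

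Finally I would feed this into Corollary~\ref{cor: reverse}. We may assume $\int_\M d_g(x,x_0)^p \, d\m(x) < \infty$, since otherwise the power-mean inequality for the probability measure $\m$ forces the left-hand side to be infinite as well and there is nothing to prove; and since $t \mapsto t^p$ is concave when $p \le 1$, this finiteness also gives $f \in L^1(\m)$, so that Corollary~\ref{cor: reverse} applies. It yields
\[
\Big( \int_\M d_g(x,x_0)^q \, d\m(x)\Big)^{1/q} \le \Big(\frac{q}{p}\Big)^{u_{|f|}'(1)} \Big(\int_\M d_g(x,x_0)^p \, d\m(x)\Big)^{1/p} \le \Big(\frac{q}{p}\Big)^{2}\Big(\int_\M d_g(x,x_0)^p \, d\m(x)\Big)^{1/p},
\]
the last inequality using $q/p \ge 1$ together with $u_{|f|}'(1) \le 2$ and the monotonicity of $t \mapsto t^\alpha$ on $[1,\infty)$. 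The substantive content — the needle decomposition, the sharp dilation comparison \eqref{b}, and the entropy estimate of Theorem~\ref{thm: entropy} — is already absorbed into Corollary~\ref{cor: reverse}, so the only mild obstacle here is the bookkeeping that verifies the distance function has sufficiently regular Remez data (finiteness and continuity at $s = 1$) to legitimately invoke that corollary, which is precisely what \eqref{ball} provides.
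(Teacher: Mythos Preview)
Your approach is exactly the paper's: use \eqref{ball} to get $\bar u_f(s)\le 2s-1$, deduce $u_{|f|}(s)\le 2s-1$ via Remark~\ref{rem: rem3}, hence $u_{|f|}'(1)\le 2$, and invoke Corollary~\ref{cor: reverse}. The paper's proof is two sentences saying precisely this; your extra care in checking continuity at $s=1$ is fine and fills in detail the paper omits.

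One small slip in your side remark on integrability: for $0<p\le 1$ the concavity of $t\mapsto t^p$ gives, by Jensen on the probability space $(\M,\m)$, the inequality $\int f^p\,d\m\le\big(\int f\,d\m\big)^p$, i.e.\ $L^1\subset L^p$, not the other way around. So $\int f^p\,d\m<\infty$ with $p<1$ does \emph{not} force $f\in L^1(\m)$. This does not actually damage the argument---the paper ignores the point entirely, and the proof of Corollary~\ref{cor: reverse} really only uses finiteness of the $r$-th moments for $r$ in the relevant range, which one can handle by the usual continuity/truncation device---but the justification you wrote for $p\le 1$ is incorrect as stated.
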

\begin{proof}
By the above discussion and Remark \ref{rem: rem3}, we obtain $u_{d(\cdot, x_0)}(s) \leq 2s-1$ for every $s\geq 1$. 
Thus, our assertion follows from Corollary \ref{cor: reverse}.
\end{proof}
\subsection{The case $n \leq N <\infty$ and $-\infty<N<-1$}

In this final subsection, we discuss similar inequalities to Theorem \ref{thm: entropy} for more general $N \in (-\infty, -1)\cup[n, \infty)$.
For this purpose, we need to introduce an appropriate relative entropy. Although in general, it is natural to consider the R\'{e}nyi entropy for $N \in (-\infty, -1) \cup[n, \infty)$ in the context of geometric analysis, here we use other entropy (for instance, see Simon \cite[Chapter 16]{S}). 
Let $(\M, g, \m)$ be a geodesically-convex $n$-dimensional weighted Riemannian manifold with $\m(\M)=1$. 
For a probability measure $\nu=\rho \m$ on $\M$, where $\rho$ is a nonnegative Borel function on $\M$ with $\rho^{(1+N)/N} \in L^1(\m)$, we define $U_N(\nu)$ by
\[
U_N(\nu) =U_N(\rho):= N\int_{\M} (\rho^{1/N}-1)\rho ~d\m = N\int_{\M} \rho^{(1+N)/N} ~d\m -N
\]
for every $N \in (-\infty, -1)\cup [n, \infty)$. In this paper, we call the above entropy the {\it $N$-entropy}. Note that the function $(0, \infty) \ni x \mapsto N(x^{1/N}-1)x$ is convex, and hence Jensen's inequality yields
that $U_N(\nu) \geq 0$. 
Moreover, for $-\infty<N'<-1$ and $n \leq N< \infty$, the $N$-entropy enjoys that 
\[
0\leq U_{N'}(\nu) \leq \Ent_{\m}(\nu) \leq U_N(\nu), 
\]
$U_N(\nu)\to\Ent_{\m}(\nu)$ as $N \to \infty$ (similarly, $U_{N'}(\nu)\to\Ent_{\m}(\nu)$ as $N' \to -\infty$) and $U_{N'}(\nu)\to 0$ as $N' \to -1$. 
The following theorem corresponds to Theorem \ref{thm: entropy}.  
\begin{Thm}\label{thm: general entropy}
Let $(\M, g, \m)$ be a geodesically-convex $n$-dimensional weighted Riemannian manifold with $\m(\M)=1$, $\Ric_N\geq 0$ for some $N \in (-\infty, -1)\cup [n, \infty)$. 
Then for any probability measure $\nu$ on $\M$ whose density with respect to $\m$ is $\rho$, we have
\[
U_N(\nu) \leq u_{\rho}'(1).
\]
\end{Thm}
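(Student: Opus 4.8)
The plan is to imitate the proof of Theorem \ref{thm: entropy}, with the dual formula of the relative entropy replaced by the convexity of the $N$-entropy functional. Throughout I write $A_\rho(t):=\{x\in\M~|~\rho(x)>t\}$ and $a(t):=\m(A_\rho(t))$ for $t\geq0$, so that $a$ is non-increasing and, by the layer-cake identity and Tonelli's theorem, $\int_0^\infty a(t)~dt=\int_\M\rho~d\m=1$. We may assume that $\rho$ possesses the measured Remez function $u_\rho$, since otherwise $u_\rho(t)=\infty$ for $t>1$, whence $u_\rho'(1)=\infty$ and there is nothing to prove.

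First I would bound $\int_0^\infty\m^*(\M\setminus A_\rho(t))~dt$ from above by applying Proposition \ref{coarea type formula} with $f=\rho$, which gives
\[
\int_0^\infty\m^*(\M\setminus A_\rho(t))~dt\leq u_\rho'(1)\int_\M\rho~d\m=u_\rho'(1).
\]
Next, since $\Ric_N\geq0$ and $N\in(-\infty,-1)\cup[n,\infty)\subset(-\infty,0)\cup[n,\infty]$, the comparison \eqref{b} of dilation profiles together with the very definition of $\D_{(\M,g,\m)}$ yields, for each $t$,
\[
\m^*(\M\setminus A_\rho(t))\geq\D_{(\M,g,\m)}\bigl(1-a(t)\bigr)\geq -N\bigl(a(t)-a(t)^{1-1/N}\bigr).
\]
Combining the two displays gives $\int_0^\infty(-N)\bigl(a(t)-a(t)^{1-1/N}\bigr)~dt\leq u_\rho'(1)$.

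It remains to recognize the left-hand side above as an upper bound for $U_N(\nu)$, and this is where convexity enters. For $t$ with $a(t)>0$, let $\rho_t:=a(t)^{-1}\mathbf{1}_{A_\rho(t)}$, which is a probability density with respect to $\m$; the layer-cake identity $\rho=\int_0^\infty\mathbf{1}_{A_\rho(t)}~dt=\int_0^\infty a(t)\rho_t~dt$ exhibits $\rho$ as a continuous convex combination of the $\rho_t$ against the probability measure $dP(t):=a(t)~dt$ on $\{t>0~|~a(t)>0\}$. Recall that for a probability measure $\mu$ with density $h$ with respect to $\m$ one has $U_N(\mu)=\int_\M\psi_N(h)~d\m$, where $\psi_N(x):=N(x^{1/N}-1)x$ is convex on $[0,\infty)$ with $\psi_N(0)=0$ (its convexity on $(0,\infty)$ being recalled just before the statement), and observe that $\psi_N\geq0$ on $\{0\}\cup[1,\infty)$, a set containing the range of every $\rho_t$. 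Hence Jensen's inequality for $\psi_N$ and $P$ applied pointwise, followed by Tonelli's theorem (legitimate because $\psi_N(\rho_t)\geq0$), gives
\[
U_N(\nu)=\int_\M\psi_N\Bigl(\int_0^\infty a(t)\rho_t~dt\Bigr)~d\m\leq\int_0^\infty\Bigl(\int_\M\psi_N(\rho_t)~d\m\Bigr)a(t)~dt=\int_0^\infty a(t)\,U_N(\rho_t\m)~dt.
\]
A direct computation gives $U_N(\rho_t\m)=N\int_\M\rho_t^{(1+N)/N}~d\m-N=N\bigl(a(t)^{-1/N}-1\bigr)$, so that $a(t)\,U_N(\rho_t\m)=-N\bigl(a(t)-a(t)^{1-1/N}\bigr)$. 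Feeding this into the previous step yields $U_N(\nu)\leq u_\rho'(1)$, which is the assertion (and when $U_N(\nu)=\infty$ the same chain of inequalities simply forces $u_\rho'(1)=\infty$).

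The step demanding the most care is the convexity step: one must verify that Jensen's inequality legitimately applies to the continuous convex combination $\rho=\int_0^\infty a(t)\rho_t~dt$, that is, that $t\mapsto\rho_t$ and $t\mapsto a(t)$ are measurable (immediate, as $\rho$ is Borel and $a$ is monotone) and that the Fubini--Tonelli exchange is permissible; the latter is exactly where the non-negativity of $\psi_N$ on the range of the $\rho_t$ is used, and it is what makes the argument go through without any a priori integrability hypothesis on $\rho$. Apart from this the proof is merely the assembly of \eqref{b}, Proposition \ref{coarea type formula} and the elementary identity $U_N(a^{-1}\mathbf{1}_A\,\m)=N(\m(A)^{-1/N}-1)$; here the convexity of $\psi_N$ plays the role that the dual formula of the relative entropy plays in the proof of Theorem \ref{thm: entropy}. (Alternatively, one could invoke the Legendre-duality representation of $U_N$ together with the inequality $\int\sup\geq\sup\int$, mirroring the proof of Theorem \ref{thm: entropy} more literally; the same mechanism in fact recovers the case $N=\infty$.)
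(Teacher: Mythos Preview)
Your proof is correct and follows the same overall architecture as the paper's---Proposition~\ref{coarea type formula} plus the comparison \eqref{b}, then the identification of $-N\int_0^\infty\bigl(a(t)-a(t)^{1-1/N}\bigr)\,dt$ with $\int_0^\infty a(t)\,U_N(\rho_t\m)\,dt$---but the final step is genuinely different. The paper establishes $U_N(\nu)\leq\int_0^\infty a(t)\,U_N(\rho_t\m)\,dt$ by first proving a separate dual formula (Theorem~\ref{Simon}) for $U_N$ and then arguing via $\int_0^\infty\sup_g[\cdots]\,dt\geq\sup_g\int_0^\infty[\cdots]\,dt$, exactly mirroring the relative-entropy case. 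You instead obtain the same inequality directly from Jensen applied pointwise to the convex function $\psi_N(x)=N(x^{1/N}-1)x$ and the mixture $\rho=\int_0^\infty\rho_t\,a(t)\,dt$, followed by Tonelli (legitimate because $\psi_N\geq0$ on $\{0\}\cup[1,\infty)$, which contains the range of each $\rho_t$). Your route is more elementary in that it bypasses Theorem~\ref{Simon} entirely; the paper's route has the virtue of treating $N=\infty$ and finite $N$ by a single template, at the cost of that auxiliary duality theorem. You correctly flag the Fubini/Tonelli step as the place requiring care, and your parenthetical remark that the Legendre-duality approach would work ``more literally'' is exactly what the paper does.
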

In order to prove Theorem \ref{thm: general entropy}, we introduce the dual formula of the $N$-entropy that we postpone proving to the end of this subsection. 
\begin{Thm}\label{Simon}
Let $(\M, g, \m)$ be a geodesically-convex $n$-dimensional weighted Riemannian manifold with $\m(\M)=1$. 
Then for any probability measure $\nu$ on $\M$ and $N \in (-\infty, -1) \cup [n, \infty)$, it holds that 
\begin{align}\label{thm: dual formula}
U_N(\nu) = \sup_{g \in \E_N(\M)} \left[(1+N)\int_\M g ~d\nu - \int_\M g^{1+N} ~d\m\right] -N,
\end{align}
where $\E_N(\M)$ is the set of all nonnegative measurable functions $g$ with $g^{1+N} \in L^1(\m)$ when $N \in [n, \infty)$, and the set of all continuous functions $g$ with $\inf_{x \in \M} g(x)>0$ when $N \in (-\infty, -1)$. 
\end{Thm}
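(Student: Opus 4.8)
The plan is to read \eqref{thm: dual formula} as the Legendre--Fenchel duality attached to the convex function $x\mapsto N(x^{1/N}-1)x$ on $[0,\infty)$, and to prove it by a sharp pointwise (Young-type) inequality together with an approximation scheme producing near-optimal test functions inside the admissible class $\E_N(\M)$.

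The first step is the pointwise inequality: for all $a,b\ge 0$,
\[
(1+N)ab-b^{1+N}\le N\,a^{(1+N)/N},
\]
with equality exactly at $b=a^{1/N}$ when $a>0$. For fixed $a>0$ the function $b\mapsto (1+N)ab-b^{1+N}$ has a unique critical point at $b=a^{1/N}$, and since $(1+N)N>0$ in both regimes $N\in[n,\infty)$ and $N\in(-\infty,-1)$, its second derivative $-(1+N)N\,b^{N-1}$ is negative, so the critical point is the global maximum (when $N<-1$ the function tends to $-\infty$ as $b\to 0^+$ and $b\to\infty$; when $N\ge n$ it is a concave function of $b$ on $[0,\infty)$); the value at the maximum is $N a^{(1+N)/N}$ by a direct substitution, and the case $a=0$ is immediate because $(1+N)/N>0$. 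Integrating this inequality against $\m$ with $a=\rho(x)$ and $b=g(x)$, and noting that $g^{1+N}\in L^1(\m)$ so that the left-hand integrand is $\m$-integrable (any integral forced to $-\infty$ only strengthens the bound), gives
\[
(1+N)\int_\M g\,d\nu-\int_\M g^{1+N}\,d\m\le N\int_\M\rho^{(1+N)/N}\,d\m=U_N(\nu)+N
\]
for every $g\in\E_N(\M)$; taking the supremum yields the inequality ``$\ge$'' in \eqref{thm: dual formula}.

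For the reverse inequality I would produce (nearly) optimal $g$. When $N\in[n,\infty)$ the choice $g:=\rho^{1/N}$ already lies in $\E_N(\M)$, because $g^{1+N}=\rho^{(1+N)/N}\in L^1(\m)$ is precisely the standing integrability hypothesis on $\rho$; substituting it gives equality in the inequality above, hence ``$\le$''. When $N\in(-\infty,-1)$ the function $\rho^{1/N}$ need not be continuous nor bounded below by a positive constant (it equals $+\infty$ on $\{\rho=0\}$), so a two-step approximation is required. First I truncate: with $\rho_k:=\min\{\max\{\rho,1/k\},k\}$ the function $g_k:=\rho_k^{1/N}$ satisfies $0<k^{1/N}\le g_k\le k^{-1/N}<\infty$, and using the integrable dominants $\rho+\rho^{(1+N)/N}$ and $\rho^{(1+N)/N}+1$ (note $\rho^{(1+N)/N}\in L^1(\m)$ is automatic for $N<-1$ by Jensen, as $(1+N)/N\in(0,1)$), dominated convergence gives
\[
(1+N)\int_\M g_k\,\rho\,d\m-\int_\M g_k^{1+N}\,d\m\;\longrightarrow\;N\int_\M\rho^{(1+N)/N}\,d\m=U_N(\nu)+N.
\]
Second, for each fixed $k$ I approximate the bounded Borel function $g_k$ by a continuous function taking values in $[k^{1/N},k^{-1/N}]$: since $\m$ and $\nu$ are Radon measures, $C_b(\M)$ is dense in $L^1(\m+\nu)$, truncation to $[k^{1/N},k^{-1/N}]$ keeps the function continuous without increasing its $L^1(\m+\nu)$-distance to $g_k$, and on this compact range $x\mapsto x^{1+N}$ is Lipschitz, so the functional $h\mapsto(1+N)\int_\M h\,d\nu-\int_\M h^{1+N}\,d\m$ is continuous along the approximation. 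Chaining the two steps, for every $\varepsilon>0$ there is $h\in\E_N(\M)$ with $(1+N)\int_\M h\,d\nu-\int_\M h^{1+N}\,d\m\ge U_N(\nu)+N-\varepsilon$, giving ``$\le$'' and completing the proof.

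The main obstacle is precisely this approximation in the range $N\in(-\infty,-1)$: the admissible class is restricted to continuous functions bounded away from zero, while the true maximizer $\rho^{1/N}$ is only Borel and blows up on $\{\rho=0\}$, so one must interleave the truncation of $\rho$ with a density/continuity argument and check, through the explicit dominants above, that no mass of the functional is lost in either passage to the limit. The case $N\ge n$ is, by contrast, immediate once the pointwise inequality is available; and if $\rho^{(1+N)/N}\notin L^1(\m)$ (possible only for $N\ge n$) both sides are $+\infty$, consistently with the convention defining $U_N$.
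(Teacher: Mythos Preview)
Your proof is correct and follows essentially the same route as the paper's: a pointwise Young-type inequality (the paper splits it into the classical Young inequality for $N\ge n$ and its reverse, obtained via Legendre transform, for $N<-1$) gives one direction, and the optimal choice $g=\rho^{1/N}$---implemented directly for $N\ge n$ and via a truncation-then-continuous-approximation scheme for $N<-1$---gives the other. The only minor difference is that the paper passes to the limit in the truncation step by monotone convergence (observing that $g_k\rho - g_k^{1+N}/(1+N)$ is non-increasing in $k$), whereas you use dominated convergence with explicit dominants; both are valid.
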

%

Now, we shall prove Theorem \ref{thm: general entropy}. 
\begin{proof}[Proof of Theorem \ref{thm: general entropy}]
The idea of the proof is same as Theorem \ref{thm: entropy}. By \eqref{b} and Proposition \ref{coarea type formula}, 
it suffices to prove 
\[
-N \int_0^\infty (\m(A_\rho(t)) - \m(A_\rho(t))^{1-1/N}) ~dt \geq U_N(\nu). 
\]
In general for a Borel subset $A \subset \M$ with $\m(A)>0$, we have 
\[
U_N\left(\frac{1}{\m(A)}\mathbf{1}_A\right) = N\m(A)^{-1/N} -N. 
\]
Thus, we see that by Theorem \ref{Simon}, 
\begin{align*}
&-N \int_0^\infty (\m(A_\rho(t)) - \m(A_\rho(t))^{1-1/N}) ~dt \\
&= \int_0^\infty \m(A_\rho(t))U_N\left(\frac{1}{\m(A_\rho(t))}\mathbf{1}_{A_\rho(t)}\right) ~dt\\
&= \int_0^\infty \sup_{g \in \E_N(\M)}\left[ (1+N)\int_{\M} g \mathbf{1}_{A_\rho(t)} ~d\m - \m(A_\rho(t))\int_{\M} g^{1+N}~d\m -N \m(A_\rho(t)) \right] ~dt \\
&\geq \sup_{g \in \E_N(\M)}\left[ (1+N)\int_{\M}\int_0^\infty g \mathbf{1}_{A_\rho(t)} ~dtd\m - \int_0^\infty\m(A_\rho(t)) ~dt \int_{\M} g^{1+N}~d\m -N\int_0^\infty\m(A_\rho(t)) ~dt \right] \\
&=  \sup_{g \in \E_N(\M)}\left[ (1+N)\int_{\M} g ~d\nu - \int_{\M} g^{1+N}~d\m -N \right]\\
&=U_N(\nu), 
\end{align*}
where we used $\int_0^\infty \mathbf{1}_{A_\rho(t)}(x) ~dt=\rho(x)$ for every $x \in \M$ and $\int_0^\infty \m(A_\rho(t))~dt = \int_\M \rho~d\m =1$. 
This completes the proof. 
\end{proof} 
\begin{proof}[Proof of Theorem \ref{Simon}]
Let $\rho$ be the density of $\nu$ with respect to $\m$ satisfying $\rho^{(1+N)/N} \in L^1(\m)$. 
First, let $N \in [n, \infty)$. 
The Young inequality implies that  
\[
xy \leq \frac{N}{1+N}x^{(1+N)/N} + \frac{1}{1+N}y^{1+N}, \quad x\geq 0, \quad y\geq 0.
\]
Thus, for any measurable function $g:\M \to \R_+$ with $g^{1+N} \in L^1(\m)$, we have   
\[
\rho g \leq \frac{N}{1+N}\rho^{(1+N)/N} + \frac{1}{1+N}g^{1+N},
\]
which yields 
\[
\frac{N}{1+N}\int_\M \rho^{(1+N)/N} ~d\m \geq \sup_{g \in \E_N(\M)} \left[\int_\M g ~d\nu - \frac{1}{1+N}\int_\M g^{1+N} ~d\m\right]. 
\]
On the other hand, letting $g=\rho^{1/N}$ (which implies $g^{1+N} = \rho^{(1+N)/N} \in L^1(\m)$) yields equality in the above inequality. 
Consequently, we obtain 
\[
\frac{N}{1+N}\int_\M \rho^{(1+N)/N} ~d\m = \sup_{g \in \E_N(\M)} \left[\int_\M g ~d\nu - \frac{1}{1+N}\int_\M g^{1+N} ~d\m\right], 
\]
and hence 
\[
U_N(\nu) = \sup_{g \in \E_N(\M)} \left[(1+N)\int_\M g ~d\nu - \int_\M g^{1+N} ~d\m\right] -N.
\]
\\
\indent
Second, let $N \in (-\infty, -1)$. 
Note that 
\[
\varphi_N(x) :=
\begin{cases}
 -\frac{N}{1+N}x^{(1+N)/N} & \text{if $x \geq0$}, \\
 \infty & \text{if $x <0$}
\end{cases}
\]
is convex and lower semi-continuous on $\R$. 
Its Legendre transform $\varphi_N^*(y) :=\sup_{x \in \R} [xy- \varphi_N(x)]$ has the explicit form
\[
\varphi_N^*(y) =
\begin{cases}
-\frac{1}{1+N} (-y)^{1+N} & \text{if $y< 0$}, \\
\infty & \text{if $ y\geq0$}.
\end{cases}
\]
Therefore, we obtain the reverse Young type inequality 
\[
xy \geq \frac{N}{1+N}x^{(1+N)/N} + \frac{1}{1+N}y^{1+N}, \quad x\geq 0, \quad y>0.
\]
Thus, for any continuous function $g:\M \to \R$ with $\inf_{x \in \M} g(x)>0$, we have 
\[
\rho g \geq \frac{N}{1+N}\rho^{(1+N)/N} + \frac{1}{1+N}g^{1+N},
\]
which yields 
\[
\frac{N}{1+N}\int_\M \rho^{(1+N)/N} ~d\m \leq \inf_{g \in \E_N(\M)} \left[\int_\M g ~d\nu - \frac{1}{1+N}\int_\M g^{1+N} ~d\m\right]. 
\]
On the other hand, if $\rho$ is continuous and satisfies $\inf_{x \in \M}\rho^{1/N}(x)>0$ (namely, $\sup_{x \in \M}\rho(x)<M$ for some $M>0$), then letting $g=\rho^{1/N}$ yields equality in the above inequality.

In general, we use an approximation argument. 
For $g: \M \to \R_+$, we set 
\[
S(g) := \int_\M g ~d\nu - \frac{1}{1+N}\int_\M g^{1+N} ~d\m. 
\]
Let $\overline{\E}(\M)$ be the set of all Borel measurable functions $h$ satisfying $\delta\leq h\leq\delta^{-1}$ for some $\delta \in (0, 1)$. 
For such $h$, we can take a sequence of continuous functions $\{h_k\}_{k=1}^\infty$ such that $h_k \to h$ as $k\to \infty$ $d\m$-a.e. in the pointwise sense. 
By replacing $h_k$ by $\min\{\delta^{-1}, \max\{h_k, \delta\}\}$, we may assume that $h_k$ satisfies $\delta\leq h_k\leq\delta^{-1}$. 
In particular, $h_k$ belongs to $\E_N(\M)$ for all $k$. By the dominated convergence theorem, we have $S(h_k) \to S(h)$ as $k \to \infty$. 
Thus, we obtain 
\[
\inf_{g \in \E_N(\M)} \left[\int_\M g ~d\nu - \frac{1}{1+N}\int_\M g^{1+N} ~d\m\right] \leq \inf_{g \in \overline{\E}(\M)} \left[\int_\M g ~d\nu - \frac{1}{1+N}\int_\M g^{1+N} ~d\m\right].
\]
Therefore, for proving \eqref{thm: dual formula}, it suffices to find a sequence $\{g_k\}_{k=1}^\infty$ in $\overline{\E}(\M)$ such that 
\begin{align}\label{converge}
S(g_k) \rightarrow \frac{N}{1+N}\int_\M \rho^{(1+N)/N} ~d\m \quad \text{as $k \rightarrow \infty$}.
\end{align}
Now, we define $g_k: \M \to \R$ for every $k \in \mathbb{N}$ by 
\[
g_k(x):=
\begin{cases}
k^{-1} & \text{if $\rho^{1/N}(x) \leq k^{-1}$}, \\
\rho^{1/N}(x) & \text{if $k^{-1}\leq \rho^{1/N}(x) \leq k$}, \\
k & \text{if $\rho^{1/N}(x) \geq k$}.
\end{cases}
\]
Obviously, for fixed $x \in \M$, $g_k(x)$ is non-decreasing in $k$ if $\rho^{1/N}(x)\geq 1$ and non-increasing if $\rho^{1/N}(x) \leq 1$. 
We can also see that for fixed $y>0$, the function $\R_+ \ni z \mapsto zy-z^{1+N}/(1+N) \in \R$ is increasing on $[y^{1/N}, 1]$ if $y^{1/N}<1$ and decreasing on $[1, y^{1/N}]$ if $y^{1/N}>1$. 
Therefore, it follows that for every $x \in \M$, $g_k(x)\rho(x) - g_k^{1+N}(x)/(1+N)$ is non-increasing in $k$ and converges to $N\rho^{(1+N)/N}/(1+N)$. 
Hence, by the monotone convergence theorem, we obtain \eqref{converge} for $\{g_k\}_{k=1}^\infty$ defined above.
Consequently, we have proved 
\[
\frac{N}{1+N}\int_\M \rho^{(1+N)/N} ~d\m = \inf_{g \in \E_N(\M)} \left[\int_\M g ~d\nu - \frac{1}{1+N}\int_\M g^{1+N} ~d\m\right], 
\]
and since $1+N <0$, it yields 
\[
U_N(\nu) = \sup_{g \in \E_N(\M)} \left[(1+N)\int_\M g ~d\nu - \int_\M g^{1+N} ~d\m\right] -N.
\]
\end{proof}


\begin{thebibliography}{99}
	\bibitem{B99}S. G. Bobkov, Isoperimetric and analytic inequalities for log-concave probability measures, Ann. Probab. 27 (1999), no. 4, 1903-1921.
	\bibitem{B1}S. G. Bobkov, Large deviations via transference plans, Advances in mathematics research, Vol. 2, 151-175, Adv. Math. Res., {\bf 2}, Nova Sci. Publ., Hauppauge, NY, 2003.
	\bibitem{B2}S. G. Bobkov, Large deviations and isoperimetry over convex probability measures with heavy tails, Electron. J. Probab. {\bf 12} (2007), 1072-1100.
	\bibitem{BGL}D. Bakry, I. Gentil and M. Ledoux, Analysis and geometry of Markov diffusion operators, Grundlehren der Mathematischen Wissenschaften, 348, Springer, Cham, 2014.
	\bibitem{BH}S. G. Bobkov and C. Houdr\'{e}, Isoperimetric constants for product probability measures, Ann. Probab. {\bf 25} (1997), no. 1, 184-205.
	\bibitem{BN}S. G. Bobkov and F. Nazarov, Sharp dilation-type inequalities with fixed parameter of convexity, Zap. Nauchn. Sem. S.-Peterburg. Otdel. Mat. Inst. Steklov. (POMI) 351 (2007), Veroyatnost'i Statistika. {\bf 12}, 54-78, 299; reprinted in J. Math. Sci. (N.Y.) {\bf 152} (2008), no. 6, 826-839.
	\bibitem{Bor-1}C. Borell, Convex measures on locally convex spaces, Ark. Mat. {\bf 12} (1974), 239-252.
	\bibitem{Bor}C. Borell, Convex set functions in $d$-space, Period. Math. Hungar. {\bf 6} (1975), no. 2, 111-136.
	\bibitem{Bou}J. Bourgain, On the distribution of polynomials on high-dimensional convex sets, Geometric aspects of functional analysis (1989-90), 127-137, Lecture Notes in Math., 1469, Springer, Berlin, 1991.
	\bibitem{BGVV}S. Brazitikos, A. Giannopoulos, P. Valettas and B.-H. Vritsiou, Geometry of isotropic convex bodies, Mathematical Surveys and Monographs, 196. American Mathematical Society, Providence, RI, 2014.
	\bibitem{CM}F. Cavalletti and A. Mondino, Sharp and rigid isoperimetric inequalities in metric-measure spaces with lower Ricci curvature bounds, Invent. Math. {\bf 208} (2017), no. 3, 803-849.
	\bibitem{EKS}M. Erbar, K. Kuwada and K.-T. Sturm, On the equivalence of the entropic curvature-dimension condition and Bochner's inequality on metric measure spaces, Invent. Math. {\bf 201} (2015), no. 3, 993-1071.
	\bibitem{F}M. Fradelizi, Concentration inequalities for s-concave measures of dilations of Borel sets and applications, Electron. J. Probab. {\bf 14} (2009), no. 71, 2068-2090.
	\bibitem{FG1}M. Fradelizi and O. Guédon, The extreme points of subsets of s-concave probabilities and a geometric localization theorem, Discrete Comput. Geom. {\bf 31} (2004), no. 2, 327-335.
	\bibitem{FG2}M. Fradelizi and O. Guédon, A generalized localization theorem and geometric inequalities for convex bodies, Adv. Math. {\bf 204} (2006), no. 2, 509-529.
	\bibitem{G}O. Gu\'{e}don, Kahane-Khinchine type inequalities for negative exponent, Mathematika {\bf 46} (1999), no. 1, 165-173.
	\bibitem{K}B. Klartag, Needle decompositions in Riemannian geometry, Mem. Amer. Math. Soc. {\bf 249} (2017), no. 1180.
	\bibitem{L}R. Lata\l a, On the equivalence between geometric and arithmetic means for log-concave measures, Convex geometric analysis (Berkeley, CA, 1996), 123-127, Math. Sci. Res. Inst. Publ., {\bf 34}, Cambridge Univ. Press, Cambridge, 1999.
	\bibitem{LS}L. Lov\'{a}sz and M. Simonovits, Random walks in a convex body and an improved volume algorithm, Random Structures Algorithms {\bf 4} (1993), no. 4, 359-412.
	\bibitem{Ma}C. H.  Mai, Rigidity for the isoperimetric inequality of negative effective dimension on weighted Riemannian manifolds, Geom. Dedicata {\bf 202} (2019), 213-232.
 \bibitem{MO}C. H. Mai and S. Ohta, Quantitative estimates for the Bakry-Ledoux isoperimetric inequality, to appear in Comment. Math. Helv., available at arXiv: 1910.13686.
	\bibitem{Mi09}E. Milman, On the role of convexity in isoperimetry, spectral gap and concentration, Invent. Math. {\bf 177} (2009), no. 1, 1-43.
	\bibitem{Mi1}E. Milman, Sharp isoperimetric inequalities and model spaces for the curvature-dimension-diameter condition, J. Eur. Math. Soc. (JEMS) {\bf 17} (2015), no. 5, 1041-1078. 
	\bibitem{Mi2}E. Milman, Beyond traditional curvature-dimension I: new model spaces for isoperimetric and concentration inequalities in negative dimension, Trans. Amer. Math. Soc. {\bf 369} (2017), no. 5, 3605-3637.
	\bibitem{NSV}F. Nazarov, M. Sodin and A. Vol'berg, The geometric Kannan-Lov\'{a}sz-Simonovits lemma, dimension-free estimates for the distribution of the values of polynomials, and the distribution of the zeros of random analytic functions, Algebra i Analiz 14 (2002), no. 2, 214-234; translation in St. Petersburg Math. J. {\bf 14} (2003), no. 2, 351-366.
	\bibitem{Oh1}S. Ohta, $(K,N)$-convexity and the curvature-dimension condition for negative $N$, J. Geom. Anal. {\bf 26} (2016), no. 3, 2067-2096.
	\bibitem{Oh2}S. Ohta, Needle decompositions and isoperimetric inequalities in Finsler geometry, J. Math. Soc. Japan {\bf 70} (2018), no. 2, 651-693.
 \bibitem{OT}S. Ohta and A. Takatsu, Equality in the logarithmic Sobolev inequality, Manuscripta Math. {\bf 162} (2020), no. 1-2, 271-282.
 \bibitem{vRS}M.-K. von Renesse and K.-T. Sturm, Transport inequalities, gradient estimates, entropy, and Ricci curvature, Comm. Pure Appl. Math. {\bf 58} (2005), no. 7, 923-940.
 \bibitem{S}B. Simon, Convexity. An analytic viewpoint, Cambridge Tracts in Mathematics, 187. Cambridge University Press, Cambridge, 2011. 
 \bibitem{St}K.-T. Sturm, On the geometry of metric measure spaces. II, Acta Math. {\bf 196} (2006), no. 1, 133–177.
 \bibitem{V}C. Villani, Optimal transport, old and new, Springer-Verlag, Berlin, 2009.
\end{thebibliography}
\end{document}